\newtheorem{example}[therm]{Example}
\newenvironment{eqalign*}{\begin{equation*}\begin{aligned}}{\end{aligned}\end{equation*}}
\newenvironment{diagram*}{\begin{equation*}\begin{tikzcd}}{\end{tikzcd}\end{equation*}}
\tikzset{
  relation/.style={
    draw=none,
    every to/.append style={
      edge node={node [sloped, allow upside down, auto=false]{$#1$}}}
  }
}
\newcommand{\Overset}[2]{%
  \mathop{#2}\limits^{\vbox to -.1ex{%
  \kern -1.8ex\hbox{$#1$}\vss}}%
}
\newcommand{\Underset}[2]{%
  \mathop{#2}\limits_{\vbox to .1ex{%
  \kern -.6ex\hbox{$#1$}\vss}}%
}
\mathchardef\dash="2D
\newcommand{\ot}{\leftarrow}
\newcommand{\from}{\ot}
\newcommand{\longto}{\longrightarrow}
\newcommand{\twoto}{\Rightarrow}
\newcommand{\narrow}[2]{\overset{#1}{#2}}
\newcommand{\nto}[1]{\narrow{#1}{\to}}
\newcommand{\nfrom}[1]{\narrow{#1}{\from}}
\newcommand{\nlongfrom}[1]{\xleftarrow{#1}}
\newcommand{\nlongto}[1]{\xrightarrow{#1}}
\newcommand{\opticto}{\leftrightarrows}
\newcommand{\chartto}{\rightrightarrows}
\newcommand{\R}{\mathbb{R}}
\newcommand{\biglens}[2]{
	 \begin{pmatrix}{\vphantom{f_f^f}#1} \\ {\vphantom{f_f^f}#2} \end{pmatrix}
}
\newcommand{\littlelens}[2]{
	 \begin{psmallmatrix}{\vphantom{f}#1} \\ {\vphantom{f}#2} \end{psmallmatrix}
}
\newcommand{\lens}[2]{
  \relax\if@display
	 \biglens{#1}{#2}
  \else
	 \littlelens{#1}{#2}
  \fi
}
\newcommand{\cat}[1]{
  \relax
  \StrLen{#1}[\catarglen]
  \ifnum\catarglen=1
    \mathcal{#1}
  \else
    \mathsf{#1}
  \fi
}
\newcommand{\dblcat}[1]{\cat{\mathbb #1}}
\newcommand{\cod}{\mathsf{cod}}
\newcommand{\dom}{\mathsf{dom}}
\newcommand{\identity}{\mathsf{Id}}
\newcommand{\id}{\identity}
\newcommand{\adj}{\dashv}
\newcommand{\Cat}{\dblcat{Cat}}
\newcommand{\Set}{\cat{Set}}
\newcommand{\CMon}{\cat{CMon}}
\newcommand{\Smooth}{\cat{Smooth}}
\newcommand{\Fib}{\dblcat{Fib}}
\DeclareFontFamily{U}{musix}{}%
\DeclareFontShape{U}{musix}{m}{n}{%
  <-12>   musix11
  <12-15> musix13
  <15-18> musix16
  <18-23> musix20
  <23->   musix29
}{}%
\newcommand*\musix{\usefont{U}{musix}{m}{n}\selectfont}
\DeclareTextFontCommand{\textmusix}{\musix}
\newcommand{\simple}{\mathsf{simp}}
\renewcommand{\lens}{\mathsf{lens}}
\newcommand{\dlens}{\mathsf{dLens}}
\newcommand{\bo}{\mathsf{bo}}
\newcommand{\ff}{\mathsf{ff}}
\newcommand{\Cart}{\dblcat{Cart}}
\newcommand{\BundleCat}{{\dblcat{Bun}\Cat}}
\newcommand{\CartMonCat}{{\Cart\Cat}}
\newcommand{\SemiaddCat}{{\dblcat{Smadd}\Cat}}
\newcommand{\CLA}{{\dblcat{CLA}}}
\newcommand{\CLACat}{{\CLA\Cat}}
\newcommand{\lin}{\mathsf{lin}}
\newcommand{\add}{\mathsf{add}}
\newcommand{\nadd}{{\cancel{\add}}}
\newcommand{\nlin}{{\cancel{\lin}}}
\newcommand{\Lin}{\cat{Lin}}
\newcommand{\Diff}{\cat{Diff}}
\newcommand{\fst}{{\mathsf{fst}}}
\newcommand{\<}{\langle}
\renewcommand{\>}{\rangle}
\newcommand{\FOTS}{\begin{tikzcd}[ampersand replacement=\&,cramped]
    {\cat B^{\downbundle+}} \& {\cat B}
    \arrow["\cod"{description}, from=1-1, to=1-2]
    \arrow["T"{description}, curve={height=12pt}, from=1-2, to=1-1]
\end{tikzcd}}
\newcommand{\bundle}{\twoheadrightarrow}
\newcommand{\downbundle}{{\rotatebox[origin=c]{-90}{$\bundle$}}}
\newcommand{\bigover}[3]{
	 \begin{pmatrix}{\vphantom{f_f^f}#1} \\ {\vphantom{f_f^f}#2} \\ {\vphantom{f_f^f}#3} \end{pmatrix}
}
\newcommand{\littleover}[3]{
	 \begin{psmallmatrix}{\vphantom{f}#1} \\ {\vphantom{f}#2} \\ {\vphantom{f}#3} \end{psmallmatrix}
}
\newcommand{\sndover}[3]{
  \relax\if@display
	 \bigover{#1}{#2}{#3}
  \else
	 \littleover{#1}{#2}{#3}
  \fi
}
\begin{document}



\jnlPage{1}{00}
\jnlDoiYr{2023}
\doival{10.1017/xxxxx}

\title{A Fibrational Theory of First Order Differential Structures}

\begin{authgrp}
    \author{Matteo Capucci}
        \affiliation{Strathclyde University, United Kingdom,\\
        \email{matteo.capucci@strath.ac.uk}}
    \author{Geoffrey S. H. Cruttwell}
        \affiliation{Mount Allison University, Canada\\
        \email{gcruttwell@mta.ca}}
    \author{Neil Ghani}
        \affiliation{Strathclyde University, United Kingdom\\
        \email{neil.ghani@strath.ac.uk}}
    \author{ Fabio Zanasi}
        \affiliation{University College London, United Kingdom, and University of Bologna, OLAS Team (INRIA), Italy.\\
        \email{f.zanasi@ucl.ac.uk}}
\end{authgrp}

\history{(Received xx xxx xxx; revised xx xxx xxx; accepted xx xxx xxx)}

\begin{abstract}
    We develop a categorical framework for reasoning about abstract properties of differentiation, based on the theory of fibrations. Our work encompasses the first-order fragments of several existing categorical structures for differentiation, including cartesian differential categories, generalised cartesian differential categories, tangent categories, as well as the versions of these categories axiomatising reverse derivatives. We explain uniformly and concisely the requirements expressed by these structures, using sections of suitable fibrations as unifying concept. Our perspective sheds light on their similarities and differences, as well as simplifying certain constructions from the literature.\end{abstract}

\begin{keywords}
Differentiation, Fibration, Differential Categories, Tangent Categories
\end{keywords}

\maketitle

\section{Introduction}
\label{sec:intro}






In the past few years, the widespread adoption of quantitative mathematical modelling, in areas such as probabilistic programming and machine learning, has determined a growing interest in the foundations of differential calculus. In particular, since the invention of differential categories \cite[]{blute_differential_2006}, much work has been devoted to categorical axiomatisations of differentiation. These include  Cartesian differential categories \cite[]{blute_cartesian_2009}, generalised Cartesian differential categories \cite[]{cruttwell_cartesian_2017}, the (re)discovery of tangent categories \cite[]{Rosicky84} in \cite[]{cockett_differential_2014}, as well as ``reverse'' versions of many of these structures \cite[]{cockett_reverse_2020, cruttwell_gallagher_lemay_pronk_2022}. These approaches have provided categorical foundations for aspects of differential linear logic \cite[]{Ehrhard18} and the differential $\lambda$-calculus \cite[]{EhrhardR03}, as well as giving semantics for algorithms in machine learning \cite[]{Crutwell21} and automatic differentiation \cite[]{Alvarez-Picallo23}, and extending key ideas of differential geometry to more general settings (for instance, connections have been generalised so that one may speak of curvature within arbitrary tangent categories).

As the categorical study of differentiation reaches a mature stage, it is natural to ask to what extent the various approaches may be reconciled within a common framework. This question does not only answer a need for unification, but also a desire to better understand the fundamental concepts underpinning differential structures. Indeed, the axioms chosen for these categories often provide a fine-grained description of differential operators, whose reconciliation with more familiar categorical structures is unclear. For instance, take the axiom $D[\pi_1] = \pi_1\circ\pi_1$ of cartesian differential categories \cite[]{blute_cartesian_2009}: why is it needed, and what are its consequences?

Providing transparent axiomatisations and abstract definitions of differentiation is a key motivation for this paper.
More broadly, with the increasing number of categorical structures for differentiation, it has become important to systematise their underlying assumptions, classify their features, and understand their consequences.

Our central thesis is that significant theoretical and practical
advances can be made by recasting categorical differentiation through a fibrational prism.
This is a rather natural step.
Fibrations are a categorical model of dependency, and hence formalise a core idea of differential calculus: that the tangent space is a thickening of the base space.
In fact, our thesis is not new, in that fibrations already arise in the literature on categorical differential structures.
However, our approach is more radical, as it suggests fibrations are the fundamental concept around which categorical differential structures can be developed.
This paper provides concrete evidence for such claim, by showing that a unified fibrational framework is able to encompass a large variety of first-order differential structures --- where by first-order we mean those structures involving only one use of a differential operator to state.
For instance, within the axiomatisations of cartesian differential categories (CDCs) \cite[]{blute_cartesian_2009} and of reverse cartesian derivative  categories (RCDCs) \cite[]{cockett_reverse_2020}, the first-order theory consists of the first five axioms, thus excluding those axioms stating the symmetry of partial derivatives and the linearity of the derivative in its second component.

One might ask why first order differential structures are worthy as an object of independent study. Even though a complete account of differential algebra clearly requires higher order structure ---and we will return to this in Section~\ref{sec:conclusions} --- the first order fragment suffices for various applications of interest. In particular, differentiation is of utmost importance in programming, especially in the context of machine learning. The interplay between the type structure of functions and their derivative raise questions that remain unresolved even in the first-order setting. For example, the derivative of a function $f \colon X+Y \to Z$ has type $(X+Y) \times (X+Y) \to Z$ when solved in the first-order theory of CDCs. However, we might prefer the more accurate type $(X \times X) + (Y \times Y) \to Z$, which captures the intuition that, if our input comes from the type $X$, and we make a small change to it, then we remain in $X$. A systematic mathematical foundation of first-order differential structures is a necessary step towards accounting for type structure in the presence of differential structure.

In developing our framework, the main technical ingredient is the use of {\em sections of a fibration} as the core abstraction driving differential structures---this contrasts with the operator based approach of CDCs, RCDCs, and generalised CDCs, and with the functor based approach of tangent categories.
By tuning the underlying categories and their properties, we will show that sections of fibrations are able to capture a wide range of different first-order theories, encompassing CDCs, RCDCs, generalised CDCs, change actions and tangent categories. The structures involved in this unification turn out to be surprisingly simple. Furthermore, the approach brings a novel, clarifying perspective on constructions that in the standard theory of differential structures are laborious to obtain, or hard to describe directly. Examples are the canonical construction of a CDC from an RDC (see~\cref{thm:cdc_from_rdc} below), the notion of reverse tangent category (see~\cref{def:revtangentcat} below), and the cofree CDC over a tangent category (\cref{th:cdc-of-lin-objs}).

The paper is structured as follows.~\cref{sec:preliminaries} gives background preliminaries on cartesian and (various versions of) additive categories, as well as on fibrations.~\cref{sec:fods} characterises a variety of first-order differential structures as sections of a fibration. In~\cref{sec:diff-objs} we use our framework to conduct an abstract study of linearity and differential objects in first-order differential structures.
Our conclusions, in~\cref{sec:conclusions}, include a discussion of how our approach may be extended to higher order axioms for differentiation.

\paragraph*{Notation and prerequisites.}
We assume the reader is familiar with basic category theory. We write the composite of maps $f \colon X \to Y$ and $g \colon Y \to Z$ as $g \circ f : X \to Z$, which we sometimes abbreviate as $gf$. The identity map on an object $A$ is written $\id_A$. We write $|\cat B|$ for the discretisation of $\cat B$, ie. the category whose objects are those of $\cat B$ and the only morphisms are the identities.



\section{Preliminaries}
\label{sec:preliminaries}

The fundamental idea behind derivation is that of \emph{linear approximation}.
Derivation can be broadly interpreted as an operation that shows how linear variations in function's input map to linear changes in its output.
This simple idea actually rests on a scaffolding of more fundamental ones. What is a function? What is a variation? What does linear mean?

In this section we define the various bits of structure necessary to state a definition of derivative in a categorical setting. Our perspective is that a derivative operates over the morphisms of a cartesian category (of \emph{non-linear maps}), the variations over an objects are given as a fibration (of \emph{bundles}) over said category, and linearity---which really means additivity, is extra structure on the fibers of said category (making them \emph{semi-additive} or \emph{left-additive} categories).

\subsection{Cartesian Structures}
\label{sec:cart-add-structs}

We recapitulate some basic definitions and notation about categories with finite products.

\begin{definition}\label{def:cartesiancat}
    We call \textbf{cartesian} a category with finite products.
    A \textbf{product-preserving functor} $F:\cat A \to \cat B$ is a functor that maps finite product cones to product cones.
    A \textbf{cartesian natural transformation} between product-preserving functors $F$ and $G$ is a natural transformation $\alpha:F \to G$ such that $\alpha_{X \times Y} = \alpha_X \times \alpha_Y$.
    We write $\CartMonCat$ for the \textbf{2-category of cartesian categories}, product-preserving functors, and cartesian natural transformations.
\end{definition}

We will write $A \nfrom{\pi_A} A \times B \nto{\pi_{B}} B$ for the product projections, or simply $A \nfrom{\pi_0} A \times B \nto{\pi_{1}} B$ if we want to gloss over the name of objects. The pairing of maps $f \colon A \to B$ and $g \colon A \to C$ is written $\langle f , g\rangle \colon A \to B \times C$.

We will also need the following definitions:

\begin{definition}\label{def:prod-reflecting}
    A functor $F: \cat A \to \cat B$ is \textbf{product-reflecting} if (i) $FA$ is terminal implies $A$ is terminal, and (ii) if $FA \nfrom{F\pi_A} FP \nto{F\pi_{B}} B$ is a product cone in $\cat B$, then $A \nfrom{\pi_A} P \nto{\pi_{B}} B$ was already a product cone in $\cat A$.
    A functor between cartesian categories is \textbf{product-creating} if it preserves and reflects products.
\end{definition}

\begin{remark}
    Let us excuse for the slight abuse of terminology we are adopting here: our product-preserving (resp. reflecting, creating) functors are really only \emph{finite-products}-preserving.
\end{remark}

\subsection{Additive structures}
If $\cat B$ has finite products, one may form the category of commutative monoids internal to $\cat B$, which we denote by $\CMon(\cat B)$. $\CMon(\cat B)$ inherits products from $\cat B$. It turns out $\CMon(\cat B)$ also has coproducts and these coincide with the products.
Categories with products and coproducts which coincide are said to have \emph{biproducts}: we now recall this notion.

\begin{definition}
    A category $\cat A$ \textbf{has finite biproducts} if it admits all finite products and coproducts, and these coincide.
    Concretely, such a category has a zero object $0$, which admits universal maps from and to every object of $\cat A$, and for every $A,B:\cat A$ there is an object $A \oplus B$ which is both a product and coproduct of $A$ and $B$.
\end{definition}

A category $\cat B$ with biproducts is automatically enriched in commutative monoids. For each pair of objects $A,B$ in $\cat B$, the zero in $\cat B(A,B)$ is defined as:
\begin{equation}
\label{eq:cmon_enrichment0}
    0_{A,B} :=  A \nto{!_A} 0 \nto{?_B} B,
\end{equation}
while sum is defined as:
\begin{equation}
\label{eq:cmon_enrichment1}
    \forall f,g:A \to B : \cat B, \quad f+g := A \nto{\Delta_A} A \oplus A \nto{f \oplus g} B \oplus B \nto{\nabla_B} B.
\end{equation}
Here $\Delta_A$ and $\nabla_B$ are, respectively, the diagonal and codiagonal induced by the product and coproduct structures on $A \oplus B$. Note~\eqref{eq:cmon_enrichment0} and \eqref{eq:cmon_enrichment1} are independent of the specific choice of zero object and coproduct respectively.
This definition motivates working with cartesian categories instead of arbitrary symmetric monoidal categories.

The category $\CMon(\cat B)$ has finite biproducts and, moreover, it has a universal property:

\begin{lemma}
\label[lemma]{lemma:cofree-semiadd}
    Let $\cat B$ be a category with finite products.
    The category $\CMon(\cat B)$ of commutative monoids in $\cat B$ and homomorphisms between them is a category with finite biproducts, and in fact the cofree one on $\cat B$.
    Furthermore, if $F: \cat B \to \cat B'$ is a product-preserving functor, $F$ extends to a product preserving-functor $\CMon(F) : \CMon(\cat B) \to \CMon(\cat B')$.
\end{lemma}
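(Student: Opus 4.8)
The plan is to establish the three assertions in turn: that $\CMon(\cat B)$ has finite biproducts, that it is the cofree such category on $\cat B$, and that $\CMon$ acts functorially on product-preserving functors. For the biproduct structure, recall the products of $\CMon(\cat B)$ are computed in $\cat B$, with terminal object the trivial monoid on the terminal object $1$ of $\cat B$; I would first note that this trivial monoid is also initial, since any homomorphism out of it into $(P,\mu_P,\eta_P)$ is forced to equal $\eta_P$, so it is a zero object. To exhibit $M\times N$ as a coproduct as well, I would equip it with coprojections $\iota_M = \langle \id_M, \eta_N\circ{!_M}\rangle$ and $\iota_N = \langle \eta_M\circ{!_N}, \id_N\rangle$ and, for homomorphisms $f\colon M\to P$ and $g\colon N\to P$, define the copairing $[f,g] := \mu_P\circ(f\times g)$. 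The substance of the argument is then to verify that $\iota_M,\iota_N,[f,g]$ are all monoid homomorphisms, that $[f,g]\circ\iota_M=f$ and $[f,g]\circ\iota_N=g$, and that $[f,g]$ is the unique homomorphism with these properties. These checks rest on the unit and associativity laws together with, crucially, the commutativity of $\mu_P$ and its interchange with the product; this Eckmann--Hilton-style computation is the one genuinely calculational step, and the place where commutativity is indispensable.

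\textbf{Cofreeness.} I would render this as a 2-adjunction: $\CMon(-)$ is right adjoint to the forgetful 2-functor from the 2-category of categories with finite biproducts (and product-preserving functors) into $\CartMonCat$, with counit at $\cat B$ the evidently product-preserving forgetful functor $U\colon\CMon(\cat B)\to\cat B$, $(M,\mu,\eta)\mapsto M$. For the universal property, let $\cat C$ have biproducts and let $G\colon\cat C\to\cat B$ be product-preserving. Every object $C$ of $\cat C$ carries a canonical commutative monoid structure, with multiplication the codiagonal $\nabla_C\colon C\dir C\to C$ and unit the map $0\to C$; moreover every morphism of $\cat C$ is automatically a homomorphism for it, because the codiagonal is natural and $\dir$ agrees with $\times$ on maps in a biproduct category. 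Transporting along $G$ via the comparison $\phi\colon GC\times GC\isoto G(C\dir C)$ produces a commutative monoid $(GC,\,G\nabla_C\circ\phi,\,\dots)$ in $\cat B$, and this assignment is a product-preserving lift $\hat G\colon\cat C\to\CMon(\cat B)$ with $U\hat G=G$. What upgrades this factorisation to an adjunction is uniqueness: a product-preserving functor must send the product-definable monoid structure on $C$ to the corresponding structure on its image, so $\hat G$ is forced on objects, and it is forced on morphisms because $U$ is faithful; I would then check 2-naturality in cartesian transformations.

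\textbf{Functoriality, and the main obstacle.} Given product-preserving $F\colon\cat B\to\cat B'$, I would define $\CMon(F)$ by transporting structure maps along the product comparisons of $F$, $(M,\mu,\eta)\mapsto(FM,\,F\mu\circ\phi_{M,M},\,F\eta\circ\phi_0)$, sending homomorphisms to homomorphisms by naturality of $\phi$; since products in $\CMon$ are computed underneath and $F$ preserves them, $\CMon(F)$ preserves products. I expect the two delicate points to be (a) the coproduct verification above, where the commutativity of $\mu_P$ must be deployed carefully to show $[f,g]$ is simultaneously a homomorphism and the unique one; and (b) establishing that the canonical monoid structure on an object of a biproduct category is the unique one and is preserved by every product-preserving functor, which is exactly what promotes the factorisation $U\hat G=G$ to a genuine $2$-adjunction. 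The remaining $2$-categorical bookkeeping is routine but should be recorded.
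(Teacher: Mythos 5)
Your proposal is correct and follows essentially the same route as the paper: the same coproduct injections $\langle \id, 0\rangle$ and Eckmann--Hilton-style copairing for the biproducts, the Fox-style argument (canonical codiagonal monoid structure on every object of a biproduct category, with every map automatically a homomorphism) for cofreeness, and transport of structure along product comparisons for functoriality. The paper's own proof is only a sketch that defers the cofreeness argument to Fox (1976); you have simply supplied the details it omits.
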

\begin{proof}
    For every $A,B:\CMon(\cat B)$, $A \times B$ is a biproduct: its coproduct injections are $\iota_A := {A \nlongto{\langle \id_A, 0_B\rangle} A \times B}$ and $\iota_B := B \nlongto{\langle 0_A \times \id_B \rangle} A \times B$, where $0$ denotes the zero maps of the commutative monoids.
    The fact that $\CMon(\cat B)$ is cofree can be verified by following an argument similar to that of~\cite{fox_coalgebras_1976}.
    The last part follows from the fact that product preserving functors preserve commutative monoid structure.
\end{proof}

Categories with finite biproducts are also called \emph{semi-additive}. We denote the 2-category of categories with finite biproducts, (bi)product-preserving functors and cartesian natural transformations as $\SemiaddCat$.

Having all finite biproducts is a strong requirement. A weaker alternative, introduced in
\cite[]{blute_cartesian_2009}, is that of~\emph{cartesian left-additive categories} (\emph{CLA}s), which are cartesian categories $\cat B$ such that every object of $\cat B$ is assigned a chosen commutative monoid structure.

\begin{definition}[\bf Cartesian Left-Additive Category]
\label[definition]{def:CLA}
    A \textbf{cartesian left-additive category (CLA)} is a pair $(\cat B, M)$ where $\cat B$ is a category with finite products and a functor $M \colon |\cat B| \to \CMon(\cat B)$ associating to each object a commutative monoid such that:
    \begin{enumerate}
        \item the carrier of $M(X)$ is $X$;
        \item $M(X \times Y) = M(X) \times M(Y)$.
    \end{enumerate}
\end{definition}

Note we do not require maps in $\cat B$ to preserve the chosen commutative monoid structures. CLAs provide a setting which is in-between that of a category with biproducts and category with products. In particular, the product of a CLA category has some of the structure of a coproduct too.
As in the proof of Lemma~\ref{lemma:cofree-semiadd}, we can define injections $\iota_A := A \xrightarrow{\langle \id_A , 0_B \rangle} A \times B$ and $\iota_B := B \xrightarrow{\langle 0_A , \id_B \rangle} A \times B$, where $0_A$  and $0_B$ denote the zeros of the respective commutative monoids. We can also form `cotuples', as follows: given $f:A \to C$ and $g: B \to C$, we can define $[f,g] \colon A \times B \to C$ by $[f,g] := +_C \circ (f \times g)$ where $+_C$ is the addition on $C$.

Like categories with finite biproducts, every hom-set of a CLA $\cat B$ is a commutative monoid.
However, only left composition preserves this structure, i.e.~only $(f+g)h = fh + gh$. Maps preserving the monoid structure completely are called \emph{additive}.

\begin{definition}[\bf Additive Maps]
\label[definition]{def:additive-maps}
    In a CLA $\cat B$, a map $f$ such that, for every $g,h$, we have $f(g+h) = fg + fh$ and $f0 = 0$, is called \textbf{additive}.
    The wide subcategory of additive maps of $\cat B$, written $\cat B^+$, is a category with biproducts whose inclusion $i:\cat B^+ \to \cat B$ creates products.
\end{definition}

The reason $i$ creates products is that, even though we did not mention maps at all in~\cref{def:CLA}, the fact that projections and diagonals of a cartesian category are natural makes them automatically additive. We will broadly adopt this terminology, calling `additive' maps that preserve some commutative monoid structure, even when not in the setting of a CLA.

It remains to introduce a notion of functor preserving CLA structure.

\begin{definition}
    A functor $F:\cat A \to \cat B$
    between CLAs $(\cat B, M)$ and $(\cat B', M')$ is a \textbf{CLA functor} if it preserves products and the chosen monoids, i.e.~such that $M'\circ |F| = \CMon(F) \circ M$.
\end{definition}

Thus a CLA functor is a product-preserving functor which respects the choice of monoids.
By virtue of this, a CLA functor $F$ satisfies the expected equations $F(f+g) = Ff + Fg$ and $F(0) = 0$ and preserves additive maps.

\begin{definition}\label{def:CLACat}
	CLAs, CLA functors and natural transformations form a 2-category, which we denote by $\CLACat$.
\end{definition}

One can take a more structural approach to CLA structures, by axiomatizing instead the subcategory of additive maps:

\begin{definition}
\label[definition]{def:equip-of-add-maps}
	Let $\cat B$ be a category with finite products.
    An \textbf{equipment of additive maps} on $\cat B$ consists of an bijective-on-objects, product-creating functor from a semi-additive category $\cat A$:
    \begin{equation}
        i: \cat A \longto \cat B.
    \end{equation}
	We say an object (map) in $\cat B$ is \textbf{additive} iff it's the image of some object (map) in $\cat A$.
\end{definition}

\begin{lemma}
    Let $i: \cat A \to \cat B$ be an equipment of additive maps. Then, in $\cat B$:
    \begin{enumerate}
        \item projections out of a finite product are additive,
        \item diagonals are additive.
    \end{enumerate}
\end{lemma}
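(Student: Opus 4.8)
The plan is to exploit the two defining features of an equipment of additive maps---that $i$ is bijective on objects and product-creating (in particular product-preserving)---to realise the projections and diagonals of $\cat B$ as images under $i$ of the corresponding structure maps in $\cat A$. Since $i$ is bijective on objects, every object of $\cat B$ has the form $iA$ for a unique object $A$ of $\cat A$, so it suffices to treat products of the shape $iA \times iB$. The guiding idea is that products in $\cat B$ can be taken to be the $i$-images of products in $\cat A$, and then all the universal maps are manifestly additive.

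First I would settle the projections. Because $\cat A$ is semi-additive it has finite products, so I can form $A \times B$ in $\cat A$ with its projections $\pi_A \colon A \times B \to A$ and $\pi_B \colon A \times B \to B$, which are by construction morphisms of $\cat A$. Since $i$ preserves products, the cone $iA \xleftarrow{i\pi_A} i(A\times B) \xrightarrow{i\pi_B} iB$ is a product cone in $\cat B$; hence $i(A \times B)$ is a product of $iA$ and $iB$ whose projections are literally $i\pi_A$ and $i\pi_B$, and these are additive by \cref{def:equip-of-add-maps} as they lie in the image of $i$. As any two products of $iA, iB$ coincide up to canonical isomorphism, this establishes (1).

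The diagonals are handled by the same strategy. The diagonal $\Delta_A = \langle \id_A, \id_A \rangle \colon A \to A \times A$ is a morphism of $\cat A$, and a product-preserving functor preserves pairings (comparing against the projections $i\pi_A$, one checks $i\langle f,g\rangle = \langle if, ig\rangle$). Consequently $i\Delta_A = \langle \id_{iA}, \id_{iA}\rangle = \Delta_{iA}$ is exactly the diagonal of the product $i(A\times A)$ in $\cat B$, and being the image of an $\cat A$-morphism it is additive, giving (2).

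I expect the only delicate point to be the interplay between the definition of additivity and the fact that products are pinned down only up to isomorphism: strictly speaking, the argument shows that the product structure \emph{transported} from $\cat A$ along $i$ has additive projections and diagonal, which is the canonical choice one should work with. Making the statement precise for an \emph{arbitrary} a priori product structure on $\cat B$ would require verifying that the comparison isomorphism to the transported product is itself additive, and this is where one must be careful. It is worth noting that only product preservation and bijectivity on objects are used here; product-reflection plays no role in this lemma.
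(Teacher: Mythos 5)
Your proof is correct and takes essentially the same route as the paper's (one-line) argument: both realise the projections and the diagonal as $i$-images of the corresponding structure maps of the biproduct in the semi-additive category $\cat A$, using that $i$ is bijective on objects and product-preserving, so that these maps are additive by definition. Your closing remarks are apt and in fact slightly more careful than the paper: the paper's proof cites product-\emph{reflection}, but the argument really only uses preservation, and the up-to-isomorphism caveat you flag (an arbitrary product cone in $\cat B$ versus the one transported from $\cat A$) is glossed over in the paper as well.
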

\begin{proof}
	Follows from the definition of additive and the fact $i$ is product-reflecting.
\end{proof}

Observe that maps in $\cat A$ are morphisms of commutative monoids by virtue of living in a category with biproducts, and since $i$ preserves products, all additive objects are commutative monoids and all additive maps preserve this structure.
In fact we have:

\begin{lemma}
\label[lemma]{lemma:choice-of-cmons}
    An equipment of additive maps $i:\cat A \to \cat B$ on a cartesian monoidal category $\cat B$ induces a map $m : \cat A \to \CMon(\cat B)$, that we call its \textbf{choice of commutative monoids}:
    \begin{equation}
		\begin{tikzcd}[ampersand replacement=\&,sep=scriptsize]
			\& {\CMon(\cat B)} \\
			{\cat A} \& {\cat B}
			\arrow["i"', from=2-1, to=2-2]
			\arrow["{\varepsilon_{\cat B}}", from=1-2, to=2-2]
			\arrow["{\exists!\,m}", dashed, from=2-1, to=1-2]
		\end{tikzcd}
    \end{equation}
\end{lemma}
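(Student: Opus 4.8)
The plan is to recognise the claimed factorisation as a direct instance of the universal property of $\CMon(\cat B)$ as the cofree category with finite biproducts on $\cat B$, which was established in~\cref{lemma:cofree-semiadd}. First I would identify the functor $\varepsilon_{\cat B} \colon \CMon(\cat B) \to \cat B$ appearing in the diagram as the counit of that cofree construction, namely the forgetful functor sending a commutative monoid to its carrier; this functor is product-preserving and faithful. By~\cref{def:equip-of-add-maps}, an equipment of additive maps is precisely a product-creating---hence product-preserving---functor $i$ out of a semi-additive category $\cat A$. Thus $i$ is exactly the kind of datum to which the cofree universal property applies, and that property delivers a unique biproduct-preserving $m \colon \cat A \to \CMon(\cat B)$ with $\varepsilon_{\cat B} \circ m = i$, as required.

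To witness existence concretely, rather than leaning entirely on the adjunction, I would build $m$ by hand. Each object $A$ of the semi-additive category $\cat A$ carries a canonical commutative monoid structure given by its codiagonal $\nabla_A \colon A \oplus A \to A$ together with the unique map $0 \to A$ out of the zero object. Since $i$ preserves finite products---in particular it sends the biproduct $A \oplus A$ to the product $iA \times iA$ and the terminal (zero) object to the terminal object of $\cat B$---the images $i(\nabla_A)$ and $i(0 \to A)$ equip $iA$ with a commutative monoid structure in $\cat B$, the monoid axioms being commuting diagrams built from products which $i$ preserves. I set $m(A)$ to be this monoid. On a morphism $f \colon A \to B$ I would use that every map in a semi-additive category is additive, so $if$ commutes with the transported additions and units and is a homomorphism $m(A) \to m(B)$; set $m(f) := if$. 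Functoriality of $m$ is inherited from $i$, and $\varepsilon_{\cat B} \circ m = i$ holds by construction, as $\varepsilon_{\cat B}$ merely forgets the monoid structure.

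The step I expect to require the most care is uniqueness. Given any biproduct-preserving $m'$ with $\varepsilon_{\cat B} \circ m' = i$, the object $m'(A)$ automatically has carrier $iA$, so the real content is that its monoid structure is forced to coincide with that of $m(A)$. Here I would exploit that $m'$ preserves biproducts: applying $m'$ to the codiagonal $\nabla_A$ and to the zero map of $\cat A$, and using the fact that in $\CMon(\cat B)$ the underlying map of the codiagonal of a monoid object is exactly its multiplication (by the unit laws), one gets that the multiplication of $m'(A)$ equals $i(\nabla_A)$, i.e.\ the multiplication of $m(A)$, and similarly for units. Hence $m'(A) = m(A)$ on objects, and since $\varepsilon_{\cat B}$ is faithful, agreement on underlying maps then forces $m'(f) = m(f)$. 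Abstractly this is just the uniqueness clause of the cofree universal property, but spelling it out is where the biproduct structure of $\cat A$ genuinely does the work.
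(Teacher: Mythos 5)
Your proposal is correct and takes essentially the same route as the paper, whose entire proof is the one-line observation that the factorisation is a direct consequence of the cofreeness of $\CMon(\cat B)$ established in \cref{lemma:cofree-semiadd}. Your first paragraph is exactly that argument, and the remaining two paragraphs are a correct (if optional) unpacking of the existence and uniqueness clauses of that universal property.
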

\begin{proof}
	Direct consequence of~\cref{lemma:cofree-semiadd}.
\end{proof}


Therefore equipments of additive maps give a way to fit a category of interest $\cat B$ with commutative monoids.
Compared to CLAs or supplies~\cite[]{fong_supplying_2020}, this choice is functorial and thus easier to interoperate with other categorical constructions---indeed, this approach will shine in the proof of~\cref{th:cdc-of-lin-objs}.

However, observe that while all additive maps preserve the monoid structure on additive objects, the converse is not true: a map between additive objects in $\cat B$ which preserves the monoid structure isn't necessarily additive.
In other words, $m$ might fail to be full.

\begin{definition}
\label[definition]{def:saturated-equip}
    An equipment of additive maps $i:\cat A \to \cat B$ is \textbf{saturated} if its associated choice of commutative monoids is fully faithful:
	\begin{equation}
		\begin{tikzcd}[ampersand replacement=\&,sep=scriptsize]
			\& {\CMon(\cat B)} \\
			{\cat A} \& {\cat B}
			\arrow["i"', from=2-1, to=2-2]
			\arrow["{\varepsilon_{\cat B}}", from=1-2, to=2-2]
			\arrow["{\ff\ m}", dashed, from=2-1, to=1-2]
		\end{tikzcd}
	\end{equation}
\end{definition}

Thus an equipment of additive maps is saturated if we can check if a map between additive objects is additive by simply checking whether it's a morphism of the chosen commutative monoids; and if an additive map is the image of at most one map in $\cat A$.

\begin{lemma}
\label[lemma]{lemma:CLA-charac}
    A saturated equipment of additive maps is equivalent to a CLA structure.
\end{lemma}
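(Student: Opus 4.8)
The plan is to prove the equivalence by exhibiting two mutually inverse constructions over a fixed cartesian category $\cat B$: one turning a CLA structure $(\cat B, M)$ into a saturated equipment, the other turning a saturated equipment $i\colon \cat A \to \cat B$ into a CLA structure. Before either direction, I would isolate one bridging fact, valid in any CLA: a map $f\colon X \to Y$ is additive in the sense of \cref{def:additive-maps} if and only if it is a homomorphism of the chosen commutative monoids, i.e. $f \circ {+_X} = {+_Y} \circ (f \times f)$ and $f \circ 0 = 0$. The ``if'' direction is immediate after substituting $g + h = {+_X} \circ \langle g, h\rangle$; the ``only if'' direction drops out by instantiating additivity at $g = \pi_0$, $h = \pi_1$, so that $g + h = \id_{X \times X}$ and hence $f \circ {+_X} = {+_Y} \circ (f \times f)$. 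This pins down the additive maps of a CLA as exactly the morphisms of $\CMon(\cat B)$ between the chosen monoids, and it is the lever for both round trips.

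For the direction from CLAs to equipments, given $(\cat B, M)$ I would take $\cat A := \cat B^+$ with its inclusion $i\colon \cat B^+ \to \cat B$, which by \cref{def:additive-maps} is a bijective-on-objects, product-creating functor out of a category with biproducts, hence an equipment of additive maps. To verify saturation I examine its choice of commutative monoids $m\colon \cat B^+ \to \CMon(\cat B)$ from \cref{lemma:choice-of-cmons}. Faithfulness of $m$ is automatic, since $\varepsilon_{\cat B} \circ m = i$ and both $\varepsilon_{\cat B}$ and $i$ are faithful; fullness is precisely the bridging fact, as any morphism $M(X) \to M(Y)$ in $\CMon(\cat B)$ is a monoid homomorphism, hence an additive map, hence a morphism of $\cat B^+$ whose underlying map it recovers. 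I would also record that the monoid $m(X)$ produced this way has addition equal to the biproduct codiagonal $\nabla_X = [\id_X, \id_X]$ of $X \oplus X$ in $\cat B^+$, which by $[f,g] = {+_X} \circ (f \times g)$ unwinds to $+_X$ itself; so $m(X) = M(X)$ on the nose, closing the CLA $\to$ equipment $\to$ CLA round trip.

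For the converse, given a saturated equipment $i\colon \cat A \to \cat B$ I would use that $i$ is a bijection on objects to define $M\colon |\cat B| \to \CMon(\cat B)$ by $M(X) := m(i^{-1}X)$. The carrier clause of \cref{def:CLA} holds because $\varepsilon_{\cat B} \circ m = i$ forces the underlying object of $m(i^{-1}X)$ to be $i(i^{-1}X) = X$; the product clause $M(X \times Y) = M(X) \times M(Y)$ follows because $i$ preserves products, so $i^{-1}(X \times Y) = i^{-1}X \times i^{-1}Y$, while $m$, being the transpose of $i$ along the cofree adjunction of \cref{lemma:cofree-semiadd}, preserves biproducts. For the remaining round trip I would then check that the additive maps of this CLA coincide with the image of $\cat A$: by the bridging fact they are the monoid homomorphisms, and saturation (fullness of $m$) says each arises, uniquely, from a map of $\cat A$, whence $\cat A \cong \cat B^+$ over $\cat B$ as semi-additive categories.

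The main obstacle I anticipate is not the categorical substance but the strictness bookkeeping demanded by \cref{def:CLA}, whose clauses are on-the-nose equalities rather than mere isomorphisms. Concretely, I must ensure that the componentwise monoid structure of $m(A) \times m(B)$ in $\CMon(\cat B)$ agrees \emph{strictly} with the structure $m$ assigns to $A \times B$, and that the chosen products used to build the biproducts of $\cat B^+$ are literally those of $\cat B$; both hinge on selecting the transpose $m$ so that it preserves chosen products strictly, which the explicit construction underlying \cref{lemma:cofree-semiadd} permits. Once these identifications are made strict, the two constructions are manifestly inverse and the stated equivalence follows.
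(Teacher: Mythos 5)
Your proposal is correct and follows essentially the same route as the paper: from a CLA one passes to the semi-additive subcategory $\cat B^+$ of additive maps, and from a saturated equipment one transports the object part of the choice-of-monoids functor $m$ along the object bijection to obtain the functor $|\cat B| \to \CMon(\cat B)$. You supply more detail than the paper does---notably the bridging fact that additive maps are exactly homomorphisms of the chosen monoids (which justifies the saturation claim the paper dispatches with ``by definition'') and the round-trip checks---so the extra care is a refinement of, not a departure from, the paper's argument.
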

\begin{proof}
    Let $\cat B$ be a cartesian category.
    Given a choice of commutative monoids for $\cat B$ as in~\cref{def:CLA}, one can produce a semi-additive category of additive maps $\cat B^+$, whose inclusion is bijective-on-objects and creates products, as noticed in~\cref{def:additive-maps}.
    By definition this is a saturated equipment of additive maps.

    \emph{Vice versa}, given a saturated equipment consider the object part $|m|:|\cat A| \to |\CMon(\cat B)|$ of the choice of monoids functor $m$. Since $\phi:|\cat A| \cong |\cat B|$, consider the functor $\phi^{-1}|m|:|\cat B| \to |\CMon(\cat B)|$. It is clearly identity-on-objects and product preserving, thus satisfying~\cref{def:CLA}.

\end{proof}

\subsection{Categories with bundles}
\emph{Categories with bundles} are similar to categories with display maps~\cite[§8.3]{taylor_practical_1999} in intent, but differ in that we require bundle maps to be closed under composition.

\begin{definition}[Category with Bundles]
\label[definition]{def:disp-map-cat}
    A \textbf{category with bundles} $(\cat B, \cat F)$ is a category together with a distinguished subcategory $\cat F$, whose morphisms we call \textbf{bundles}, with the property that all pullbacks of bundles against arbitrary maps exist and are again bundles.
    A category with bundles is \textbf{cartesian} when both $\cat  B$ and $\cat F$ admit all finite products (meaning $\cat F$ contains all product projections).
    A \textbf{functor of categories with bundles} between $(\cat B, \cat F)$ and $(\cat B', \cat F')$ is a functor $F:\cat B \to \cat B'$ that preserves bundles and pullbacks thereof, i.e.~when $p:E \to B$ is a bundle then so is $F(p)$ and when $f:A \to B$ is any map then $F(f^*p) = Ff^*(Fp)$.
    Categories with bundles, functors of categories with bundles, and natural transformations, form a 2-category $\BundleCat$.
\end{definition}

\begin{example}
    The archetypal example of a category with bundles is that of real smooth manifolds $\Smooth$: there, not all pullbacks exist, but submersions (surjective maps with surjective differentials) can be pulled back along any smooth map and are closed under composition, thus forming a category with bundles.
\end{example}


\subsection{Fibrations}
\label{sec:fibs}

We recall some basic definitions of the theory of fibrations, directing the reader to~\cite{jacobs_categorical_1999} and~\cite{streicher_fibered_2023} for a more comprehensive treatment of the subject. 

\begin{definition}
\label[definition]{def:cartesian-vertical}
    Let $q:\cat E \to \cat B$ be a functor.
    A map $f: X \to Y$ in $\cat E$ is said to be \textbf{cartesian} if for all maps $g : E' \to Y$ in $\cat E$ and $h : q(E') \to q(X)$ in $\cat B$ such that $h$ factors $q(g)$ through $q(f)$, there is a unique map $\varphi : E' \to X$ such that $q(\varphi) = h$ and $g = f \varphi$.
    \begin{equation}
        \begin{tikzcd}[ampersand replacement=\&, sep=scriptsize]
            {\forall E'} \\
            \& {X} \& Y \\
            {q(E')} \& q(X) \& {q(Y)}
            \arrow["q(f)", from=3-2, to=3-3]
            \arrow["{f}", from=2-2, to=2-3]
            \arrow["{\forall g}", curve={height=-12pt}, from=1-1, to=2-3]
            \arrow["{q(g)}"', curve={height=18pt}, from=3-1, to=3-3]
            \arrow["{\exists!\,\varphi}"', dashed, from=1-1, to=2-2]
            \arrow["{\forall h}", from=3-1, to=3-2]
        \end{tikzcd}
    \end{equation}
    In this case we also say that $f$ is the \textbf{cartesian lift} of $q(f)$.
\end{definition}

\begin{remark}
    To avoid confusion, we note cartesian maps and cartesian products have nothing to do with each other, except being both named after Descartes.
\end{remark}

Keeping $q$ fixed as above, we say an object $E$ of $\cat E$ is said \textbf{to be over} an object $B$ of $\cat B$ when $q(E) = B$, and same for arrows. A map $v$ in $\cat E$ is called $q$-\textbf{vertical} (or simply vertical, if $q$ is clear from context) if it is over an identity map.

Given an object $B$ of $\cat B$, we define the \textbf{fibre} $\cat E_B$ to be the category whose objects are the objects of $\cat E$ which are over $B$, and whose maps are those over $\id_B$.

\begin{definition}
\label[definition]{def:fibration}
     A \textbf{fibration} is a functor $q : \cat E \to \cat B$ for which every map of the form $f:B \to q(Y)$ admits a cartesian lift $f_Y:f^*Y \to Y$.
\end{definition}

We call the object $f^*Y$ the \textbf{reindexing} of $Y$ along $f$.

We assume that all our fibrations are \textbf{cloven}, meaning the lifts are functorially chosen.

\begin{remark}
\label{rmk:lifts-are-pbs}
    With this structure, the total category of a fibration $q$ comes equipped with a canonical factorization system in which the left class is comprised of the $q$-vertical maps and the right class of the $q$-cartesian ones.
    Consequently, each map $h : E' \to E$ in the total category factors as a map $h^v$ in the fibre over $E'$ followed by a cartesian map $h^c$.
    The universal property of cartesian maps then says that vertical maps can always be pulled back along cartesian ones, and the resulting pullback square has cartesian and vertical maps on opposite sides.
\end{remark}

\begin{example}[Simple fibration]
\label{ex:simple_fib}
    Let $\cat B$ be a category with finite products.
    Define $S(\cat B)$ to be the category whose objects are pairs of objects ${A' \choose A}$ of $\cat B$ and whose morphisms are pairs of morphisms:
    \begin{equation}
    \label{eq:simple-fib}
        {f^\flat \choose f} : {A' \choose A} \chartto {B' \choose B} \qquad\text{ where }\qquad \begin{matrix}
            f^\flat : A \times A' \to B'\\
            f:A \to B
        \end{matrix}
    \end{equation}
The composite of ${f^\flat \choose f} : {A' \choose A} \chartto {B' \choose B}$ and ${g^\flat \choose g} : {B' \choose B} \chartto {C' \choose C}$ is ${h \choose gf} : {A' \choose A} \chartto {C' \choose C}$ where $h := A \times A' \xrightarrow{\Delta_A \times \id_{A'}} A \times A \times A' \xrightarrow{f \times f^\flat} B \times B'\xrightarrow{g^\flat} C'$.   The simple fibration $\simple_{\cat B} : S(\cat B) \to \cat B$ maps ${A' \choose A}$ to $A$ and ${f^\flat \choose f} : {A' \choose A} \chartto {B' \choose B}$ to $f:A \to B$.
    A cartesian morphism is one of the form ${\pi_{B'} \choose f} : {B' \choose A} \chartto {B' \choose B}$ where $\pi_{B'}$ is a product projection.
    A vertical morphism is one for which the bottom map is the identity.

(Note: another way to view the simple fibration is as the full subcategory of the codomain fibration consisting of projection maps).
\end{example}

\begin{example}[Simple lens fibration]
\label{ex:lens_fib}
    Let $\cat B$ be a category with finite products.
    Define $L(\cat B)$ to be the category whose objects are pairs of objects ${A' \choose A}$ of $\cat B$ and whose morphisms are pairs of morphisms as follows:
    \begin{equation}
        \label{eq:lensfib}
        {f^\flat \choose f} : {A' \choose A} \chartto {B' \choose B} \qquad\equiv\qquad \begin{matrix}
            f^\flat : A \times B' \to A'\\
            f:A \to B
        \end{matrix}
    \end{equation}
    The lens fibration $\lens_{\cat B} : L(\cat B) \to \cat B$ maps ${A' \choose A}$ to $A$ and ${f^\flat \choose f} : {A' \choose A} \chartto {B' \choose B}$ to $f:A \to B$.
    The name is due the fact that pairs of morphisms as in \eqref{eq:lensfib} are also known as \emph{lenses}, and are widely used to model operations with a reverse mode, in contexts such as functional programming~\cite{lenses-haskell}, databases~\cite{Bancilhon-lensdatabases}, game theory~\cite{Hedges17}, and machine learning~\cite{Crutwell21}.
\end{example}

\begin{example}[Codomain fibration]
\label{ex:cod_fib}
    Let $\cat B$ be a category with pullbacks. We write $\cat B^{\downarrow}$ for the arrow category of $\cat B$, whose objects are morphisms in $B$, and a morphism from $f : A \to B$ to $g : C \to D$ is a commuting square
    \begin{equation}\label{eq:comparrowcat}
    	   \begin{tikzcd}[ampersand replacement=\&,sep=small]
                {A} \& {C} \\
                B \& {D}
                \arrow["g", from=1-2, to=2-2]
                \arrow["h"', from=2-1, to=2-2]
                \arrow["f"', from=1-1, to=2-1]
                \arrow[from=1-1, to=1-2]
            \end{tikzcd}
    \end{equation}
    The codomain functor $\cod:\cat B^{\downarrow} \to \cat B$ maps an object $f : A \to B$ to $B$, and a morphism as in~\eqref{eq:comparrowcat} to $h : B \to D$. $\cod$ is a fibration, where cartesian maps are defined by pullback.
    Indeed, the cartesian maps in $\cat B^\downarrow$ are pullback squares, while vertical maps are square whose bottom is an identity:
    \begin{equation}
        \begin{matrix}
            \begin{tikzcd}[ampersand replacement=\&,sep=small]
                {f^*E'} \& {E'} \\
                B \& {B'}
                \arrow["p", from=1-2, to=2-2]
                \arrow["f"', from=2-1, to=2-2]
                \arrow[from=1-1, to=2-1]
                \arrow[from=1-1, to=1-2]
                \arrow["\lrcorner"{anchor=center, pos=0.125}, draw=none, from=1-1, to=2-2]
            \end{tikzcd}
            &\hspace*{5ex}&
            \begin{tikzcd}[ampersand replacement=\&,sep=small]
                E \& {E'} \\
                B \& {B}
                \arrow["p", from=1-2, to=2-2]
                \arrow[Rightarrow, no head, from=2-1, to=2-2]
                \arrow["q"', from=1-1, to=2-1]
                \arrow["v", from=1-1, to=1-2]
            \end{tikzcd}
            \\
            \text{cartesian square}
            &&
            \text{vertical square}
        \end{matrix}
    \end{equation}
\end{example}

In the sequel we will relate the codomain fibration to tangent categories, see~\cite{cockett_differential_2014}. However, important examples of tangent categories do not have all pullbacks, thus a refinement of the codomain fibration is required, which applies to bundles (see Definition~\ref{def:disp-map-cat}). Every category with bundles gives rise to a fibration, which is intuitively the codomain fibration of~\cref{ex:cod_fib} modified to only require the ability to reindex bundles:

%

\begin{example}[Fibration of bundles]
\label{ex:bun_fib}
    Let $(\cat B, \cat F)$ be a category with bundles.
    Define $\cat B^\downbundle$ to be the subcategory of the arrow category $\cat B^\downarrow$ of $\cat B$ whose objects are restricted to maps in $\cat F$, while still allowing any map in $\cat B$ in the squares comprising the morphisms of $\cat B^\downbundle$.
    Since by assumption maps in $\cat F$ are stable under pullback, $\cod_{\cat B} : \cat B^\downbundle \to \cat B$ is a fibration.
\end{example}

\begin{example}[Simple fibration as a fibration of bundles]
    A category with finite products $\cat B$ defines a category with bundles where the bundles are the product projections.
    The fibration of bundles associated to such a structure can be seen to be equivalent to the simple fibration over $\cat B$, introduced in~\cref{ex:simple_fib}.
\end{example}


We will also need the notion of a fibration with fibred products/biproducts. This definition instantiates the more general one, for fibred finite limits, given e.g.~in~\cite{jacobs_categorical_1999}.

\begin{definition}\label{def:fibredproducts}
    A fibration has \textbf{fibred products} if each fibre has products and reindexing preserves them.
    Likewise, a fibration has \textbf{fibred biproducts} if each fibre has biproducts and reindexing preserves them.
\end{definition}

\begin{example}
    The fibres of the codomain fibration have products given by pullback.
    Since reindexing is also given by pullback, it follows the codomain fibration has fibred products.
    Similarly, the simple fibration admits fibred products given by ${B \choose A} \times_A {B' \choose A} = {B \times B' \choose A}$.
\end{example}

We recall the following easy fact, appearing as~\cite[Lemma~1.8.4]{jacobs_categorical_1999}:

\begin{lemma}
    Having fibred products is stable under change of base, that is: if $q:\cat E \to \cat B$ is a fibration with fibred products, and $F_0:\cat A \to \cat B$ is any functor, then the pullback $F_0^*q$ has fibred products.
\end{lemma}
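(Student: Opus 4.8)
The plan is to reduce the claim to the original fibration $q$ by showing that the fibres of $F_0^*q$, and its reindexing functors, are—up to the evident identification—just those of $q$ precomposed with $F_0$. Recall that the total category of $F_0^*q$ has as objects the pairs $(A,E)$ with $A\in\cat A$, $E\in\cat E$ and $F_0 A = qE$, as morphisms the pairs $(a,e)$ with $F_0 a = q e$, and as projection the assignment $(A,E)\mapsto A$; that $F_0^*q$ is again a fibration is standard (the pullback of a fibration along any functor is a fibration, see~\cite{jacobs_categorical_1999}), so it remains only to treat the products.

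First I would identify the fibres. The fibre $(F_0^*q)_A$ consists of the objects $(A,E)$ with $qE=F_0 A$ together with the vertical maps $(\id_A,e)$ in which $e$ is over $\id_{F_0 A}$; this is precisely the fibre $\cat E_{F_0 A}$ of $q$. Since $q$ has fibred products, $\cat E_{F_0 A}$ has products, and hence so does every fibre of $F_0^*q$.

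Next I would identify reindexing. The key point is the characterisation of cartesian maps in the pullback: a morphism $(a,e)$ of $F_0^*q$ is cartesian for $F_0^*q$ if and only if $e$ is cartesian for $q$. Granting this, the cartesian lift of $a\colon A\to A'$ at an object $(A',E')$ is $(a,f_{E'})$, where $f_{E'}\colon (F_0 a)^*E'\to E'$ is the cartesian lift of $F_0 a$ at $E'$ in $\cat E$. Consequently the reindexing functor $a^*\colon (F_0^*q)_{A'}\to(F_0^*q)_A$ is, under the fibre identifications of the previous paragraph, exactly the reindexing functor $(F_0 a)^*\colon \cat E_{F_0 A'}\to\cat E_{F_0 A}$ of $q$.

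Finally I would conclude: since $q$ has fibred products, $(F_0 a)^*$ preserves products, so $a^*$ does too, and therefore $F_0^*q$ has fibred products. The only substantive step is the characterisation of $F_0^*q$-cartesian maps used above; everything else is transport of structure along the identification $(F_0^*q)_A\cong\cat E_{F_0 A}$. I expect this characterisation—verified directly from the universal property in~\cref{def:cartesian-vertical}—to be the main, though entirely routine, obstacle. The same argument with ``products'' replaced by ``biproducts'' yields the corresponding statement for fibred biproducts.
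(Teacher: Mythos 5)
Your argument is correct and is the standard one: the paper itself gives no proof of this lemma, simply recalling it as Lemma~1.8.4 of~\cite{jacobs_categorical_1999}, and your identification of the fibres $(F_0^*q)_A\cong\cat E_{F_0A}$ and of the reindexing functors $a^*$ with $(F_0a)^*$ is exactly how that reference proceeds. The one step you flag as the ``substantive'' obstacle---that $(a,e)$ is $F_0^*q$-cartesian when $e$ is $q$-cartesian---is indeed a routine unwinding of \cref{def:cartesian-vertical}, and only that direction of the equivalence is actually needed to construct the lifts.
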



\begin{definition}[{\bf Fibred Functors}]
    Given fibrations $q: \cat E \to \cat B$ and $q':\cat E' \to \cat B'$,
    a \textbf{fibred functor} $(F_0,F_1):q \to q'$ consists of functors $F_0:\cat B \to \cat B'$ and $F_1 : \cat E \to \cat E'$ such that $F_1$ preserves cartesian morphisms and the following square commutes:
    \begin{equation}
        \begin{tikzcd}[ampersand replacement=\&,sep=scriptsize]
            {\cat E} \& {\cat E'} \\
            {\cat B} \& {\cat B'}
            \arrow["{F_0}", from=2-1, to=2-2]
            \arrow["{F_1}", from=1-1, to=1-2]
            \arrow["q"', from=1-1, to=2-1]
            \arrow["{q'}", from=1-2, to=2-2]
        \end{tikzcd}
    \end{equation}
    Fibrations and fibred functors form a category $\Fib$.
    For every fixed category $\cat B$, the subcategory of $\Fib$ spanned by fibrations over $\cat B$ and fibred functors where $F_0$ is the identity is denoted $\Fib(\cat B)$.
\end{definition}

We conclude by recalling the dual of a fibration. It will be useful when presenting reverse differential categories as a `reverse' construction of (forward) differential categories.

\begin{definition}
    Let $q:\cat E \to \cat B$ be a (cloven) fibration.
    Its \textbf{dual} or \textbf{fibrewise opposite} $q^{\vee}:\cat E^{\vee} \to \cat B$ has as fibres the opposite of those of $q$.
    Explicitly, define $\cat E^{\vee}$ as the category with the same objects as $\cat E$, and maps $\cat E^{\vee}(X,Y)$ consisting of spans
    \begin{equation}
    \label{eq:dual-fib-spans}
        \begin{tikzcd}[ampersand replacement=\&, sep=scriptsize]
            \& {f^*Y} \\
            X \&\& Y
            \arrow["f", from=1-2, to=2-3]
            \arrow["{f^\sharp}"', from=1-2, to=2-1]
        \end{tikzcd}
    \end{equation}
    where $f$ is cartesian and $f^\sharp$ is vertical.
    The identity span has this form, and they compose by pullback (Remark~\ref{rmk:lifts-are-pbs}).
    Then $q^\vee$ sends the pair ${f^\sharp \choose f}$ down to $f$, and this completes the defintion.
\end{definition}

More details can be found in~\cite[Definition~1.10.11]{jacobs_categorical_1999}.

\begin{example}
    The fibration of simple lenses of~\cref{ex:lens_fib} is the opposite of the simple fibrations~\cref{ex:simple_fib}, as can be easily verified by comparing~\eqref{eq:simple-fib} and~\eqref{eq:lensfib}
\end{example}

\begin{example}[Dependent lenses aka polynomial functors aka containers]
    In~\cite{spivak_generalized_2019}, the authors extend this terminology to all opposite fibrations, calling the spans of~\eqref{eq:dual-fib-spans} \emph{generalised lenses}.
    For instance, the fibrewise opposite of the codomain fibration of $\Set$ yields so-called \emph{dependent lenses}, which are equivalent to polynomial functors over $\Set$~(\cite{gambino_polynomial_2013,spivak_poly_2020}) and containers~(\cite{abbott_containers_2005}).
    Notice all three concepts generalise in different directions.
\end{example}

For our developments, we will need a theory of fibrations which coheres with categorical structures such as products, CLA structure, bundles, etc.
The principled way to execute this programme is by defining fibrations in a more general setting, which is parametric to such structure.
As noted in~\cite{Street-fib2cat}, the definition of a fibration, usually refereed to categories, functors, and natural transformations, can in fact be stated in any 2-category.
In our cases of interest, namely the 2-categories $\CartMonCat$ (Definition~\ref{def:cartesiancat}), $\CLACat$ (\cref{def:CLA}), $\BundleCat$ (\cref{def:disp-map-cat}), fibrations are fibrations (i) whose domain and codomain possess the structure in question; and (ii) the fibration and cartesian lift preserve that structure.\footnote{This is due to the fact that in a 2-category $\dblcat K$ which admits comma objects, a fibration is a 1-cell $q:E \to B$ equipped with a 1-cell $\ell : B/q \to E$ which chooses cartesian lifts.
Hence if $\dblcat K$ is a 2-category of structured categories, both $q$ and $\ell$ are maps compatible with such structure. See~\cite{Street-fib2cat} for details.}

For example, for $\CartMonCat$ we have the following notion.

\begin{definition}[{\bf Fibration of Cartesian Categories}]
\label[definition]{def:fib-of-cart-cats} Let $\cat E$ and $\cat B$ be cartesian categories.
    A \textbf{fibration of cartesian categories} is a fibration (in the sense of~\cref{def:fibration}) $q: \cat E \to \cat B$ such that
    \begin{enumerate}
        \item $q$ preserves products,
        \item if $k, k'$ are cartesian lifts of maps $q(k)$, $q(k')$ in $\cat B$, then $k \times k'$ is the cartesian lift of $q(k) \times q(k')$.
    \end{enumerate}
\end{definition}

The following fact (see~\cite{hermida_properties_1999}) will help us prove that certain fibrations are cartesian fibrations, by relating \emph{global} products (taken in the total category) with \emph{local} ones (taken in a fibre).

\begin{lemma}
\label[lemma]{lem:fibprod-cartfib}
    Let $q: \cat E \to \cat B$ be a fibration (of categories) (Definition~\ref{def:fibredproducts}).
    If $\cat B$ is cartesian and $q$ has fibred products, then $\cat E$ is cartesian and $q$ is in fact a fibration of cartesian categories.
\end{lemma}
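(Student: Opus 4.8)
The plan is to build finite products in $\cat E$ out of the products in the base $\cat B$ together with the fibred products, and then read off both required properties of $q$ directly from the construction.

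First I would construct the terminal object. Let $1$ be a terminal object of $\cat B$ and let $\top$ be a terminal object of the fibre $\cat E_1$, which exists since each fibre has finite products, including the empty one. To see that $\top$ is terminal in $\cat E$, take any $Z \in \cat E$; the unique map $!: qZ \to 1$ in $\cat B$ has a cartesian lift $!^*\top \to \top$, and since reindexing preserves the empty product, $!^*\top$ is terminal in $\cat E_{qZ}$, so there is a unique vertical map $Z \to !^*\top$. Composing gives a map $Z \to \top$. Any map $Z \to \top$ factors as a vertical map followed by a cartesian one (Remark~\ref{rmk:lifts-are-pbs}); its cartesian part must be the lift of $!$ and its vertical part the unique map into the fibre-terminal object, which yields uniqueness.

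Next I would treat binary products. Given $X, Y \in \cat E$, form $qX \times qY$ in $\cat B$ with projections $\pi_0, \pi_1$, reindex to obtain $\pi_0^* X$ and $\pi_1^* Y$ in the common fibre $\cat E_{qX \times qY}$, and set $X \times Y := \pi_0^* X \times \pi_1^* Y$, the product taken in that fibre. The projections onto $X$ and $Y$ are the vertical fibre-projections post-composed with the cartesian lifts of $\pi_0, \pi_1$, and they lie over $\pi_0$ and $\pi_1$ respectively. To verify the universal property, given $f: Z \to X$ and $g: Z \to Y$, the only candidate for $q$ of the mediating map is $u := \langle qf, qg\rangle: qZ \to qX \times qY$. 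Reindexing along $u$ and using that reindexing preserves products gives $u^*(X \times Y) \cong (qf)^* X \times (qg)^* Y$ in $\cat E_{qZ}$. Factoring $f$ and $g$ through their cartesian lifts yields vertical maps $Z \to (qf)^* X$ and $Z \to (qg)^* Y$, hence a unique vertical map into the fibre product, which I compose with the cartesian lift of $u$ to get the mediating map $Z \to X \times Y$; commutation with the projections and uniqueness both reduce to the universal properties of the fibre product and of cartesian lifts. This establishes that $\cat E$ is cartesian, and the first clause of~\cref{def:fib-of-cart-cats} is immediate, since by construction $q(X \times Y) = qX \times qY$ with $q$ sending the chosen projections to $\pi_0, \pi_1$, and likewise $q$ preserves the terminal object. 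For the second clause, let $k, k'$ be cartesian lifts over $a = q(k)$ and $b = q(k')$, so that for the chosen cleavage their domains are $a^* X$ and $b^* Y$. Computing $(a \times b)^*(X \times Y)$ and commuting reindexing past the fibre product, using $\pi_0 \circ (a \times b) = a \circ \pi_0'$ and $\pi_1 \circ (a \times b) = b \circ \pi_1'$, identifies it with $\pi_0'^*(a^* X) \times \pi_1'^*(b^* Y)$, which is exactly the product $a^* X \times b^* Y$ in $\cat E$; under this identification the cartesian lift of $a \times b$ is precisely $k \times k'$, so the latter is cartesian, as required.

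I expect the main obstacle to be the bookkeeping in the last step: reindexing preserves products only up to the canonical coherent isomorphisms supplied by the cleavage, so the identifications $u^*(X \times Y) \cong (qf)^* X \times (qg)^* Y$ and $(a \times b)^*(X \times Y) \cong a^* X \times b^* Y$ must be tracked as pseudofunctorial isomorphisms rather than strict equalities, and one must check that they are compatible with the relevant projections. Everything else is a routine application of the vertical–cartesian factorization.
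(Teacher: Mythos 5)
Your construction is correct and is the standard argument underlying this fact: the paper itself gives no in-text proof of \cref{lem:fibprod-cartfib}, deferring instead to the cited reference, and your proof (terminal object as the fibre-terminal over $1$, binary products as fibre products of reindexings along base projections, with the universal property and the cartesianness of $k \times k'$ checked via the vertical--cartesian factorization) is precisely the proof one finds there. The caveat you flag about tracking the pseudofunctorial coherence isomorphisms of the cleavage is the only real bookkeeping, and you have identified it accurately.
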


\begin{example}
    When $\cat B$ has all finite limits, thus admitting both pullbackas and products, both $\cat B^\downarrow$ and $\cat B$ have finite products, and the codomain fibration $\cod:\cat B^\downarrow \to \cat B$ is a fibration of cartesian categories.
    In fact, clearly taking codomains preserve products strictly and products of pullback squares are still pullback squares (since limits commute with limits).
\end{example}

\begin{example}
    The simple fibration $\simple_{\cat B} : S(\cat B) \to \cat B$ associated to a category with finite products is a fibration of cartesian categories.
    Products in $S(\cat B)$ are computed `pointwise', meaning ${A' \choose A} \times {B' \times B} = {A' \times B' \choose A \times B}$, with projections onto, say, ${A' \choose A}$, given by:
    \begin{equation}
        {\pi_A' \choose \pi_A} : {A' \times B' \choose A \times B} \chartto {A' \choose A}
    \end{equation}
    Clearly $\simple_{\cat B}$ preserves products, and it's easy to verify the product of cartesian maps is cartesian.
\end{example}


Cartesian left-additive categories can also be extended to the fibrational world.
\begin{definition}[{\bf Fibration of CLA Categories}]
\label[definition]{def:fib-of-CLA-cats}
    A \textbf{fibration of CLA categories} is a fibration of cartesian categories $q: \cat E \to \cat B$ such that
    \begin{enumerate}
        \item $\cat E$ and $\cat B$ are CLA categories, and $q$ is a CLA functor,
        \item cartesian lifts of additive maps are additive,
        \item zero maps are cartesian, and the sum of cartesian lifts is still cartesian.
    \end{enumerate}
\end{definition}

\section{First-order Differential Structures}
\label{sec:fods}

In this section we expound how sections of certain fibrations can be used to model the `first-order' aspect of various differential structures. In this work we use the expression first-order in the analytic sense, indicating \emph{structure that can be defined without multiple applications of a differential combinator}.
So, for example, for cartesian differential categories, this refers to axioms CDC1-5 as introduced in \cite[\S 2.1]{blute_cartesian_2009}; for reverse derivative categories, this refers to RDC1-5, as introduced in \cite[\S 2.2]{cockett_reverse_2020}. Our exposition will highlight how various categorical structures for differentiation may be uniformly understood in this manner, culminating in the theory of first-order differential structures of~\cref{sec:general-fods}.

\subsection{Cartesian Differential Categories} \label{sec:CDCsections}

We first focus on the case of cartesian differential categories (CDCs), as introduced in~\cite{blute_cartesian_2009}. Note that, for this case, the fibrational treatment is not new, appearing already in~\cite[\S 2.4]{blute_cartesian_2009}. In the context of our exposition, it is instructive to recover it in steps. Our starting point is the simple fibration $\simple \colon S(\cat B) \to \cat B$ on a category $\cat B$, see Example~\ref{ex:simple_fib}. A \textbf{section} of such fibration is simply a functor $D \colon \cat B \to S(\cat B)$ which is a left inverse, that is, $\simple \circ D = \id_{\cat B}$. We say that $D$ is \textbf{stationary} whenever $DA = {A \choose A}$---which is a definition that only makes sense for sections of the simple fibration.

The data of a stationary section $D$ may be summarised as follows:
    \begin{equation}\label{eq:CDC}
        \begin{tikzcd}[ampersand replacement=\&,sep=scriptsize]
        	{S(\cat B)} \& {{A \choose A}} \& {{B \choose B}} \\
        	\\
        	{\cat B} \& A \& B
        	\arrow["{{\simple}}"', from=1-1, to=3-1]
        	\arrow["D"', curve={height=15pt}, dashed, from=3-1, to=1-1]
        	\arrow[""{name=0, anchor=center, inner sep=0}, "f", from=3-2, to=3-3]
        	\arrow["\delta f", shift left, from=1-2, to=1-3]
        	\arrow["f"', shift right, from=1-2, to=1-3]
        	\arrow[""{name=1p, anchor=center, inner sep=0}, phantom, from=1-2, to=1-3, start anchor=center, end anchor=center]
        	\arrow[shorten <=17pt, shorten >=17pt, maps to, from=0, to=1p]
        \end{tikzcd}
    \end{equation}

Now, let us assume $\cat B$ is a CLA category. The centrepiece of a CDC structure on $\cat B$ is an operator $\delta$ associating with any map $f \colon A \to B$ of $\cat B$ a map $\delta f \colon A \times A \to B$. Given $D$ as in~\eqref{eq:CDC}, we may readily define an operator $\delta$ of this type: its value on $f$ will be the second component of $Df$ (which we usually write above).

Conversely, suppose we are given an operator $\delta$ of the above type in $\cat B$. Then $\delta$ induces a stationary section $D \colon \cat B \to S(\cat B)$, defined as follows
\begin{equation*}
	    DA \ := \ {A \choose A} \qquad \qquad D(f : A \to A') \ := \ (f, \delta f) : {A \choose A} \to {A' \choose A'}
\end{equation*}
To form a CDC structure on $\cat B$, the operator $\delta$ also needs to satisfy certain axioms, listed as CDC1-7 in~\cite{blute_cartesian_2009}. As anticipated, we focus on the first-order axioms, namely CDC1-5. These five axioms are reported in equational form in the statement of Theorem~\ref{th:FODS-CDCs} below.

We shall discuss CDC1-5 step by step, via Lemmas below, expressing them as commutative diagrams in $\cat B$ to make their type more explicit and readable. Our goal is to show how each axiom on $\delta$ matches some property of the functor $D$, so that assuming the axiom holds amounts to assuming the functor has the property. The first lemma of this kind does not require any extra structure to be imposed on $D$.
\begin{lemma}\label{lemma:CDC1}
    A stationary section $D$ of the simple fibration $\simple \colon S(\cat B) \to \cat B$ as in~\eqref{eq:CDC} is exactly an operator $\delta$ making the diagrams $(\star)$ below commute for any $f \colon A \to B$ and $g \colon B \to C$ in $\cat B$:
    \begin{gather}
    	\xymatrix{ A \times A \ar@/^10pt/[rr]^{\delta \id_A} \ar@/_10pt/[rr]_{\pi_1} & {\scriptstyle (\star)} & A} \label{eq:CDC3lemma} \\
    	          \vcenter{
    \xymatrix{
C \ar@{}[drrr]|{(\star)} &&& \ar[lll]_{\delta g} \ar@/^/[drr]^{\pi_1} \ar@/^15pt/[drrr]^{\pi_0} B \times B & && \\
&& & \ar@/^15pt/[ulll]^{\delta (g \circ f)} A \times A \ar@/_15pt/[rrr]_{\delta f} \ar@{.>}[u]^{\langle \delta f, f \circ \pi_1\rangle} \ar[r]^{\pi_1} & A \ar[r]^{f} & B & B
}
}
 \label{eq:CDC5lemma}
    \end{gather}
\end{lemma}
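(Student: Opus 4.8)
The plan is to exploit the correspondence, already set up before the statement, between the bare data of a stationary section $D$ and an operator $\delta$: on objects $D$ is forced to be $A \mapsto \binom{A}{A}$, and since $\simple \circ D = \id_{\cat B}$ the bottom component of $Df$ must be $f$ itself, so the only free datum is the top component $\delta f \colon A \times A \to B$. A stationary section is thus precisely an operator $\delta$ together with a proof that the assignment $Df := \binom{\delta f}{f}$ is functorial. The whole content of the lemma is therefore to translate the two functor axioms---preservation of identities and of composition---into the two diagrams $(\star)$, and it suffices to treat them separately.

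For identities, I would first record the identity of the simple fibration on $\binom{A}{A}$. By \cref{ex:simple_fib} the cartesian lift of $\id_A$ has the form $\binom{\pi_1}{\id_A}$ with $\pi_1 \colon A \times A \to A$ the projection onto the fibre component, and being over $\id_A$ it is the identity of $\binom{A}{A}$. Hence $D(\id_A) = \binom{\delta\,\id_A}{\id_A}$ equals $\id_{\binom{A}{A}}$ exactly when $\delta\,\id_A = \pi_1$, which is the commuting triangle \eqref{eq:CDC3lemma}. As anticipated, no additive structure on $\cat B$ enters at this step.

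For composition, I would compute $Dg \circ Df$ with the composition law of \cref{ex:simple_fib}, taking $f \colon A \to B$, $g \colon B \to C$ and $f^\flat = \delta f$, $g^\flat = \delta g$. The formula returns a morphism whose bottom component is $g \circ f$---so it automatically agrees with that of $D(g \circ f)$---and whose top component is the composite $\delta g \circ (f \times \delta f) \circ (\Delta_A \times \id_A)$. Unwinding the diagonal and the projections rewrites this top component as $\delta g \circ \langle \delta f, f \circ \pi_1\rangle$. Therefore $D$ preserves composition iff $\delta(g \circ f) = \delta g \circ \langle \delta f, f \circ \pi_1\rangle$, which is exactly the commutativity asserted by \eqref{eq:CDC5lemma}; the two remaining faces of that diagram merely recover the components of the mediating map $\langle \delta f, f \circ \pi_1\rangle$ and hold by definition of the pairing. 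Combined with the identity case, this yields the stated equivalence.

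The one delicate point is this composition computation. The bookkeeping in the simple fibration's composite must be done carefully: one has to track which copy of $A$ produced by $\Delta_A$ feeds $f$ and which feeds $\delta f$, and match our ordering convention for base point versus direction so that the top component comes out as $\langle \delta f, f \circ \pi_1\rangle$ and not its coordinate-swapped variant. Once this is pinned down the argument is a routine diagram chase, and the identity case is immediate.
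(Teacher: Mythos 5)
Your proof is correct and follows essentially the same route as the paper's, which likewise reduces the two diagrams to the statement that $D$ preserves identities and composition, with \eqref{eq:CDC3lemma} being identity preservation and \eqref{eq:CDC5lemma} the chain rule coming from the composition law of the simple fibration. The one caveat—which you rightly flag as the delicate point—is that the composition law of \cref{ex:simple_fib} as written literally yields the top component $\delta g \circ \langle f \circ \pi_0, \delta f\rangle$, so arriving at the displayed form $\delta g \circ \langle \delta f, f \circ \pi_1\rangle$ requires exactly the coordinate-ordering bookkeeping you describe (a convention mismatch that is already present between the paper's own statement of the lemma and its definition of the simple fibration, not a defect of your argument).
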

\begin{proof}
    The proof just amounts to observing functoriality of $D$. Indeed, \eqref{eq:CDC3lemma} means exactly that $D$ maps identities in $\cat B$ to identities in $S (\cat B)$, while \eqref{eq:CDC5lemma} means exactly that $D$ preserves composition. That $D$ is a stationary section is by construction.
\end{proof}

Next, compatibility of $\delta$ with the left-additive structure of $\cat B$ requires that $D$ is also compatible with it.
\begin{lemma}\label{lemma:CDC2}
    Imposing commutativity of diagrams $(\star)$ below for any $f \colon A \to B$ and $g \colon A \to C$ in $\cat B$
    \begin{gather}
   \vcenter{
    \xymatrix{
   & B \times C \ar@/^5pt/[drrr]^{\pi_1} \ar@/^15pt/[drrrr]^{\pi_0} &&& & \\
    & \ar@/_15pt/[rrrr]_{\delta f} A \times A \ar@{{}{ }{}}@/^15pt/[u]|{(\star)}
     \ar@/^35pt/[u]^{\delta \langle f, g\rangle} \ar@{.>}[u]_{\langle \delta f, \delta g\rangle} \ar[rrr]^{\delta g} &&& C & B
    }
    }
 \label{eq:CDC4lemma}\\
\xymatrix{
A & \ar[l]^{\pi_A} A \times B &\ar[l]^</0.5cm/{\pi_1}
\ar@{{}{ }{}}@/_15pt/[ll]|{(\star)}
(A \times B) \times (A \times B)
\ar@{{}{ }{}}@/^15pt/[rr]|{(\star)}
\ar@/^25pt/[rr]^{\delta \pi_B} \ar@/_25pt/[ll]_{\delta \pi_A} \ar[r]_</0.4cm/{\pi_1} & A \times B \ar[r]_{\pi_B} & B
}
 \label{eq:CDC3lemma2}
 \end{gather}
    is equivalent to asking that $D$ preserves products.
\end{lemma}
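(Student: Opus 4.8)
The plan is to unfold the phrase ``$D$ preserves products'' into its three constituent requirements---preservation of the terminal object, of projections, and of binary pairings---and to match the last two against the two families of diagrams. Recall from \cref{ex:simple_fib} that products in $S(\cat B)$ are computed pointwise, with $DA \times DB = {A \choose A} \times {B \choose B} = {A \times B \choose A \times B}$; since $D$ is stationary this coincides on objects with $D(A \times B)$, and the terminal object ${1 \choose 1}$ is likewise hit by $D1$, so terminal preservation is automatic. It therefore remains to test whether the image under $D$ of a product cone is again a product cone, and whether $D$ commutes with pairing; both become purely equational conditions once the flat components are spelled out.

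First I would treat \eqref{eq:CDC3lemma2}. The canonical projection of the pointwise product ${A \times B \choose A \times B}$ onto ${A \choose A}$ has base component $\pi_A$ and flat component $\pi_A \circ \pi_1$, where $\pi_1 \colon (A \times B) \times (A \times B) \to A \times B$ is the second product projection; symmetrically for the projection onto ${B \choose B}$. On the other hand, since $D$ is a section we have $D\pi_A = {\delta \pi_A \choose \pi_A}$ and $D\pi_B = {\delta \pi_B \choose \pi_B}$, so these already carry the correct base components. The two triangles marked $(\star)$ in \eqref{eq:CDC3lemma2} assert exactly $\delta \pi_A = \pi_A \circ \pi_1$ and $\delta \pi_B = \pi_B \circ \pi_1$, i.e.\ that the flat components agree too. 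Hence \eqref{eq:CDC3lemma2} holds precisely when $D\pi_A$ and $D\pi_B$ are the chosen projections, in which case $\bigl( D\pi_A, D\pi_B \bigr)$ is manifestly a product cone.

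Next I would treat \eqref{eq:CDC4lemma}. For $f \colon A \to B$ and $g \colon A \to C$, the pairing $\langle Df, Dg\rangle \colon {A \choose A} \to {B \times C \choose B \times C}$ has base component $\langle f, g\rangle$ and flat component $\langle \delta f, \delta g\rangle$, whereas $D\langle f, g\rangle = {\delta\langle f,g\rangle \choose \langle f,g\rangle}$. The two triangles of \eqref{eq:CDC4lemma} say precisely that post-composing $\delta\langle f,g\rangle$ with the two projections of $B \times C$ returns $\delta f$ and $\delta g$, which by the universal property of the product is equivalent to $\delta\langle f,g\rangle = \langle \delta f, \delta g\rangle$, i.e.\ to $D\langle f,g\rangle = \langle Df, Dg\rangle$. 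Thus \eqref{eq:CDC4lemma} is exactly preservation of pairings. Assembling everything, a functor between cartesian categories preserves products iff it sends the chosen projections to projections and preserves pairings, so the conjunction of the two diagrams is equivalent to $D$ preserving products. I would also remark that, given the functoriality of $D$ already established in \cref{lemma:CDC1}, each of the two conditions is recoverable from the other---for instance, applying preservation of pairings to $\id_{A \times B} = \langle \pi_A, \pi_B\rangle$ and using $D\id = \id$ forces \eqref{eq:CDC3lemma2}---so the two diagrams are mutually derivable and the argument can be presented economically.

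The step I expect to require the most care is the bookkeeping of flat components in the simple fibration: one must correctly read off the chosen projections and pairings of the pointwise product from \cref{ex:simple_fib}, and in particular keep track of which copy of $A \times B$ the structural projection $\pi_1$ refers to, so that the composites $\pi_A \circ \pi_1$ and $\langle \delta f, \delta g\rangle$ occurring in the diagrams are matched against the genuine product structure of $S(\cat B)$ rather than against superficially similar maps in $\cat B$. The fact that $D$ is a section, which pins the base components of $D\pi_A, D\pi_B$ to the honest projections, is what makes the remaining comparison purely vertical and hence reduces everything to the flat-component equalities displayed in the two $(\star)$ diagrams.
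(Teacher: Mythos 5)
Your proof is correct and takes essentially the same route as the paper, whose entire proof is ``Immediate from the definition of products in $S(\cat B)$'': you simply spell out the pointwise product structure of $S(\cat B)$ and match flat components, identifying \eqref{eq:CDC3lemma2} with preservation of the canonical projections and \eqref{eq:CDC4lemma} with preservation of pairings. The extra observation that the two conditions are interderivable via functoriality is a correct bonus, not a divergence.
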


\begin{proof}
    Immediate from the definition of products in $S(\cat B)$.
\end{proof}

\begin{lemma}
    Imposing commutativity of diagrams $(\star)$ below for any $f,g \colon A \to B$ in $\cat B$
    \begin{equation}
    \xymatrix{
    A \times A \ar@/^15pt/[rr]^{\delta f + \delta g}
    \ar@{{}{ }{}}@/^0pt/[rr]|{(\star)}
    \ar@/_15pt/[rr]_{\delta (f + g)}
 && B
    } \qquad
    \xymatrix{
    A \times A \ar@/^15pt/[rr]^{\delta 0_{A,B}}
    \ar@{{}{ }{}}@/^0pt/[rr]|{(\star)}
    \ar@/_15pt/[rr]_{0_{A \times A, B}} &&  B
    }\label{eq:CDC1lemma}
    \end{equation}
     is equivalent to asking that $D$ is not merely a product preserving functor but also a CLA functor.
\end{lemma}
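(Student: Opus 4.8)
The plan is to rewrite the two diagrams $(\star)$ as the single statement that $D$ preserves sums and zeros of morphisms, and then to identify this with being a CLA functor via the characterisation of CLA functors among product-preserving ones. Throughout I work under the standing hypothesis, secured by \cref{lemma:CDC2}, that $D$ is already a product-preserving functor; since a CLA functor is by definition product-preserving together with preservation of the chosen monoids, only the latter is genuinely new content.

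First I would recall the cartesian left-additive structure on $S(\cat B)$ that makes $\simple$ a fibration of CLA categories (\cref{def:fib-of-CLA-cats}): both sums and zeros of morphisms are computed componentwise, i.e. ${f^\flat \choose f} + {g^\flat \choose g} = {f^\flat + g^\flat \choose f+g}$, and the zero morphism ${A \choose A} \to {B \choose B}$ is ${0_{A\times A, B} \choose 0_{A,B}}$. This is a direct check from the composition and pairing formulas of \cref{ex:simple_fib}, the only point to verify being that the fibrewise addition on the object ${B' \choose B}$ acts by $+_{B'}$ on the upper component and by $+_B$ on the lower.

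With this description in hand I would unfold the two candidate equalities $D(f+g) = Df + Dg$ and $D(0_{A,B}) = 0_{DA, DB}$. Since $D$ is stationary, $DA = {A \choose A}$ and $Df = {\delta f \choose f}$, so $D(f+g) = {\delta(f+g) \choose f+g}$ whereas $Df + Dg = {\delta f + \delta g \choose f+g}$; the lower (base) components agree automatically, hence the two morphisms coincide exactly when $\delta(f+g) = \delta f + \delta g$, which is the left diagram $(\star)$. The same unfolding turns $D(0_{A,B}) = 0_{DA,DB}$ into $\delta 0_{A,B} = 0_{A\times A, B}$, the right diagram. Thus the pair of diagrams holds for all $f,g$ precisely when $D$ preserves all sums and zeros of morphisms.

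It remains to identify this with being a CLA functor. One direction is the remark following \cref{def:CLA}: every CLA functor satisfies $F(f+g)=Ff+Fg$ and $F0=0$, which by the componentwise unfolding yields both diagrams. For the converse I would use that in any CLA the chosen addition is recovered as $+_X = \pi_0 + \pi_1 \colon X\times X \to X$ and the chosen zero as the zero morphism $0_{1,X}\colon 1 \to X$; since $D$ preserves products, hence projections and the terminal object, preservation of sums and zeros of morphisms gives $D(+_X) = D\pi_0 + D\pi_1 = \pi_0 + \pi_1 = +_{DX}$ and $D(0_{1,X}) = 0_{1,DX} = 0_{DX}$, so $D$ preserves the chosen commutative monoids and is therefore a CLA functor. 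Chaining the two equivalences proves the claim. The only genuinely delicate point is pinning down the componentwise CLA structure on $S(\cat B)$ in the second paragraph; everything afterwards is mechanical unfolding, and it is precisely the recovery $+_X = \pi_0 + \pi_1$ that lets the hom-set equations be upgraded to preservation of the chosen monoid objects.
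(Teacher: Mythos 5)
Your proof is correct and follows the same route as the paper, which simply declares the equivalence immediate from the definition of a CLA functor; you have just unfolded that definition explicitly (componentwise CLA structure on $S(\cat B)$, the recovery $+_X = \pi_0 + \pi_1$, and the hom-set equations). The extra detail is sound but not a different argument.
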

\begin{proof}
    Immediate from the definition of CLA functor (Section~\ref{sec:cart-add-structs}).
\end{proof}

The purpose of these lemmas is to show how different axioms on $\delta$ correspond to requirements on $D$. Indeed, CDC1 is \eqref{eq:CDC1lemma}, CDC3 is given by \eqref{eq:CDC3lemma} and \eqref{eq:CDC3lemma2}, CDC4 is \eqref{eq:CDC4lemma}, and CDC5 is \eqref{eq:CDC5lemma}.

This leaves out a single axiom of CDCs, listed as CDC2 in~\cite{blute_cartesian_2009} and reported as~\eqref{eq:CDC2lemma} below, which requires $\delta f$ to be additive in its second component. In order to identify an analogue condition on $D$, we need a refinement of the simple fibration. It will amount to ensure that all maps in a fibre are additive in their second component by integrating the chosen monoids of a CLA $\cat B$ into the total category of a fibration of CLA categories. This is called \emph{the} additive bundle fibration in~\cite[\S 1.5]{blute_cartesian_2009}. Because it arises as a refinement of the simple fibration, we call it the simple CLA fibration. 
\begin{definition}[{\bf simple CLA fibration}]
\label{ex:add_cla_fib}
    Let $\cat B$ be a CLA category.
    Define $S_\CLA(\cat B)$ as having the same objects as $S(\cat B)$.
    Its maps are pairs ${f' \choose f} : {A' \choose A} \chartto {B' \choose B}$ where $f:A \to B$ is any map in $\cat B$ and $f':A \times A' \to B'$ additive in its second component, meaning for every $a:X \to A$, $v,w:X \to A'$, we have
    \begin{equation}
        f' \circ \langle a,h+k\rangle = \left( f' \circ \langle a,h \rangle \right) + \left( f' \circ \langle a,k \rangle \right) \qquad f' \circ \langle a,0\rangle = 0
    \end{equation}
    The \textbf{simple CLA fibration} $\simple_\CLA: S_\CLA(\cat B) \to \cat B$ over $\cat B$ is defined by projecting ${A' \choose A} \mapsto A$.
\end{definition}

CDC2, reported in~\eqref{eq:CDC2lemma} below, can now be explained in terms of sections by requiring that $D$ should not return maps in the total category of $S(\cat B)$ but rather maps in $S_\CLA(\cat B)$.

\begin{lemma}
    Imposing commutativity of diagrams $(\star)$ below for any $v,h,k \colon A \to B$ and $f \colon B \to C$ in $\cat B$
    \begin{equation}
       \begin{aligned}
       \vcenter{
    \xymatrix{
    & C &&& \\
   & B \times B \ar[u]^{\delta f} \ar@/^5pt/[drrr]^{\pi_1} \ar@/^15pt/[drrrr]^{\pi_0} &&& & \\
    & \ar@/_15pt/[rrrr]_{v} A \ar@{{}{ }{}}@/^22pt/[uu]|{(\star)}
     \ar@/^35pt/[uu]^{\left( \delta f \circ \langle v,h\rangle \right) + \left( \delta f \circ \langle v,k\rangle \right)} \ar@{.>}[u]_{\langle v, h+k\rangle} \ar[rrr]^{h+k} &&& B & B
    }
    }
    \\
           \vcenter{
    \xymatrix{
    & C &&& \\
   & B \times B \ar[u]^{\delta f} \ar@/^5pt/[drrr]^{\pi_1} \ar@/^15pt/[drrrr]^{\pi_0} &&& & \\
    & \ar@/_15pt/[rrrr]_{v} A \ar@{{}{ }{}}@/^22pt/[uu]|{(\star)}
     \ar@/^35pt/[uu]^{0_{A,C}} \ar@{.>}[u]_{\langle v, 0_{A,B}\rangle} \ar[rrr]^{0_{A,B}} &&& B & B
    }
    }
 \label{eq:CDC2lemma}
 \end{aligned}
    \end{equation}
     is equivalent to asking that $D$ restricts to a section of the simple CLA fibration $\simple_\CLA: S_\CLA(\cat B) \to \cat B$.
\end{lemma}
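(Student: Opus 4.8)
The plan is to reduce the claim to a direct comparison of definitions, exactly in the style of the preceding lemmas. Since $D$ is already a section of the plain simple fibration $\simple$ and, by Definition~\ref{ex:add_cla_fib}, the categories $S_\CLA(\cat B)$ and $S(\cat B)$ have the same objects (with $S_\CLA(\cat B) \hookrightarrow S(\cat B)$ a subcategory inclusion that agrees on identities and composition), the functor $D$ corestricts to a section of $\simple_\CLA$ if and only if every morphism in its image already lands in $S_\CLA(\cat B)$. Concretely, for each $f \colon A \to A'$ the value $Df = {\delta f \choose f} \colon {A \choose A} \chartto {A' \choose A'}$ is a morphism of $S_\CLA(\cat B)$ precisely when its fibre component $\delta f \colon A \times A \to A'$ is additive in its second component. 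So the whole statement collapses to: CDC2 holds for all $f$ $\iff$ each $\delta f$ is additive in its second component.

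Next I would unfold the notion of \emph{additive in the second component} from Definition~\ref{ex:add_cla_fib}: $\delta f$ is such exactly when, for all generalised points $a \colon X \to A$ and all $h,k \colon X \to A$, one has $\delta f \circ \langle a, h+k\rangle = (\delta f \circ \langle a,h\rangle) + (\delta f \circ \langle a,k\rangle)$ together with $\delta f \circ \langle a, 0\rangle = 0$. The final step is to observe that these are exactly the two equations asserted by the commuting diagrams $(\star)$ in~\eqref{eq:CDC2lemma}: reading those diagrams with $f \colon B \to C$, the bottom object $A$ as the test object, the map $v$ as the point $a$, and $h,k$ as the varying second argument, the left-hand curved composite $(\delta f \circ \langle v,h\rangle) + (\delta f \circ \langle v,k\rangle)$ is forced to coincide with the value $\delta f \circ \langle v, h+k\rangle$ of the dotted map $\langle v, h+k\rangle$ postcomposed with $\delta f$ in the first diagram, and $\delta f \circ \langle v, 0_{A,B}\rangle = 0_{A,C}$ in the second. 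Quantifying over all $f$ and all $v,h,k$ reproduces precisely the fibre-wise additivity condition above.

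The only point needing care---and the closest thing to an obstacle---is the matching of quantifiers. The diagrams of~\eqref{eq:CDC2lemma} range over all maps $v,h,k \colon A \to B$ with $A$ arbitrary, whereas Definition~\ref{ex:add_cla_fib} phrases additivity via arbitrary generalised points out of an arbitrary test object. Checking that ranging over all source objects and all such maps exhausts both conditions identically is a pure relabelling, with no categorical content beyond noting that the two families of equations agree term by term (the sum diagram against the $h+k$ clause, the zero diagram against the $0$ clause). Functoriality of the corestricted $D$ then requires nothing further, since $S_\CLA(\cat B)$ is a subcategory of $S(\cat B)$ on which composition and identities are inherited; hence the stated equivalence follows.
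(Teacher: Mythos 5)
Your proof is correct and takes essentially the same route as the paper's (one-line) proof: both reduce the claim to the observation that $Df$ lands in $S_\CLA(\cat B)$ if and only if $\delta f$ is additive in its second component, which is by definition exactly what the diagrams~\eqref{eq:CDC2lemma} assert. Your extra care about quantifier matching and functoriality of the corestriction is a harmless elaboration of the same point.
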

\begin{proof}
   The requirement on $\delta f$ imposed by the restriction to $S_\CLA(\cat B)$ amounts precisely to asking that is additive in its second component, i.e.~Equation~\eqref{eq:CDC2lemma}.
\end{proof}

We can now summarise our discussion via the following theorem, stating the equivalence between the first-order axioms of a CDC and the existence of a section with the desired properties.

\begin{therm}
\label[theorem]{th:FODS-CDCs}
    Let $\cat B$ be a CLA category.
    Let $D : \cat B \to S_\CLA(\cat B)$ be a stationary CLA section of the CLA simple fibration $\simple_\CLA: S_\CLA(\cat B) \to \cat B$:
    \begin{equation}\label{eq:thmCDC}
        \begin{tikzcd}[ampersand replacement=\&,sep=scriptsize]
        	{S_\CLA(\cat B)} \& {{A \choose A}} \& {{B \choose B}} \\
        	\\
        	{\cat B} \& A \& B
        	\arrow["{{\simple_\CLA}}"', from=1-1, to=3-1]
        	\arrow["D"', curve={height=15pt}, dashed, from=3-1, to=1-1]
        	\arrow[""{name=0, anchor=center, inner sep=0}, "f", from=3-2, to=3-3]
        	\arrow["\delta f", shift left, from=1-2, to=1-3]
        	\arrow["f"', shift right, from=1-2, to=1-3]
        	\arrow[""{name=1p, anchor=center, inner sep=0}, phantom, from=1-2, to=1-3, start anchor=center, end anchor=center]
        	\arrow[shorten <=17pt, shorten >=17pt, maps to, from=0, to=1p]
        \end{tikzcd}
    \end{equation}
    The existence of $D$ as in \eqref{eq:thmCDC} is equivalent to imposing a first-order CDC structure on $\cat B$, that is, the operator $\delta$ satisfies the following axioms    \begin{description}
        \item[CDC.1] $\delta$ preserves the left-additive structure on $\cat B$:
        \begin{equation}
        \label{eq:cdc1}
            \delta (f+g) = \delta f + \delta g \qquad \qquad \delta 0 = 0
        \end{equation}
        \item[CDC.2] Given $f \colon A\to B$, $\delta f \colon A \times A \to B$ is additive in its second component:
             \begin{equation}
         \label{eq:cdc2}
    	            \delta f \circ \langle a,h+k\rangle = \left( \delta f \circ \langle a,h \rangle \right) + \left( \delta f \circ \langle a,k \rangle \right) \qquad \delta f \circ \langle a,0\rangle = 0
        \end{equation}
        \item[CDC.3] $\delta$ preserves identities and projections: $\delta(\id_A) = \pi_1$, and if $\pi_A \colon A \times B \to A$ and $\pi_B \colon A \times B \to B$ are the projections, then for $\delta \pi_A \colon (A\times B) \times (A \times B) \to A$ it holds that
        \begin{equation}\label{eq:CDC3}
           \delta \pi_A = \pi_A \circ \pi_1 \qquad \qquad \delta\pi_B = \pi_B \circ \pi_1
        \end{equation}
        \item[CDC.4] $\delta$ preserves pairings:
        \begin{equation}
            \delta\langle f,g \rangle = \langle \delta f, \delta g \rangle
        \end{equation}
        \item[CDC.5] $\delta$ satisfies the chain rule: if $A \nto{f} B \nto{g} C$ are composable morphisms in $\cat B$, we have:
        \begin{equation}
           \delta (g \circ f) = \delta g \circ \langle \delta f, f \circ \pi_1  \rangle
        \end{equation}
    \end{description}
\end{therm}

\begin{remark} The abstract treatment of CDCs as sections of the simple fibration not only explains CDC axioms via fundamental categorical properties, such as functoriality and product preservation, but also suggests a reformulation of those axioms. Indeed, Lemmas~\ref{lemma:CDC1} and \ref{lemma:CDC2} indicate that~\eqref{eq:CDC3lemma} conceptually belongs to CDC5 rather than CDC3, as part of a functoriality requirement. Also, the remaining part of CDC3 should go with CDC4, to express a product preservation requirement.
\end{remark}

\subsection{Generalised Cartesian Differential Categories}
In the previous section we reformulated the first-order axioms of CDCs as properties of a stationary section $D$ of the simple CLA fibration.
Stationary meant the action of $D$ on an object is defined by $DA = {A \choose A}$. Thus implicit in this model of differentiation is that the type of the input of a function (first component) is the same as the type of the tangent vectors (second component). However, as argued in~\cite{cruttwell_cartesian_2017}, this is very rarely the case. \textbf{Generalised CDCs} (gCDCs) take various steps to fix this problem (see~\cite{cruttwell_cartesian_2017} for detailed motivation).

The first crucial different between gCDCs and CDCs is the absence of CLA structures for the first. The commutative monoids which give us the additive structure on spaces of tangent vectors are directly chosen by the derivative functor instead of being baked in the structure of the spaces themselves.

In order to characterise gCDCs, we need a slightly different recipe than with CDCs. We can no longer assume objects of the base category ${\cat B}$ have a chosen commutative monoid structure, but rather let the monoids come from the space where the tangent vectors live, i.e.~the total category $S(\cat B)$. The resulting notion is called additive simple fibration. 

\begin{definition}[{\bf Additive Simple Fibration}]
\label{ex:add_simple_fib}
Given a category ${\cat B}$ with finite products, define $S^+(\cat B)$ as the category whose objects are pairs ${B' \choose B}$, with $B$ an object of $\cat B$ and $B'$ a commutative monoid in the fibre $S(\cat B)_B$  (where $(S(\cat B)$ is defined in Example~\ref{ex:simple_fib}). We write
    \begin{equation}
        0^{B} : B \to B', \quad +^{B} : B \times B' \times B' \to B'.
    \end{equation}
    for the monoid structure.

      A map in $S^+(\cat B)$ is a map ${f' \choose f} : {A' \choose A} \chartto {B' \choose B}$ in $S(\cat B)$, except now $f'$ is asked to preserve the monoid structures; that is, the following diagrams commute:

    \begin{equation}
    \label{eq:fibre_linearity}
        \begin{tikzcd}[ampersand replacement=\&,sep=scriptsize]
            A \& {B} \& \&  A \times A' \times A' \& A \times A' \\
            {A \times A'} \& {B'} \& \&  B \times B' \times B' \& B' \\
            \arrow["{f'}", from=2-1, to=2-2]
            \arrow["f", from=1-1, to=1-2]
            \arrow[" {0^B}", from=1-2, to=2-2]
            \arrow[" {\<\id_A,0^A\>} "', from=1-1, to=2-1]
            \arrow["{+^{A}}", from=1-4, to=1-5]
            \arrow["{f'_2}", from=1-4, to=2-4]
            \arrow["{+^{B}}", from=2-4, to=2-5]
            \arrow["{f'}", from=1-5, to=2-5]
        \end{tikzcd}
    \end{equation}
where $f'_2 = \<f \circ \pi_0, f' \circ \<\pi_0, \pi_1\>, f' \circ \<\pi_0, \pi_2\>\>$.  The \textbf{additive simple fibration} $\simple^+ \colon S^+(\cat B) \to {\cat B}$ is defined as a functor like the simple fibration (Definition~\ref{ex:simple_fib}). 

\end{definition}

\begin{remark}
    The additive fibration simple considers all commutative monoids in the fibers of $S(\cat B)$. However, if $\cat B$ already had commutative monoids, by being a CLA, we could consider only those chosen ones.
    More formally, for a given CLA category $\cat B$, the fibration of CLA categories given in Definition~\ref{ex:add_cla_fib} is a subfibration of the additive simple fibration of Definition~\ref{ex:add_simple_fib}, namely the one spanned by picking in $S^+(\cat B)$ only the monoids that are chosen by the CLA structure on $\cat B$.
\end{remark}


Most of the structure of a generalised CDC can be captured very cleanly by specifying a section of the additive simple fibration. Apart from the `type-level' changes, generalised CDCs are pretty much the same as CDCs, so we do not linger on each axiom anymore and go straight to the formulation of the characterisation theorem. For the sake of uniformity with the notation of the previous subsection, we write $\lambda$ for the differential operator of gCDCs. Also, we use the same axiom names as in~\cite{cruttwell_cartesian_2017}.

\begin{therm}
\label[theorem]{th:FODS-gCDCs}
    Let $\cat B$ be a cartesian category.
    Let $L : \cat B \to S^+(\cat B)$ be a product-preserving section of the additive simple fibration:
    \begin{equation}\label{eq:thmgCDC}
        \begin{tikzcd}[ampersand replacement=\&,sep=scriptsize]
        	{S^+(\cat B)} \& {{\lambda A \choose A}} \& {{\lambda B \choose B}} \\
        	\\
        	{\cat B} \& A \& B
        	\arrow["{{\simple^+_{\cat B}}}"', from=1-1, to=3-1]
        	\arrow["L"', curve={height=15pt}, dashed, from=3-1, to=1-1]
        	\arrow[""{name=0, anchor=center, inner sep=0}, "f", from=3-2, to=3-3]
        	\arrow["\lambda f", shift left, from=1-2, to=1-3]
        	\arrow["f"', shift right, from=1-2, to=1-3]
        	\arrow[""{name=1p, anchor=center, inner sep=0}, phantom, from=1-2, to=1-3, start anchor=center, end anchor=center]
        	\arrow[shorten <=17pt, shorten >=17pt, maps to, from=0, to=1p]
        \end{tikzcd}
        \quad \text{where }\ \lambda f : A \times \lambda A \to \lambda B.
    \end{equation}
    The existence of $L$ as in~\eqref{eq:thmgCDC} is equivalent to imposing a first-order generalised CDC structure on $\cat B$, namely, for each object $A$, a commutative monoid $(\lambda A, +^{A}, 0^{A})$, and for each map $f: A \to B$, a map $\lambda f: A \times \lambda A \to \lambda B$, such that
    \begin{description}
        \item[gCDC.1] $\lambda$ preserves the commutative monoid structure on each monoid $\lambda X$:
        \begin{equation}
        \label{eq:cd1}
            \lambda(+^A) = +^{\lambda A} \circ \pi_1, \quad \lambda(0^A) = 0^{\lambda A} \circ \pi_1;
        \end{equation}
        \item[gCDC.2] for each map $f:A \to B$, $\lambda f : A \times LA \to LB$ is a map of monoids in the second component (as in Equation \ref{eq:fibre_linearity})
        \item[gCDC.3] $\lambda$ preserves identities and projections: $\lambda(\id_A) = \pi_1$, and if $\pi_A :A \times B \to A$ is a projection, we have
        \begin{equation}
        \label{eq:cd3}
            \lambda(\pi_A) = \pi_A \circ \pi_1
        \end{equation}
        (and similarly for $\pi_B$) and in case $A=1$, this also means that $\lambda(!_B) = \pi_1$,
        \item[gCDC.4] $\lambda$ preserves pairings:
        \begin{equation}
            \lambda\langle f,g \rangle = \langle \lambda f, \lambda g \rangle,
        \end{equation}
        \item[gCDC.5] $\lambda$ satisfies the chain rule: if $A \nto{f} B \nto{g} C$ are composable morphisms in $\cat B$, we have:
        \begin{equation}\label{eqn:gcdc_composition}
        \lambda(g \circ f) = \lambda(g) \circ \<f \circ \pi_0, \lambda(f)\>
        \end{equation}
    \end{description}
\end{therm}

\begin{proof}
    Axiom \textbf{gCDC.2} holds by definition of $S^+(\cat B)$.
    \textbf{gCDC.1}, the projection-preservation part of \textbf{gCDC.3}, and \textbf{gCDC.4} hold because $L$ is product-preserving. Specifically, the right hand sides of~\eqref{eq:cd1} are the commutative monoid structures on $LA$ presented as maps in $S^+(\cat B)$, and the right hand side of~\eqref{eq:cd3} is what a product projection looks like in the same category.
    Finally, identity preservation and \textbf{gCDC.5} holds by functoriality of $L$ and because the right hand side of \ref{eqn:gcdc_composition} is the composition rule of $S^+(\cat B)$.
\end{proof}

As with the other differential structures considered, our characterisation leaves out  the higher-order aspects. In the case of generalised CDCs, this includes the axiom ``$\lambda (\lambda A) = \lambda A$'', which does not even typecheck  when we see $L$ as a section.

\subsection{Reverse Derivative Categories}
\label{sec:rdcs}
In machine learning (see~\cite{griewank2012invented}), `reverse derivatives' are what is known in geometry as `pullback of differential 1-forms', which is the action on smooth maps of the cotangent functor.
Functoriality of the cotangent bundle construction is not as well-known as that of tangent bundles because it is hard to state it without adopting a more abstract point of view.
The fibrational point of view does exactly this, highlighting how reverse derivatives arise via sections of the dual fibration.
We will see this in the case of \emph{reverse cartesian differential  categories}, which were introduced in~\cite{cockett_reverse_2020} as a reverse counterpart of CDCs. We show their first-order structure is exactly a section of the dual fibration of the simple CLA fibration which we call the lens CLA fibration. We start by recalling the lens fibration $\lens_{\cat B} : L(\cat B) \to \cat B$ of Section~\ref{sec:fibs} which, as remarked there, arises as the dual of the simple fibration, that is, $\lens_{\cat B} = \simple_{\cat B}^{\vee}$. In order to take account of the CLA-structure, we mirror the treatment of CDCs, with the following definition.

\begin{definition}[{\bf Lens CLA fibration}]
    Let $\cat B$ be a CLA category.
    The \textbf{lens CLA fibration} $\lens_{CLA}(\cat B)$ is the dual of the simple CLA fibration $\simple_{\CLA}(\cat B)$.
   That is, define $L_{\CLA}(\cat B) := S_{\CLA}(\cat B)^{\vee}$ and
    \begin{equation}
        \lens_{CLA}(\cat B) := \simple_{\CLA}(\cat B)^{\vee} \colon L_{\CLA}(\cat B) \to \cat B
    \end{equation}
    Concretely, an object in $L_{\CLA}(\cat B)$ is a pair ${A' \choose A}$, and a map ${A' \choose A} \to {B' \choose B}$ is a pair ${f^\sharp \choose f}$ where $f: A \to B$ and $f^\sharp: A \times B' \to A'$ is additive in its second component.
\end{definition}

For this definition to be useful, we need the additive lens fibration to be a CLA fibration with fibred biproducts. This we prove next.

\begin{lemma}
\label[lemma]{lemma:dual-fibred-biprod}
    Let $\cat B$ be a cartesian category.
    If $q:\cat E \to \cat B$ is a fibration of cartesian categories with fibred biproducts, then so is $q^{\vee}:\cat E^{\vee} \to \cat B$.
\end{lemma}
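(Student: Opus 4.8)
The plan is to verify the two defining clauses of \cref{def:fibredproducts} for $q^\vee$ --- namely that each fibre has biproducts and that reindexing preserves them --- and then to upgrade this to the full statement (that $q^\vee$ is a fibration of cartesian categories) by a single appeal to \cref{lem:fibprod-cartfib}. Recall first that the dual of a fibration is again a fibration (cf.~\cite[Definition~1.10.11]{jacobs_categorical_1999}), so $q^\vee : \cat E^\vee \to \cat B$ is a fibration over the same cartesian base; it therefore remains only to supply the fibred biproducts.

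First I would treat the fibres. By construction the fibre of $q^\vee$ over an object $B$ is the opposite $(\cat E_B)^{\op}$ of the corresponding fibre of $q$, which carries biproducts by hypothesis. The key observation is that \emph{having finite biproducts is a self-dual property}: a zero object is simultaneously initial and terminal, and a binary biproduct $A \oplus B$, described by projections $p_A, p_B$, injections $j_A, j_B$, and the equations $p_A j_A = \id$, $p_B j_B = \id$, $p_A j_B = 0$, $p_B j_A = 0$, $j_A p_A + j_B p_B = \id$, is carried to a biproduct in the opposite category simply by interchanging the roles of projections and injections, since this collection of equations is invariant under that interchange. Hence each $(\cat E_B)^{\op}$ has biproducts, living on the same objects as those of $\cat E_B$.

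Next, reindexing. The crux is to identify the reindexing functor of $q^\vee$ along a map $u : A \to B$ in $\cat B$ with the opposite $(u^*)^{\op}$ of the reindexing functor of $q$. Unwinding the definition of $\cat E^\vee$, the cartesian lift of $u$ at $Y$ in $q^\vee$ is the span whose vertical leg is the identity on $u^*Y$ and whose cartesian leg is the $q$-cartesian lift $u_Y : u^*Y \to Y$; so on objects the two reindexings coincide, with $u^*Y$ computed exactly as in $q$. On a vertical morphism of $\cat E^\vee$ over $B$ --- that is, a morphism $Y' \to Y$ of $\cat E_B$ read backwards --- reindexing in $q^\vee$ is computed by the composition-of-spans (pullback) of \cref{rmk:lifts-are-pbs}, and this reduces to the $q$-reindexing $u^*$ applied to that morphism, again read backwards. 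Thus $u^*_{q^\vee} = (u^*_q)^{\op}$. I expect this unwinding --- checking that composition-by-pullback in $\cat E^\vee$ genuinely reproduces $u^*$ on the reversed vertical maps --- to be the main technical obstacle, as it is the one place where the mechanics of the dual fibration must be handled with care.

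Granting this identification, preservation is then automatic. Since $q$ has fibred biproducts, $u^*_q$ preserves the biproduct object $A \oplus B$, the zero object, and all the structure maps; and because biproducts are self-dual in the sense made precise above, the opposite functor $(u^*_q)^{\op}$ preserves the (same) biproducts in the opposite fibres. Hence $q^\vee$ has fibred biproducts, and in particular fibred products. Finally, as $\cat B$ is cartesian and $q^\vee$ is a fibration with fibred products, \cref{lem:fibprod-cartfib} yields that $\cat E^\vee$ is cartesian and that $q^\vee$ is in fact a fibration of cartesian categories, which completes the proof.
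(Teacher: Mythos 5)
Your proof is correct and follows essentially the same route as the paper's: biproducts are self-dual, so each fibre $(\cat E_B)^{\op}$ inherits them, and \cref{lem:fibprod-cartfib} then upgrades a fibration with fibred (bi)products over a cartesian base to a fibration of cartesian categories. The only difference is that you explicitly identify the reindexing of $q^\vee$ with $(u^*)^{\op}$ and check it preserves biproducts, a step the paper's two-line proof leaves implicit.
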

\begin{proof}
    Each fibre of $q^\vee$ has fibred biproducts since the opposite of a category with biproducts also has biproducts.
    Since $\cat B$ has products, we can conclude by~\cref{lem:fibprod-cartfib} that $q^\vee$ is a fibration of cartesian categories.
\end{proof}

\begin{lemma}
\label[lemma]{lemma:dual-of-CLA fib}
    If $q:\cat E \to \cat B$ is a fibration of CLA categories, then so is $q^{\vee}:\cat E^{\vee} \to \cat B$.
\end{lemma}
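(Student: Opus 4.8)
The plan is to check the three clauses of \cref{def:fib-of-CLA-cats} for $q^\vee$, exploiting the following dictionary from the dual construction: the cartesian maps of $q^\vee$ are exactly the cartesian maps of $q$ (the spans of~\eqref{eq:dual-fib-spans} whose vertical leg is an identity), while the vertical maps of $q^\vee$ are the formal reverses of the vertical maps of $q$, so that the fibre of $q^\vee$ over $B$ is $(\cat E_B)^{\op}$. The strategy is to isolate a reusable principle: a fibration of cartesian categories with fibred biproducts whose base is a CLA canonically carries the structure of a fibration of CLA categories, and conversely a fibration of CLA categories has fibred biproducts. Granting this, the lemma follows by combining it with \cref{lemma:dual-fibred-biprod}, since $\cat B$ is a fixed CLA throughout.

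First I would establish that $q$ has fibred biproducts. Being a fibration of cartesian categories, $q$ has fibred products --- the fibre product of objects $E_1,E_2$ over $B$ is the reindexing $\Delta_B^*(E_1\times E_2)$ of their global product --- and the point is that clauses (2) and (3) upgrade these to biproducts, the fibrewise zero objects, zero maps and codiagonals being extracted from the cartesian zero maps and the cartesian sums provided by clause~(3), with compatibility with reindexing coming from clause~(2). With fibred biproducts in hand, \cref{lemma:dual-fibred-biprod} gives directly that $q^\vee$ is again a fibration of cartesian categories with fibred biproducts; in particular $\cat E^\vee$ is cartesian and $q^\vee$ preserves products.

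It then remains to reconstruct the left-additive structure on $\cat E^\vee$ and verify clauses (2)--(3). Since $\cat E^\vee$ and $\cat E$ share objects, each object is equipped with a commutative monoid by combining the chosen monoid of the CLA base $\cat B$ with the fibrewise biproducts, and the sum of two parallel spans is defined by adding base legs in $\cat B$ and merging vertical legs through the fibred biproducts and the reindexing comparison maps. One checks that this yields commutative hom-monoids for which left composition distributes, using that $\cat B$ is a CLA and that span composition is computed by pullback along cartesian maps (\cref{rmk:lifts-are-pbs}), which by clause~(3) of $q$ preserves zeros and sums. That $q^\vee$ is a CLA functor is then immediate, as it acts as the CLA functor $q$ on base legs; and clauses (2)--(3) for $q^\vee$ reduce, via the dictionary above, to the corresponding clauses for $q$.

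The main obstacle is the first step --- proving the fibres are semi-additive. The subtlety is that the ambient CLA sum of $\cat E$ does \emph{not} restrict to the fibres: the sum of two vertical maps lies over the doubling map $+_B \circ \Delta_B$ rather than over $\id_B$, so the fibrewise additive structure is a genuinely separate biproduct structure that must be manufactured from clauses~(2)--(3) rather than inherited from $\cat E$. Once this is secured, and once the sum of spans is seen to be well defined, the verification of the remaining clauses is routine bookkeeping with the cartesian/vertical dictionary.
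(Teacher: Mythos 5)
Your route has a genuine gap at its load-bearing first step. You claim, as a ``reusable principle'', that every fibration of CLA categories in the sense of \cref{def:fib-of-CLA-cats} has fibred biproducts, and you need this in order to invoke \cref{lemma:dual-fibred-biprod}, whose hypothesis is precisely ``fibration of cartesian categories \emph{with fibred biproducts}''. But nothing in \cref{def:fib-of-CLA-cats} delivers this: clauses (2) and (3) constrain only \emph{cartesian} maps (lifts of additive maps are additive; zero maps and sums of cartesian lifts are cartesian), whereas semi-additivity of a fibre is a statement about \emph{vertical} maps --- you need the fibre-terminal object to be fibre-initial, and you need vertical injections and codiagonals for $\Delta_B^*(E_1\times E_2)$. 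You correctly identify that the ambient CLA sum on $\cat E$ does not restrict to the fibres (the global injection $\langle \id,0\rangle$ lies over $\langle \id_B, 0\rangle$, not over $\id_B$), but your proposed fix --- ``manufacturing'' the fibrewise biproducts from clauses (2)--(3) --- is asserted, not argued, and I do not see how those clauses could produce the required vertical structure. The paper is careful to keep the two properties separate: \cref{lemma:dual-fibred-biprod} transports fibred biproducts under that explicit extra hypothesis, the present lemma transports only the CLA data, and the two are combined only in \cref{cor:lens_cla} for the concrete simple CLA fibration (which happens to satisfy both).

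By contrast, the paper's own proof of this lemma is a one-line transport argument: $\cat E^\vee$ has the same objects as $\cat E$, so the chosen commutative monoids are the same, and the clauses of \cref{def:fib-of-CLA-cats} carry over under the cartesian/vertical dictionary you describe. (That proof is itself terse --- one still has to reinterpret the monoid structure maps and the hom-set addition inside $\cat E^\vee$, which is close to the ``sum of spans'' construction you sketch --- but it never needs the fibres to be semi-additive.) If you want to keep your architecture, you must either add ``with fibred biproducts'' as a hypothesis of the lemma (weakening the statement) or actually prove that the fibres of a fibration of CLA categories are semi-additive, neither of which is currently done. Otherwise, drop the detour through \cref{lemma:dual-fibred-biprod} and argue directly that the CLA data on $\cat E$ induces CLA data on $\cat E^\vee$ over the identity on objects.
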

\begin{proof}The objects of $\cat E$ are those of $\cat E^\vee$ and so the chosen monoids are the same, and so the result follows.
\end{proof}

\begin{corollary}\label{cor:lens_cla}
$\lens_{\CLA}(\cat B) \colon L_{\CLA}(\cat B) \to \cat B$ is a fibration of CLA categories.
\end{corollary}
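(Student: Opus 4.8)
The plan is to read the statement off \cref{lemma:dual-of-CLA fib} almost immediately. By construction the lens CLA fibration is the fibrewise opposite of the simple CLA fibration, $\lens_{\CLA}(\cat B) = \simple_{\CLA}(\cat B)^{\vee}$, so once we know that $\simple_{\CLA} \colon S_{\CLA}(\cat B) \to \cat B$ is itself a fibration of CLA categories, applying \cref{lemma:dual-of-CLA fib} to $q = \simple_{\CLA}$ yields the conclusion. Hence the whole content of the proof is the verification that the simple CLA fibration satisfies \cref{def:fib-of-CLA-cats}; the ``fibred biproducts'' clause of the motivating requirement stated after the definition of the lens CLA fibration is obtained in parallel from \cref{lemma:dual-fibred-biprod}, and I would mention it only in passing.

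To discharge that obligation I would first confirm that $\simple_{\CLA}$ is a fibration of cartesian categories. Since $S_{\CLA}(\cat B)$ is the wide subcategory of $S(\cat B)$ cut out by keeping only the maps that are additive in their second component, its products are computed exactly as in the simple fibration, ${A' \choose A} \times {B' \choose B} = {A' \times B' \choose A \times B}$, and the projections, being assembled from product projections, are additive in the second component and so lie in $S_{\CLA}(\cat B)$. The cartesian lift of $f \colon A \to B$ at ${B' \choose B}$ is the map ${\pi_{B'} \choose f} \colon {B' \choose A} \to {B' \choose B}$ whose top component is a projection; projections are additive, so these lifts remain in $S_{\CLA}(\cat B)$ and retain their universal property when tested against the (fewer) maps of the subcategory. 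Product-preservation by $\simple_{\CLA}$ and the stability of cartesian lifts under products are then inherited from the corresponding facts for the simple fibration (\cref{ex:simple_fib}).

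Next I would pin down the CLA structure on the total category. Each object ${A' \choose A}$ must carry a chosen commutative monoid with carrier ${A' \choose A}$, induced from the chosen monoids on $A$ and $A'$ supplied by the CLA structure of $\cat B$; one then checks that these operations are genuine maps of $S_{\CLA}(\cat B)$, that they satisfy $M\!\left({A'\choose A} \times {B' \choose B}\right) = M\!\left({A'\choose A}\right) \times M\!\left({B'\choose B}\right)$, and that $\simple_{\CLA}$ sends each such monoid to its bottom component, so that $S_{\CLA}(\cat B)$ is a CLA and $\simple_{\CLA}$ is a CLA functor. This settles condition (1) of \cref{def:fib-of-CLA-cats}. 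Condition (2) then reduces, for the lift ${\pi_{B'} \choose f}$ of an additive base map $f$, to the additivity of $f$ together with the additivity of projections.

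The main obstacle is condition (3): that zero maps are cartesian and that the sum of the cartesian lifts of $f$ and $g$ (which share the common reindexed domain ${B' \choose A}$) is again the cartesian lift, namely of $f+g$. This is exactly the place where the interaction between the chosen monoid on ${B' \choose B}$ and the composition rule of $S_{\CLA}(\cat B)$ must be handled with care, and where the precise prescription for the induced monoid on the total category is forced: a careless, purely componentwise choice risks duplicating the top component under addition, so the bookkeeping of how the base components feed into the top of a sum is the delicate step to get right. Once conditions (1)--(3) are verified in this way, \cref{lemma:dual-of-CLA fib} closes the argument, and \cref{lemma:dual-fibred-biprod} supplies the fibred biproducts if they are wanted as well.
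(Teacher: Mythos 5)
Your proposal is correct and follows essentially the same route as the paper: the corollary is obtained by applying \cref{lemma:dual-fibred-biprod} and \cref{lemma:dual-of-CLA fib} to the simple CLA fibration, with the only real obligation being that $\simple_\CLA$ is itself a fibration of CLA categories with fibred biproducts (a hypothesis the paper leaves implicit and you rightly make explicit). The paper's proof additionally records the concrete product structure of $L_{\CLA}(\cat B)$ --- projections built from zeros and pairings built from sums --- but only because these formulas are needed later (in \cref{lemma:phi} and \cref{th:FODS-RDCs}), not as part of the argument itself.
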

\begin{proof}
This immediately follows from the previous two results.  However, in what comes next, it will be useful to explicitly spell out the product structure.  So, suppose we have objects ${A' \choose A}$ and ${B' \choose B}$; their product object is simply ${A' \times B' \choose A \times B}$.  The projections are given by
	\[ {\<\pi_{A'}, 0\> \choose \pi_A}: {A' \times B' \choose A \times B} \opticto {A' \choose A} \mbox{ and } {\<0, \pi_{B'} \> \choose \pi_B}: {A' \times B' \choose A \times B} \opticto {B' \choose B}. \]
Given another object ${X' \choose X}$ with maps
	\[ {f^\sharp \choose f}: {X' \choose X} \opticto {A' \choose A} \mbox{ and } {g^\sharp \choose g}: {X' \choose X} \opticto {B' \choose B} \]
then the pairing of them uses sum:
	\[ {f^\sharp \circ \<\pi_X, \pi_{A'}\> + g^\sharp \circ \<\pi_X, \pi_{B'}\> \choose \<f,g\>}  : {X' \choose X} \opticto {A' \times B' \choose A \times B} \]
\end{proof}

We can now give a fibrational presentation of the first order axioms of reverse cartesian differential categories, which we name RDC1-5 following~\cite{cockett_reverse_2020}. This result was anticipated by~\cite[Proposition~31]{cockett_reverse_2020} and~\cite{cruttwell_categorical_2022}.
We give here a complete and clear proof, though it basically amounts to that of~\cref{th:FODS-CDCs}. In the statement, we use a stationary section (see Section~\ref{sec:CDCsections}): recall that it maps $A$ to ${A \choose A}$. Also, we write $\rho$ for the differential operator, mirroring the notation used for CDCs and gCDCs.

\begin{therm}
\label[theorem]{th:FODS-RDCs}
    Let $\cat B$ be a CLA category.
    Let $R : \cat B \to L_{\CLA}(\cat B)$ be a stationary CLA section of the additive lens fibration:
    \begin{equation}
        \begin{tikzcd}[ampersand replacement=\&,sep=scriptsize]
        	{L_{\CLA}(\cat B)} \& {{A \choose A}} \& {{B \choose B}} \\
        	\\
        	{\cat B} \& A \& B
        	\arrow["{\lens_{\CLA}}"', from=1-1, to=3-1]
        	\arrow["R"', curve={height=15pt}, dashed, from=3-1, to=1-1]
        	\arrow[""{name=0, anchor=center, inner sep=0}, "f", from=3-2, to=3-3]
        	\arrow["\rho f"', shift right, from=1-3, to=1-2]
        	\arrow["f"', shift right, from=1-2, to=1-3]
        	\arrow[""{name=1p, anchor=center, inner sep=0}, phantom, from=1-2, to=1-3, start anchor=center, end anchor=center]
        	\arrow[shorten <=17pt, shorten >=17pt, maps to, from=0, to=1p]
        \end{tikzcd}
        \quad \text{where}\ \rho f : A \times B \to A.
    \end{equation}
    Then the operation $\rho$ captures all the first-order aspects of an RDC structure on $\cat B$, namely:
    \begin{description}
        \item[RDC.1] $\rho$ preserves the left-additive structure on $\cat B$:
        \begin{equation}
        \label{eq:rdc1}
            \rho(0) = 0, \quad \rho(f+g) = \rho f + \rho g,
        \end{equation}
        \item[RDC.2] for each map $f:A \to B$, $\rho f : A \times B \to A$ is additive in its second component:
         \begin{equation}
         \label{eq:cdc1}
    	            \rho f \circ \langle a,h+k\rangle = \left( \rho f \circ \langle a,h \rangle \right) + \left( \rho f \circ \langle a,k \rangle \right) \qquad \rho f \circ \langle a,0\rangle = 0
        \end{equation}
        \item[RDC.3] $\rho$ preserves identities and projections: $\rho(\id_A) = \pi_1$, and if $\pi_A :A \times B \to A$ is a projection, we have
        \begin{equation}
        \label{eq:rdc3}
            \rho(\pi_A) = \<\pi_2, 0\>
        \end{equation}
        (and similarly for $\pi_B$) and in case $A=1$, this also means that $\rho(!_B) = 0$,
        \item[RDC.4] $\rho$ preserves pairings:
        \begin{equation}
            \rho \langle f,g \rangle = (\id \times \pi_1)Rf + (\id \times \pi_2)Rg,
        \end{equation}
        \item[RDC.5] $\rho$ satisfies the (reverse) chain rule: if $A \nto{f} B \nto{g} C$ are composable morphisms in $\cat B$, we have:
        \begin{equation}
        \label{eq:rdc5}
            \rho(g \circ f) = \rho(f) \circ \<\pi_0, \rho(g) \circ \<f \circ \pi_0, \pi_1\>\>
        \end{equation}
    \end{description}
\end{therm}

\begin{proof}
    \textbf{RDC.1} holds because $R$ preserves left-additive structure in quality of CLA functor.  The projection-preservation part of \textbf{RDC.3} and \textbf{RDC.4} hold since $R$ is product-preserving (projections and pairing in $S^+(\cat B)^\vee$ were described above in Corollary \ref{cor:lens_cla}).
    Finally, \textbf{RDC.2}, the identity preservation part of \textbf{RDC.3}, and \textbf{RDC.5} hold since that's the structure of $\lens_{CLA}(\cat B)$: vertical maps are additive and compose as in~\eqref{eq:rdc5}.
\end{proof}

One of the key results of \cite{cockett_reverse_2020} was that every RDC gives a CDC (see Theorem 16).  This was established directly (that is, by defining a forward derivative $D$ from a reverse derivative $R$, and checking the axioms one-by-one).  However, this direct approach requires fairly lengthy computations; moreover, it does not provide much insight into where the result comes from or whether it may be true in more general settings.    However, the fibrational approach we have outlined here gives an alternative way to understand this result.  In particular, by thinking of CDCs and RDCs as sections, the next two results allow one to define the CDCs from an RDC via a composite of functors.

We begin with a result that is interesting in its own right: a natural functor from lenses of lenses to the simple fibration.  (However, note that to even state this result requires that we work with \emph{CLA} lenses).

\begin{lemma}\label{lemma:phi}
There is a CLA functor
	\[ \Phi: L_{CLA}(L_{CLA}(\cat B)) \to S_\CLA(\cat B). \]
\end{lemma}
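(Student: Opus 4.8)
The plan is to construct $\Phi$ explicitly by unwinding the two nested applications of the dual (lens) construction and then checking by hand that the resulting assignment is a CLA functor. The guiding intuition is the classical fact that differentiating a reverse derivative recovers the forward derivative: the \emph{fibre of the fibre} of $L_{\CLA}(L_{\CLA}(\cat B))$ behaves like a ``double cotangent'', and the canonical self-duality lets us read off a forward derivative from it. This is exactly what makes $\Phi$ the key ingredient in recovering a CDC from an RDC, via the composite of $\Phi$ with the doubled section.

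First I would spell out objects and morphisms of the iterated category. An object of $L_{\CLA}(L_{\CLA}(\cat B))$ is a pair ${\mathcal A' \choose \mathcal A}$ of objects of $L_{\CLA}(\cat B)$; writing $\mathcal A = {A_2 \choose A_1}$ and $\mathcal A' = {A_4 \choose A_3}$, it amounts to a quadruple $(A_1,A_2,A_3,A_4)$ of objects of $\cat B$. Using the explicit product and projection formulas of \cref{cor:lens_cla}, a morphism unwinds into four components: a base map $F\colon A_1 \to B_1$, together with $F^\sharp\colon A_1 \times B_2 \to A_2$, $G\colon A_1 \times B_3 \to A_3$ and $G^\sharp\colon (A_1\times B_3)\times A_4 \to A_2 \times B_4$, where $G^\sharp$ is additive in its last argument (being the $\sharp$-part of a morphism of $L_{\CLA}(\cat B)$) and the pair $(G,G^\sharp)$ is additive in the $(B_3,B_4)$ direction (being the $\sharp$-part of a morphism of $L_{\CLA}(L_{\CLA}(\cat B))$, hence additive with respect to the chosen componentwise monoid supplied by \cref{cor:lens_cla}).

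On this data I would define $\Phi$ by keeping the base-of-base and the fibre-of-fibre and discarding the two middle components:
\[ \Phi{\mathcal A' \choose \mathcal A} = {A_4 \choose A_1}, \qquad \Phi(F,F^\sharp,G,G^\sharp) = {h \choose F}, \]
where $h\colon A_1 \times A_4 \to B_4$ is obtained from $G^\sharp$ by zeroing the $B_3$-slot, namely $h := \pi_{B_4}\circ G^\sharp \circ \langle\langle\pi_{A_1},0\rangle,\pi_{A_4}\rangle$. Well-definedness into $S_{\CLA}(\cat B)$ is the first check: $h$ is additive in its second argument because $G^\sharp$ is additive in its last argument and the reshaping map feeds $A_4$ linearly into that slot. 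Identity preservation is a short computation: the identity on ${\mathcal A' \choose \mathcal A}$ has $G^\sharp = \langle 0, \pi_{A_4}\rangle$, so $h$ collapses to $\pi_{A_4}$, which is exactly the identity of ${A_4 \choose A_1}$ in $S_{\CLA}(\cat B)$. Product- and monoid-preservation are then immediate from the fact that products and chosen monoids in both $L_{\CLA}(L_{\CLA}(\cat B))$ and $S_{\CLA}(\cat B)$ are computed componentwise (\cref{cor:lens_cla}): $\Phi$ selects the first and fourth coordinates, which carry their chosen monoids, and ${A_4 \choose A_1}\times{A_4' \choose A_1'} = {A_4\times A_4' \choose A_1\times A_1'}$ matches $\Phi$ of the product.

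The main obstacle is preservation of composition. Composing in $L_{\CLA}(L_{\CLA}(\cat B))$ uses the lens (reverse chain-rule) composition at two nested levels, whereas composing the images in $S_{\CLA}(\cat B)$ uses the forward chain-rule composition of the simple fibration (\cref{ex:simple_fib}). I expect the heart of the proof to be a calculation showing these agree after applying $\Phi$; the additivity of $G^\sharp$ in its last slot, together with additivity of the outer $\sharp$-parts in the $(B_3,B_4)$ direction, is precisely what is needed to push the zeroed $B_3$-slot through the nested composite and reproduce the forward chain rule. This is where the ``reverse-of-reverse is forward'' intuition must be turned into an equation, and it is the only step that is more than bookkeeping.
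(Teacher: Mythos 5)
Your construction of $\Phi$ is exactly the one in the paper: keep the base-of-base and fibre-of-fibre on objects, and on morphisms take the base map together with $\pi_{B_4}\circ G^\sharp\circ\langle\langle\pi_{A_1},0\rangle,\pi_{A_4}\rangle$ (the paper writes this same composite as $\pi_{D'}\circ g^\sharp\circ\langle\pi_A,0,\pi_{B'}\rangle$). Your verification sketch (well-definedness via additivity of $G^\sharp$ in its last slot, identities, products, monoids, and the chain-rule compatibility) covers, in somewhat more detail, what the paper dismisses as ``straightforward to check.''
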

\begin{proof}
The objects of $L_{CLA}(L_{CLA}(\cat B))$ are quadruples
	\[ {{B' \choose B} \choose {A' \choose A}} \]
and we define $\Phi$ of such an object to be the outer components: ${B' \choose A}$.

A map in $L_{CLA}(L_{CLA}(\cat B))$
	\[ {{B' \choose B} \choose {A' \choose A}} \to {{D' \choose D} \choose {C' \choose C}} \]
then consists of lens maps
	\[ {f^\sharp \choose f}: {A' \choose A} \to {C' \choose C} \mbox{ and } {g^\sharp \choose g}: {A' \choose A} \times {D' \choose D} \to {B' \choose B}. \]
In particular, $f: A \to C$, and $g^\sharp: A \times D \times B' \to A' \times D'$ (recall from Corollary \ref{cor:lens_cla} that ${A' \choose A} \times {D' \choose D} = {A' \times D' \choose A \times D}$).

Then $\Phi$ of such a map should be a simple fibration map from ${B' \choose A}$ to ${D' \choose C}$, so must consist of maps
	\[ A \to C \mbox{ and } A \times B' \to D'. \]
Then we can simply take $\Phi$ of the above to have first map $f$, and second map the composite
\begin{equation*}
     \begin{tikzcd}[ampersand replacement=\&, sep=scriptsize]
            {A \times B'} \& \& A \times D \times B' \& {A' \times D'} \& D'
            \arrow["{\<\pi_A, 0, \pi_{B'}\>}", from=1-1, to=1-3]
            \arrow["{g^*}", from=1-3, to=1-4]
            \arrow["{\pi_{D'}}", from=1-4, to=1-5]
        \end{tikzcd}
\end{equation*}
It is then straightforward to check that this defines a CLA functor.
\end{proof}

\begin{therm}
    \label[theorem]{thm:cdc_from_rdc}
    Let $\cat B$ be a CLA and $R : \cat B \to L_{\CLA}(\cat B)$ a stationary CLA section of the additive lens fibration. Then we can build an associated stationary CLA section of the additive simple fibration.  (In other words, every first-order RDC gives a first-order CDC).
\end{therm}

\begin{proof}
A CLA section $R:{\cat B} \to L_{\CLA}(\cat B)$ can be lifted to
\[ L_{\CLA}(R): L_{\CLA}(\cat B) \to L_{\CLA}L_{\CLA}(\cat B). \]
We can then get the desired CLA section of  $S_\CLA(\cat B)$ by the triple composite
\begin{equation*}
     \begin{tikzcd}[ampersand replacement=\&, sep=scriptsize]
            {\cat B} \& {L_{\CLA}(\cat B)} \& \& {L_{\CLA}L_{\CLA}(\cat B)} \& {S_\CLA(\cat B)}
            \arrow["{R}", from=1-1, to=1-2]
            \arrow["{L_{\CLA}(R)}", from=1-2, to=1-4]
            \arrow["{\Phi}", from=1-4, to=1-5]
        \end{tikzcd}
\end{equation*}
where $\Phi$ is defined in Lemma~\ref{lemma:phi}.
\end{proof}
It is straightforward to check that the resulting structure is the same as in \cite[Theorem 16]{cockett_reverse_2020}: for a map $f: A \to B$, a reverse derivative $R$ gives a forward derivative $D$ by the composite
	\[ D[f] := \pi_B \circ R[R[f]] \circ \<\pi_A, 0, \pi_B\>. \]
However, the above result allows one to have a higher-level understanding of how this comes about, and how it could potentially be generalised (for more on this, see~\cref{sec:reverse_tan}).

\subsection{Tangent Categories}
We now go back to `forward' derivative structures.
Moving further up the generality ladder from generalised CDCs, we get to tangent categories, a concept introduced in~\cite{Rosicky84} and revamped and further developed in~\cite{cockett_differential_2014}. A tangent category is a category equipped with a ``tangent functor'' $\tau: \cat B \to \cat B$ with properties mimicking those of the tangent bundle construction on smooth manifolds.  Each CDC is a tangent category (with $\tau(A) := A \times A$) and each gCDC is a tangent category (with $\tau(A) := A \times M(A)$).  However, for a general smooth manifold $A$, its tangent bundle $\tau(A)$ need not be of the form $A \times (-)$.  Tangent categories thus reflect this greater generality, merely asking each object $A$ have a ``tangent bundle'' $\tau(A)$ over it (satisfying various categorical axioms).

At first glance, it is not hard to see how the definition of a tangent category might be formulated fibrationally.
First, the tangent functor comes with a projection natural transformation $p:\tau \to 1$, ie for every $A$, there is a map $p_A:\tau A \to A$. Thus $\tau$, when equipped with $p$, defines a section of the codomain fibration over $\cat B$ which maps
$A$ to $p_A$. Furthermore, (i) the tangent category definition asks that for each $n$, the pullback of $n$ copies of $p_A:\tau A \to A$ exists; this can be stated more abstractly as requiring $n$-fold products of $p_A$ in the fibre $\cat B/A$, and (ii) $p_A$ is required to possess the structure of a commutative monoid in the fibre $\cat B/A$. Thus, one might conjecture the first-order aspects of a tangent category to be equivalent to a section of the fibration of \emph{additive bundles}, which maps an object in the base of a fibration to a commutative monoid in the fibre above that object (see below).

There is a slight problem in the story above. The definition of a tangent category requires very few pullbacks to exist. Since pullbacks are reindexing in the codomain fibration, this means that to model the exact definition of a tangent category, fibrations may be too strong a requirement. However, as the theory of tangent categories has been developed, it has often been very helpful to add the assumption that all pullbacks along differential bundles exist.  For example, when defining a connection on a differential bundle $q: E \to A$, we need to have the object which is the pullback of $q$ along $p_A: \tau A \to A$ \cite[Definition 4.5]{connections}. We could of course still take a minimal approach which adds appropriate pullbacks only as and when required, but that leads to a fragmented theory, where the precise set of pullbacks present at any given time is hard to keep track of.
So, instead, we take the approach articulated in the introduction: we do not seek to replicate the exact definitions present in the literature, but rather identify fibrational variants which have good behaviour, and which can be used in specific settings if they offer added value. Of course, we do not want to stray too far from the existing body of work: for example, the key example of smooth manifolds does not have all pullbacks, and therefore we do not want to build a model that excludes this key example. This motivates our focus on categories of bundles and existence of pullbacks of bundles.

In order to execute the above programme, we need to define what a fibration of additive bundles is. Just as the canonical construction of a category with biproducts from one with products is the category of internal commutative monoids, we can build a fibration with fibred biproducts from one with fibred products (Definition~\ref{def:fibredproducts}) by taking commutative monoids in each fibre of the fibration:

\begin{definition}[{\bf Additive Fibration}]
    Let $q:\cat E \to \cat B$ be a fibration with fibred products. Let $\cat E^+$ be the category with objects pairs $(B,M)$, where
    $B$ is an object of $\cat B$ and $M$ is a commutative monoid in the fibre $\cat E_B$. A morphism of $\cat E^+$ from $(B,M)$ to $(B',M')$ consists of a map $f:B \to B'$ and commutative monoid homomorphism $f':M \to f^* M'$, where $f^*M'$ is the commutative monoid arising from reindexing $M'$ along $f$. We define the  \textbf{cofree fibration with fibred biproducts} (or simply \textbf{additive fibration}) over $q$ as the map $q^+ = qU : \cat E^+ \to \cat B$ given as follows (where $U$ is the natural forgetful functor).
    \begin{equation}
        \begin{tikzcd}[ampersand replacement=\&, sep=scriptsize]
            {\cat E^+} \&\& {\cat E} \\
            \& {\cat B}
            \arrow["q^+"', from=1-1, to=2-2]
            \arrow["q", from=1-3, to=2-2]
            \arrow["U", from=1-1, to=1-3]
        \end{tikzcd}
    \end{equation}
\end{definition}
Note that by~\cref{lemma:cofree-semiadd} the fibration $q^+$ defined as above indeed has fibred biproducts. For our purposes here, the main case is the additive fibration over a fibration of bundles (\emph{cf.}~\cref{ex:bun_fib}). Given a fibration of bundles $\cod_{\cat B} \colon {\cat B^{\downbundle}} \to {\cat B}$, we refer to the additive fibration $\cod_{\cat B}^+ \colon {\cat B^{\downbundle+}} \to {\cat B}$ over $\cod_{\cat B}$ as the \textbf{additive bundle fibration}.

\begin{therm}
\label[theorem]{th:FODS-TCs}
    Let $\cat B$ be a category with bundles $\cat F \subseteq \cat B$ and let $T \colon \cat B \to \cat B^{\downbundle+}$ be a functor of categories with bundles section of the additive bundle fibration:
    \begin{equation}
        \begin{tikzcd}[ampersand replacement=\&,sep=scriptsize]
            {\cat B^{\downbundle+}} \& {\begin{matrix}TA\\\downbundle\\A\end{matrix}} \& {\begin{matrix}TB\\\downbundle\\B\end{matrix}} \\
            \\
            {\cat B} \& A \& B
            \arrow["{\cod_{\cat B}^+}"', from=1-1, to=3-1]
            \arrow["T"', curve={height=15pt}, dashed, from=3-1, to=1-1]
            \arrow[""{name=0, anchor=center, inner sep=0}, "f", from=3-2, to=3-3]
            \arrow["T f", shift left=6.5, from=1-2, to=1-3]
            \arrow["f", shift right=7, from=1-2, to=1-3]
            \arrow[""{name=1p, anchor=center, inner sep=0}, phantom, from=1-2, to=1-3, start anchor=center, end anchor=center]
            \arrow[shorten <=16pt, shorten >=30pt, maps to, from=0, to=1p]
        \end{tikzcd}
    \end{equation}
    Then assuming the existence of such $T$ is equivalent to assuming all the first-order aspects of a tangent category structure on $\cat B$, namely:
    \begin{enumerate}
        \item There is a copointed endofunctor $\tau:\cat B \to \cat B$ with natural transformation $p:\tau \twoto \id_{\cat B}$.
        \item All finite pullback powers of components of $p$ exist (that is, for each $n$ and each $A$ the pullback of $n$ copies of $p_A: \tau(A) \to A$ along itself exists) and $\tau$ preserves these pullbacks.
        \item There are natural transformations $0_B : B \to \tau B$ and $+_B : \tau B \times_B \tau B \to \tau B$ making each $p_B:\tau B \to B$ an additive bundle, that is, a commutative monoid in the slice  category $\cat B/B$.
    \end{enumerate}
\end{therm}
\begin{proof}
     Given $T : \cat B \to \cat B^{\downbundle +}$, we get an endofunctor by composing $T$ with the domain functor $\dom^+ : \cat B^{\downbundle +} \to \cat B$
    \begin{equation}
        \tau := \cat B \nlongto{T} \cat B^{\downbundle +} \nlongto{\dom^+_{\cat B}} \cat B.
    \end{equation}
    This is copointed, by whiskering the obvious natural transformation $\dom^+_{\cat B} \twoto \cod^+_{\cat B}$ with $T$:
    \begin{equation}
        p :=
        \begin{tikzcd}[ampersand replacement=\&,sep=scriptsize]
            \& {\cat B^{\downbundle+}} \\
            {\cat B} \&\&\& {\cat B}
            \arrow["T", curve={height=-6pt}, from=2-1, to=1-2]
            \arrow[""{name=0, anchor=center, inner sep=0}, "{\cod^+_{\cat B}}"'{pos=0.3}, shift left=2, curve={height=6pt}, from=1-2, to=2-4]
            \arrow["{\id_{\cat B}}"', Rightarrow, no head, from=2-1, curve={height=15pt}, to=2-4]
            \arrow[""{name=1, anchor=center, inner sep=0}, "{\dom^+_{\cat B}}", curve={height=-12pt}, from=1-2, to=2-4]
            \arrow[shorten <=2pt, shorten >=2pt, Rightarrow, from=1, to=0]
        \end{tikzcd}
    \end{equation}
    Indeed, $T \circ \cod^+_{\cat B} = \id_{\cat B}$ because $T$ is a section of $\cod^+_{\cat B}$. Observe that, as a result of this definition of $p$, $T$ maps $B$ to the bundle $p_B : TB \to B$ in the definition of a tangent category. Since $p_B$ is a bundle, it can be pulled back along itself as many times as one wishes, and since $T$ is a functor of categories with bundles, $T$ preserves such pullbacks. By virtue of being a right adjoint, $\dom^+$ preserve limits; hence $\tau$ preserve pullback powers of components of $p$. Finally, the natural transformations $0$ and $+$ are induced by projecting with $\dom^+$ the commutative monoid structure that each $p_B :TB \to B$ has in $\cat B^{\downbundle +}$.

    Conversely, given a copointed endofunctor $\tau$, we construct a section of the additive codomain fibration. The components of the copoint of $\tau$, call it $p$, organise into a section $T:\cat B \to \cat B^{\downbundle +}$ sending $B:\cat B$ to $p_B$, and a map $f:B \to B'$ to the corresponding naturality square for $p$.
    By virtue of being bundles and additive this is well-defined.
    Moreover, it is easy to check that $T$ is a functor of categories with bundles.
\end{proof}

\subsection{Reverse tangent categories}\label{sec:reverse_tan}

As noted in the previous section, a CDC cannot adequately capture the derivative on spaces like smooth manifolds: in a CDC, the (forward) derivative of a map $A \to B$ must be of type $A \times A \to B$, but for general smooth manifolds, the (forward) derivative of a map $A \to B$ has type $TA \to TB$, where $TA$ need not even be of the form $A \times (-)$.

A similar issue happens when one attempts to work with reverse derivatives on smooth manifolds.  A RDC gives for each map $A \to B$ a reverse derivative of type $A \times B \to A$, but again this is inadequate for smooth manifolds.  For smooth manifolds, instead one has a map $A \times_{B} T^*B \to T^*A$, where $T^*$ is the \emph{cotangent} bundle.  Categorically, this type of derivative is described by \emph{reverse tangent categories} (\cite{reverse_tangent}).  In this section, we will briefly recall this idea and sketch how the fibrational viewpoint can give new insight into this structure.

To begin, it is useful to recall different ways RDCs can be defined.  As discussed in Section \ref{sec:rdcs}, an RDC is a CLA $\cat B$ which has, for each map $f: A \to B$, a map $R[f]: A \times B \to A$.  Moreover, as per Theorem \ref{th:FODS-RDCs} the first-order aspect of this structure can be captured by a section of the dual of the simple CLA fibration, $\lens_{CLA}(\cat B)$.

However, there is an alternative characterization of RDCs.  As also noted previously, every RDC gives a CDC, and in fact one can precisely identify what extra structure on a CDC on $\cat B$ is needed to a build an RDC: a ``contextual linear dagger''.  This is essentially a functor from the simple CLA fibration of $\cat B$ to its dual.

It is this alternative characterization which the authors of  (\cite{reverse_tangent}) use to \emph{define} reverse tangent categories.  As they note in Remark 26 of that paper, ``we have found it difficult to provide a direct description of a reverse tangent category'' (a direct definition being one which axiomatizes a reverse derivative directly).  So, instead, taking inspiration from this alternative description of RDCs, they define a reverse tangent category to consist of a tangent category together with a ``linear involution'', which is a functor from a fibration of additive bundles (``differential'' bundles) to its dual (for more details, see \cite[Definition 23]{reverse_tangent}).

Taking the fibrational point of view, however, one can at least easily give a ``first-order'' theory of reverse tangent categories.  For this, let us use $\dlens_{A}(\cat B)$ (``additive dependent lenses'') to denote the dual of the additive bundle fibration over $\cat B$, that is,
	\[ \dlens_{A}(\cat B) := ({\cat B^{\downbundle+}})^{\vee} \]

\begin{definition}
\label[definition]{def:revtangentcat}
    A \textbf{first-order reverse tangent category} consists of a category $\cat B$ together with a section of the dual of the fibration of additive bundles,
	\[ R: \cat B \to \dlens_{A}(\cat B) \]
\end{definition}

It is then immediate that any reverse tangent category is a first-order reverse tangent category.

Now, part of the difficulty of trying to define a reverse tangent category in this way was that it was not clear to the authors of (\cite{reverse_tangent}) how to define a tangent category from such a structure.  However, Section \ref{sec:rdcs} offers an idea of how to do this.  If one could build a functor
	\[ \psi: \dlens_{A}(\dlens_{A}(\cat B)) \to  {\cat B^{\downbundle+}} \]
then given a first-order reverse tangent category $R: \cat B \to \dlens_{A}(\cat B)$, just as in Theorem \ref{thm:cdc_from_rdc}, one could build a first-order tangent category by the composite
\begin{equation*}
     \begin{tikzcd}[ampersand replacement=\&, sep=scriptsize]
            {\cat B} \& {\dlens_{A}(\cat B)} \& \& {\dlens_{A}(\dlens_{A}(\cat B))} \& {\cat B^{\downbundle+}}
            \arrow["{R}", from=1-1, to=1-2]
            \arrow["{\dlens_A(R)}", from=1-2, to=1-4]
            \arrow["{\psi}", from=1-4, to=1-5]
        \end{tikzcd}
\end{equation*}
Thus, this gives an idea of how to solve the ``direct definition'' problem of reverse tangent categories.  We leave a more detailed investigation of this idea to future work.

\subsection{The General Case: First-Order Differential Structures}
\label{sec:general-fods}
Our examples showcase a general pattern: given a certain structured category $\cat B$ (e.g.~with products, CLA, gCLA, bundles), the first-order aspects of a differential structure on that category are captured by the section of a specific kind of fibration with fibred biproducts over it.
This is what we call a \emph{first-order differential structure}, or \emph{FODS} for brevity.
The structure captured by a FODS is i) functoriality, which implies a form of the chain rule; ii) product preservation, which is a form of the product rule; and iii) additivity of the derivative over its tangent space. The rest of the structure pertains to the particular model of differentiation being discussed, and thus the definition of FODS is transparent to it. We now make this notion rigorous.

\begin{definition}[\bf FODS]
\label[definition]{def:fods}
    Let $\dblcat K$ be a 2-category with finite 2-limits, and let $\cat B$ be an object in it.
    A \textbf{first-order differential structure} (\textbf{FODS}) on $\cat B$ is a pair of a fibration $q:\cat E \to \cat B$ in $\dblcat K$ with fibred biproducts and a section $T$ thereof. A FODS is \textbf{cartesian} if $\cat B$, $\cat E$ have products and both $q$ and $T$ preserves them.
\end{definition}

Note that, in the above, the notion of $q$ in $\dblcat K$ having fibred biproducts generalises the analogous notion introduced in Definition~\ref{def:fibredproducts} for the case $\dblcat K=\Cat$.
For the sake of clarity, we may unpack it, as follows. First, a \textbf{cartesian} (resp. \textbf{cocartesian}) object in a 2-category with finite limits is an object $\cat B$ whose diagonal 1-cell $\Delta_{\cat B}:\cat B \to \cat B \times \cat B$ and terminal 1-cell $!_{\cat B} : \cat B \to 1$ admit right adjoints $\Delta_{\cat B} \adj \times$, $!_{\cat B} \adj \id_{\cat B}$ (resp. left adjoints $+ \adj \Delta_{\cat B}$, $0_{\cat B} \adj {!_{\cat B}}$). Thus an object is \textbf{semi-additive} if and only if $\Delta_{\cat B}$ and $!_{\cat B}$ admit ambidextrous adjoints, i.e.~both left and right adjoints exists and coincide. Now to say that a fibration $q:\cat E \to \cat B$ in $\dblcat K$ \textbf{has fibred biproducts} means that $q$ is a semi-additive object in $\Fib_{\dblcat K}(\cat B)$, i.e.~$q$ admits a fibred ambidextrous adjoint $\oplus : q \times_{\cat B} q \to q$, giving biproducts on each fibre.

A cartesian FODS in $\dblcat K$ is really a FODS in $\Cart{\dblcat K}$, i.e.~the 2-category of cartesian objects in $\dblcat K$. In fact, all our examples are cartesian except tangent categories, which have a commonly occurring cartesian variant, but are not cartesian from the start.
Other non-cartesian examples, which we do not discuss because they are slight variations of others we treat here, are monoidal forward and reverse differential categories, see~\cite{cruttwell_monoidal_2022}.

We summarise in Table~\ref{fig:FODS} how the examples covered in the previous sections fit the proposed pattern.

\begin{table}[h]
    \centering
    \begin{tabular}{c|c|c|c}
         & \textbf{2-category} & \textbf{fibration} & \textbf{extra requirements}\\
         \textbf{gCDCs} & $\CartMonCat$ & \text{additive simple} & \\
         \textbf{CDCs} & $\CLACat$ & \text{simple CLA} & \text{stationary section}\\
         \textbf{RDCs} & $\CLACat$ & \text{lens CLA} & \text{stationary section}\\
         \textbf{tangent categories} & $\BundleCat$ & \text{additive bundles} \\
         \textbf{reverse tangent categories} & $\BundleCat$ & \text{additive dependent lenses} &
    \end{tabular}
    \caption{first-order differential structures}\label{fig:FODS}
\end{table}


\begin{example}
    As an easy example, the first-order part of a \emph{cartesian} tangent category~\cite[Definition~2.8]{cockett_differential_2014} is a cartesian FODS, and it is in fact exactly the cartesian version of a first-order tangent category (\cref{th:FODS-TCs}).
\end{example}



\begin{example}
    \cref{def:fods} can also be used to guide new definitions.
    For instance, suppose we want to study a notion of `gCDC with coproducts'.
    If its first-order part is to be captured by a FODS respecting the structure, then it means we are working in the 2-category $\dblcat{Cocart}\Cat$ of cocartesian categories.
    This then disqualifies the additive simple fibration as a carrier of a gCDC with coproducts, since if $\cat B$ has coproducts, in general, $S^+(\cat B)$ does not have coproducts globally.
    Instead, extensive categories support both pullbacks and coproducts and thus their codomain fibration has global coproducts preserved by the fibration.
\end{example}

\section{Linearity and differential objects in FODS}
\label{sec:diff-objs}

For many categories with differential structure, spaces satisfying $TA \cong A \times A$ (where $T$ is our section) are special.
Examples are cartesian spaces in $\Smooth$, $R$-modules in affine schemes over a commutative ring $R$, convenient vector spaces in the category of convenient manifolds, and many others (see~\cite{cruttwell_differential_2023} for a recent survey).
These objects are called `differential'~\cite{cockett_differential_2014}. Here we show the first-order essence of a differential object and prove they are commutative monoids whose structure maps are linear with respect to a given differential structure.
This will be our~\cref{th:cmon-lin-obj-is-diff-obj}. We'll use this result to show that all FODS with finite products yield a cofree CDC when restricted to these `first-order' differential objects, generalizing the analogous result for tangent categories~\cite[Theorem~4.11, Theorem~4.12]{cockett_differential_2014}.

\subsection{Linear objects and maps}
\label{sec:linearity}
In this section, we let $\cat B$ be a category with finite products and $T: \cat B \to \cat B^\downarrow$ be a section of the additive codomain functor of $\cat B$ (no need for this to be a \emph{fibration} just yet).

\begin{definition}[\bf Linearity]
\label[definition]{def:linearity}
    Let $\cat B$ be a category with finite products, and let $T: \cat B \to \cat B^\downarrow$ be a section of the additive codomain functor of $\cat B$.
    A \textbf{$T$-linear object} is an object $B:\cat B$ equipped with a vertical isomorphism $ t  : p_B \cong \pi_1$ called \textbf{trivialization}:
    \begin{equation}
    \label{eq:lin-obj}
        \vspace*{5ex}
        \begin{tikzcd}[ampersand replacement=\&,sep=scriptsize]
            TB \&\& {B \times B} \\
            \& B
            \arrow["{\pi_1}", from=1-3, to=2-2]
            \arrow["\Overset{ t }{\cong}"{description}, from=1-1, to=1-3]
            \arrow["{p_B}"', from=1-1, to=2-2]
        \end{tikzcd}
        \vspace*{-5ex}
    \end{equation}
    Likewise, a \textbf{$T$-linear map} is a map $f:B \to B'$ for which the following commutes:
    \begin{equation}
    \label{eq:lin-map}
        \begin{tikzcd}[ampersand replacement=\&,sep=scriptsize]
            TB \& {TB'} \\
            {B \times B} \& {B' \times B'}
            \arrow["{f \times f}", from=2-1, to=2-2]
            \arrow["Tf", from=1-1, to=1-2]
            \arrow[" t ", from=1-2, to=2-2]
            \arrow[" t "', from=1-1, to=2-1]
        \end{tikzcd}
    \end{equation}
\end{definition}

Observe that $(-)^2 : \cat B \to \cat B^\downarrow$, which sends objects $B$ to the bundle $\pi_1 : B \times B \to B$ and similarly for maps, is also a section of the codomain functor, and $T$-linear objects and maps are those for which $T$ behaves like $(-)^2$.

A high-brow way to say this is the following:

\begin{lemma}
\label[lemma]{lemma:cat-of-lin-objs}
    $T$ and $(-)^2$ form parallel 1-cells in the 2-category $\Cat/\cat B$, and the inclusion category of $T$-linear objects and maps in $\cat B$ is the universal fibred functor which makes the two composites naturally isomorphic:
    \begin{equation}
    \label{eq:lin-cat-def}
        \begin{tikzcd}[ampersand replacement=\&, column sep=scriptsize]
            {\Lin(T)} \& {\cat B} \&\& { \cat B^\downarrow} \\
            \&\& {\cat B}
            \arrow["{T}", shift left, from=1-2, to=1-4]
            \arrow["{(-)^2}"', shift right, from=1-2, to=1-4]
            \arrow[Rightarrow, no head, from=1-2, to=2-3]
            \arrow["\cod", from=1-4, to=2-3]
            \arrow["U_\lin", dashed, from=1-1, to=2-3]
            \arrow["U_\lin", dashed, from=1-1, to=1-2]
        \end{tikzcd}
    \end{equation}
\end{lemma}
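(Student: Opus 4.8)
The plan is to recognise the claimed universal property as that of the \emph{iso-inserter} of the parallel pair $T, (-)^2 \colon \id_{\cat B} \rightrightarrows \cod$ in the slice $2$-category $\Cat/\cat B$, and then verify it directly. First I would check the pair is genuinely parallel: both functors land in $\cat B^\downarrow$ and are sections of $\cod$. This is assumed for $T$, while $(-)^2$ sends $B$ to the bundle $\pi_1 \colon B \times B \to B$ and a map $f$ to the square with top $f \times f$ and bottom $f$ (commuting by naturality of $\pi_1$), so $\cod \circ (-)^2 = \id_{\cat B}$ as well. Hence $T$ and $(-)^2$ are parallel $1$-cells from $\id_{\cat B}$ to $\cod$ in $\Cat/\cat B$. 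I would also record the key bookkeeping fact that a $2$-cell between such $1$-cells is exactly a \emph{vertical} natural transformation, i.e.\ one all of whose components lie over identities of $\cat B$.

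Next I would exhibit the candidate data. The inclusion (forgetful) functor $U_\lin \colon \Lin(T) \to \cat B$ sends a linear object $(B,t)$ to $B$, and the trivialisations assemble into a family $\theta_{(B,t)} := t \colon T(U_\lin(B,t)) \cong (U_\lin(B,t))^2$. Verticality of $\theta$ is precisely the requirement that each $t$ be a vertical isomorphism as in~\eqref{eq:lin-obj}; naturality of $\theta$ is precisely the commuting square~\eqref{eq:lin-map} defining a $T$-linear map; and each component being invertible makes $\theta$ an isomorphism. Thus $(U_\lin, \theta)$ is a $1$-cell equipped with a vertical natural isomorphism $T\,U_\lin \cong (-)^2\,U_\lin$ in $\Cat/\cat B$.

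It then remains to prove universality. Given any object $W \colon \cat C \to \cat B$ of $\Cat/\cat B$ together with a vertical natural isomorphism $\sigma \colon T\,W \cong (-)^2\,W$, I would define $\bar W \colon \cat C \to \Lin(T)$ by $\bar W(c) = (Wc, \sigma_c)$ on objects and $\bar W(\phi) = W\phi$ on morphisms. Well-definedness is immediate: $\sigma_c$ is a vertical iso, hence a trivialisation, and naturality of $\sigma$ at $\phi$ is exactly the $T$-linearity square, so $W\phi$ is a $T$-linear map. By construction $U_\lin \circ \bar W = W$ and $\theta \cdot \bar W = \sigma$. For uniqueness, any factorisation must send $c$ to a linear object lying over $Wc$, and the condition $\theta \cdot \bar W = \sigma$ forces its trivialisation to be $\sigma_c$; on morphisms there is no further freedom, since a $T$-linear map carries no data beyond its underlying $\cat B$-morphism. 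This gives the unique $\bar W$ and completes the universal property.

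The proof is essentially bookkeeping; the only real care is the $2$-categorical setup. The point to get right is that the relevant notion of ``naturally isomorphic'' is isomorphism \emph{in} $\Cat/\cat B$, i.e.\ via a vertical natural isomorphism, so that the three clauses defining $\Lin(T)$ --- $t$ vertical, $t$ invertible, and the linearity square --- line up respectively with verticality, invertibility, and naturality of $\theta$. I would also note that the additive structure carried by $T$ plays no role here: only that $T$ is a section of $\cod$ is used, which is exactly why the lemma can be stated at this level of generality.
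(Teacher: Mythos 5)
Your proposal is correct and follows essentially the same route as the paper: both identify the universal property as that of the isoinserter of the parallel pair $T, (-)^2$ in $\Cat/\cat B$, and both check that its objects are vertical isomorphisms $p_B \cong \pi_1$ (i.e.\ trivialisations) and its morphisms are exactly the maps satisfying the $T$-linearity square. You spell out the existence and uniqueness of the factoring functor more explicitly than the paper does, but the underlying argument is identical.
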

\begin{proof}
    The universal property of $U_\lin$ is that of an \emph{isoinserter}~\cite[p.~308]{kelly_elementary_1989}.
    An object of the isoinserter $\Lin(T)$ is an object $B$ of $\cat B$ together with an isomorphism $p_B \cong \pi_1$ which is $\cod$-vertical, hence exactly a trivialization of $p_B$.
    Likewise, a map in $\Lin(T)$ is a map that commutes with the trivializations:
    \begin{equation}
        \begin{tikzcd}[ampersand replacement=\&,sep=scriptsize]
            TB \&\&\& {TB'} \\
            \& B \& {B'} \\
            {B \times B} \&\&\& {B' \times B'}
            \arrow["Tf", from=1-1, to=1-4]
            \arrow[" t ", from=1-4, to=3-4]
            \arrow[" t "', from=1-1, to=3-1]
            \arrow["{f \times f}"', from=3-1, to=3-4]
            \arrow["f"{pos=0.6}, from=2-2, to=2-3]
            \arrow["{p_{B'}}"{description}, from=1-4, to=2-3]
            \arrow["{\pi_1}"{description}, from=3-4, to=2-3]
            \arrow["{\pi_1}"{description}, from=3-1, to=2-2]
            \arrow["{p_B}"{description}, from=1-1, to=2-2]
        \end{tikzcd}
    \end{equation}
    The commutativity of the above hinges on the commutativity of the outer square, since all the other subdiagrams commute for various reasons.
    But the outer diagram commutes iff $f$ is $T$-linear~(cf.~\eqref{eq:lin-map}).
\end{proof}

Notice there can be many trivializations for the same object.
For instance, in the case $\cat B = \Smooth$, $(T,p) = (T, \pi)$,  the tangent space of a cartesian space such as $\R^n$ is isomorphic to $\pi_1:{\R^n \times \R^n \to \R^n}$ but clearly any linear diffeomorphism of the latter can be used to get a different trivialization.

\begin{lemma}
\label[lemma]{lemma:lin-of-maps}
    Suppose $f:B \to B'$ is a map in $\cat B$.
    Then $f$ is $T$-linear iff the following commutes:
    \begin{equation}
    \label{eq:f-comm-with-snd-proj}
        \begin{tikzcd}[ampersand replacement=\&,sep=scriptsize]
            TB \& {TB'} \\
            {B \times B} \& {B' \times B'} \\
            B \& {B'}
            \arrow["Tf", from=1-1, to=1-2]
            \arrow[" t ", from=1-2, to=2-2]
            \arrow[" t "', from=1-1, to=2-1]
            \arrow["{\pi_2}"', from=2-1, to=3-1]
            \arrow["{\pi_2}", from=2-2, to=3-2]
            \arrow["f"', from=3-1, to=3-2]
        \end{tikzcd}
    \end{equation}
\end{lemma}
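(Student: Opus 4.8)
The plan is to use the universal property of the binary product $B'\times B'$: the two maps $TB\to B'\times B'$ whose equality expresses $T$-linearity of $f$, namely $t\circ Tf$ and $(f\times f)\circ t$ from \eqref{eq:lin-map}, coincide exactly when they agree after both projections $\pi_1$ and $\pi_2$. The whole content of the lemma is that agreement after $\pi_1$ is automatic, so that only the $\pi_2$-component carries information — and that component is precisely the commutativity of \eqref{eq:f-comm-with-snd-proj}.

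First I would record the three structural equations that hold for free. Since the trivialization $t$ is $\cod$-vertical, it satisfies $\pi_1\circ t=p_B$ (at $B$) and $\pi_1\circ t=p_{B'}$ (at $B'$). Since $T$ is a section of the codomain functor, $Tf$ is a morphism of $\cat B^{\downarrow}$ from $p_B$ to $p_{B'}$ lying over $f$, which unpacks to the commuting square $p_{B'}\circ Tf=f\circ p_B$. Finally, naturality of the first projection gives $\pi_1\circ(f\times f)=f\circ\pi_1$.

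The core step is then a one-line diagram chase showing the $\pi_1$-components always agree, with no hypothesis on $f$:
\[
\pi_1\circ t\circ Tf \;=\; p_{B'}\circ Tf \;=\; f\circ p_B \;=\; f\circ\pi_1\circ t \;=\; \pi_1\circ(f\times f)\circ t .
\]
Consequently $f$ is $T$-linear, i.e.\ $t\circ Tf=(f\times f)\circ t$, if and only if these two maps also agree after $\pi_2$. But agreement after $\pi_2$ is exactly commutativity of the outer boundary of \eqref{eq:f-comm-with-snd-proj}, whose lower square is the naturality identity $\pi_2\circ(f\times f)=f\circ\pi_2$ and so commutes automatically; hence the full diagram \eqref{eq:f-comm-with-snd-proj} commutes iff its top square \eqref{eq:lin-map} does. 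This yields the claimed equivalence.

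I do not anticipate a genuine obstacle here: the argument is forced once the definitions are translated into equations between maps of $\cat B$. The only point requiring care is the bookkeeping of what ``vertical isomorphism'' and ``section of the codomain functor'' mean concretely — namely the identities $\pi_1\circ t=p_B$ and $p_{B'}\circ Tf=f\circ p_B$ — after which the proof reduces to the displayed computation and the observation that maps into a product are determined by their projections.
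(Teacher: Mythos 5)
Your proof is correct and follows essentially the same route as the paper: both arguments reduce $T$-linearity to equality of two maps into the product $B'\times B'$, observe that the $\pi_1$-components agree automatically because $\pi_1\circ t=p$ and the resulting square is exactly the square $T$ assigns to $f$, and conclude that only the $\pi_2$-component (the diagram in the statement) carries content. Your write-up is just a more explicit rendering of the paper's one-line remark.
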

\begin{proof}
    Both directions are direct applications of the universal property of the cartesian product, once we notice that the apparently missing diagram:
    \begin{equation}
        \begin{tikzcd}[ampersand replacement=\&,sep=small]
            TB \& {TB'} \\
            {B \times B} \& {B' \times B'} \\
            B \& {B'}
            \arrow["Tf", from=1-1, to=1-2]
            \arrow[" t ", from=1-2, to=2-2]
            \arrow[" t "', from=1-1, to=2-1]
            \arrow["{\pi_1}"', from=2-1, to=3-1]
            \arrow["{\pi_1}", from=2-2, to=3-2]
            \arrow["f"', from=3-1, to=3-2]
        \end{tikzcd}
    \end{equation}
    commutes automatically, since the vertical sides are equal to $p_B$, and thus this diagram is the commutative square $T$ maps $f$ to.
\end{proof}

\begin{lemma}
\label[lemma]{lemma:lin-prod}
    If $T$ is product-preserving, then $\Lin(T)$ has products and $U_\lin:\Lin(T) \to \cat B$ is product-creating.
\end{lemma}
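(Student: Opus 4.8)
The plan is to construct products in $\Lin(T)$ directly, by equipping the product $B \times C$ in $\cat B$ of two $T$-linear objects with a canonical trivialization, and then to verify both halves of product-creation (preservation and reflection) through a single lifting lemma. The terminal object will be handled separately and more easily.

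First I would build the trivialization on $B \times C$. Since $T$ preserves products, the bundle $p_{B\times C}$ is canonically isomorphic, in $\cat B^\downarrow$, to the product bundle $p_B \times p_C : TB \times TC \to B \times C$. Composing this isomorphism with $t_B \times t_C$ and with the evident shuffle isomorphism $\sigma : (B \times B)\times(C\times C) \cong (B\times C)\times(B\times C)$, which is vertical over $B \times C$ and identifies the product bundle $\pi_1\times\pi_1$ with $\pi_1$, yields a vertical isomorphism $t_{B\times C} : p_{B\times C} \cong \pi_1$, so that $(B\times C, t_{B\times C})$ is a $T$-linear object. A short diagram chase, using that $(\pi_B \times \pi_B)\circ\sigma$ is the projection $(B\times B)\times(C\times C) \to B\times B$ and that projections compose with the canonical iso $T(B\times C)\cong TB\times TC$ to give $T\pi_B$, shows that $\pi_B$ and $\pi_C$ are $T$-linear.

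The crux is the following lifting lemma: if $(P, t_P)$ is a $T$-linear object and $\pi : P \to B$, $\pi' : P \to C$ are $T$-linear maps whose underlying cone is a product cone in $\cat B$, then for every $T$-linear $(A, t_A)$ and $T$-linear maps $f : A \to B$, $g : A \to C$, the unique mediating map $h : A \to P$ in $\cat B$ is again $T$-linear. To see this, I would compose the two candidate maps $t_P \circ Th$ and $(h \times h)\circ t_A$ (both $TA \to P \times P$) with $\pi \times \pi$ and with $\pi' \times \pi'$. Using linearity of $\pi$ and of $f$, both composites with $\pi\times\pi$ reduce to $t_B \circ Tf = (f\times f)\circ t_A$ by \eqref{eq:lin-map}, and symmetrically for $\pi'$ and $g$. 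Since the underlying cone is a product, $\langle \pi\times\pi, \pi'\times\pi'\rangle$ is (up to the identification $P\cong B\times C$) the shuffle isomorphism $P\times P \cong (B\times B)\times(C\times C)$, hence jointly monic, so the two candidates coincide and $h$ is $T$-linear; uniqueness of $h$ in $\Lin(T)$ follows from faithfulness of $U_\lin$.

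With the lemma in hand the rest is bookkeeping. Applying it with $P = B\times C$ and the trivialization built above shows $(B\times C, t_{B\times C})$ with projections $\pi_B, \pi_C$ is a product in $\Lin(T)$, which $U_\lin$ sends to the product cone in $\cat B$, giving preservation; applying it to an arbitrary $(P, t_P)$ whose underlying cone is a product gives reflection of binary products. The terminal object is immediate: $T$ preserves the terminal object, so $1$ carries a trivialization, and any map into $1$ is automatically $T$-linear because $1\times 1$ is terminal, yielding both that $1$ is terminal in $\Lin(T)$ and that $U_\lin$ reflects terminality. Together these establish that $\Lin(T)$ has finite products and that $U_\lin$ is product-creating. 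Conceptually, this is an instance of the fact that the forgetful functor of an isoinserter (Lemma~\ref{lemma:cat-of-lin-objs}) creates any limits preserved by both of its defining $1$-cells, here $T$ and $(-)^2$; the only genuinely delicate point is the lifting lemma, where the essential input is the joint monicity coming from the shuffle isomorphism $P\times P \cong (B\times B)\times(C\times C)$, which is exactly what lets linearity of the mediating map be tested factorwise.
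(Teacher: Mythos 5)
Your proof takes essentially the same route as the paper's: equip $B \times C$ with the trivialization obtained by composing the product-preservation isomorphism $T(B\times C)\cong TB\times TC$ with $t_B\times t_C$ and the shuffle, and check that the projections are $T$-linear. The paper dismisses the universal-property verification as trivial where you spell it out via the lifting lemma and joint monicity of the shuffle (and you also treat the terminal object explicitly); this is a correct filling-in of the same argument rather than a different approach.
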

\begin{proof}
    Let $B,B':\Lin(T)$, then we have
    \begin{equation}
        T(B \times B') \cong TB \times TB' \cong (B \times B) \times (B' \times B') \cong (B \times B') \times (B \times B'),
    \end{equation}
    where all the isomorphisms are intended to be over $B \times B'$.
    Trivially, $U_\lin:\Lin(T) \to \cat B$ preserves products in this way.
    Product-reflection comes from the fact that product projections are $T$-linear, as can be evinced from the following:
    \begin{equation}
        \begin{tikzcd}[ampersand replacement=\&,row sep=scriptsize]
            {T(B \times B')} \& TB \\
            {TB \times TB'} \& TB \\
            {(B \times B') \times (B \times B')} \& {B \times B}
            \arrow["{T\pi_B}", from=1-1, to=1-2]
            \arrow["{\pi_B \times \pi_B}", from=3-1, to=3-2]
            \arrow["{ t  \times  t }"', from=2-1, to=3-1]
            \arrow["\cong"', from=1-1, to=2-1]
            \arrow[" t ", from=2-2, to=3-2]
            \arrow["\pi_{TB}", from=2-1, to=2-2]
            \arrow[Rightarrow, no head, from=1-2, to=2-2]
        \end{tikzcd}
    \end{equation}
\end{proof}

\subsection{Linearity in FODS}
We can instantiate~\cref{def:linearity} for any first-order cartesian tangent structure $\FOTS$, meaning $\cat B$ is a category with bundles and finite products while $T$ is a product-preserving section of the fibration of additive bundles of $\cat B$.

\begin{definition}
    We say an object in $\cat B$ is \textbf{$T$-linear} iff it is linear in the sense of~\cref{def:linearity} with respect to the section $T \colon \cat B \to \cat B^\downarrow$.
\end{definition}

We keep denoting the resulting category of $T$-linear objects by $\Lin(T)$.

By definition, on $T$-linear objects, $T$ behaves as a first-order CDC (\cref{th:FODS-CDCs}), sending an object $B$ to the projection $\pi_1 \colon B \times B \to B$.
This fact will be fully expounded later on, where we will deal with building an actual CLA category on which a first-order CDC can be defined.
For now, we limit ourselves to observing the following:

\begin{lemma}
\label[lemma]{lemma:FODS-is-CDC-on-lin-objs}
    There is a commutative square of fibrations and sections:
    \begin{equation}
        \begin{tikzcd}[ampersand replacement=\&]
            {S^+(\Lin(T))} \& {S^+(\cat B)} \& {{\cat B^\downbundle}} \\
            {\Lin(T)} \&\& {\cat B}
            \arrow["\cod", from=1-3, to=2-3]
            \arrow["\simple", from=1-1, to=2-1]
            \arrow["{U_\lin}", from=2-1, to=2-3]
            \arrow["{{(-)^2}}", curve={height=-6pt}, dashed, from=2-1, to=1-1]
            \arrow["T", curve={height=-6pt}, dashed, from=2-3, to=1-3]
            \arrow["{{S^+(U_\lin)}}", from=1-1, to=1-2]
            \arrow[from=1-2, to=1-3]
        \end{tikzcd}
    \end{equation}
    where the arrow stands for the inclusion of $S^+(\cat B)$ into $\cat B^{\downbundle+}$.
\end{lemma}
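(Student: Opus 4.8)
The plan is to read the diagram as two superimposed pieces of data: a commuting square of the two fibrations $\simple^+_{\Lin(T)}$ and $\cod^+_{\cat B}$ joined by the horizontal fibred functors $S^+(U_\lin)$ and the inclusion $S^+(\cat B)\hookrightarrow\cat B^{\downbundle+}$, together with the two sections $(-)^2$ and $T$ sitting over $\Lin(T)$ and $\cat B$. Three corners and two of the arrows are already in hand: $T$ is the given product-preserving section of the additive bundle fibration, and the top-right arrow is the inclusion described in the statement (product projections are bundles, so the simple fibration embeds into the bundle fibration, and this embedding passes to commutative monoids fibrewise). So the work is to produce the left-hand section $(-)^2\colon\Lin(T)\to S^+(\Lin(T))$, to produce $S^+(U_\lin)$, and to check that the square commutes.

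The only genuinely new ingredient is the commutative-monoid structure that turns the squaring bundle $\pi_1\colon B\times B\to B$ into an object of $S^+(\Lin(T))$, and this is exactly what linearity supplies. For a $T$-linear object $B$ the bundle $p_B\colon TB\to B$ is an additive bundle, since $T$ lands in $\cat B^{\downbundle+}$, and the trivialization $t\colon p_B\cong\pi_1$ transports its monoid structure onto $\pi_1$, making ${B\choose B}$ an object of $S^+(\Lin(T))$. On a $T$-linear map $f\colon B\to B'$, the defining square~\eqref{eq:lin-map} identifies $Tf$ with $f\times f$ through the trivializations; since $Tf$ is a morphism of additive bundles, $f\times f$ is a homomorphism for the transported monoid structures, so ${f\choose f}$ is a legitimate morphism of $S^+(\Lin(T))$. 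That $(-)^2$ is a section is immediate, and it preserves products by~\cref{lemma:lin-prod}. For $S^+(U_\lin)$, the functor $U_\lin$ is product-preserving—indeed product-creating by~\cref{lemma:lin-prod}—so the simple-fibration and commutative-monoid constructions are functorial on it as in~\cref{lemma:cofree-semiadd}, yielding a fibred functor over $U_\lin$; composing with the inclusion gives the top edge, and the solid square of fibrations commutes because both horizontal functors lie over their base functors.

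Finally I would compare the two composites $\Lin(T)\to\cat B^{\downbundle+}$. On an object $B$ the path through $T$ returns the additive bundle $p_B\colon TB\to B$, while the path through $(-)^2$ returns $\pi_1\colon B\times B\to B$ with the transported monoid structure; by construction these are identified by the vertical iso $t$, which is an isomorphism of \emph{additive} bundles precisely because the target monoid structure was defined by transport along it. On a morphism $f$, commutativity of the resulting naturality square is again~\eqref{eq:lin-map}. Thus the square commutes up to the canonical natural isomorphism carried by the trivializations—i.e.\ the $2$-cell exhibiting $\Lin(T)$ as an isoinserter in~\cref{lemma:cat-of-lin-objs}—which is the intended sense in which a square of sections of fibrations commutes. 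The main obstacle is bookkeeping rather than construction: one must take care that this comparison is strictly an isomorphism of additive bundles, i.e.\ that the two monoid structures agree; but since the monoid structure on $\pi_1$ is \emph{defined} by transporting along $t$, this compatibility is forced, and the only remaining content is naturality, which is delivered by the linear-map condition.
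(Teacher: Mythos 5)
Your proposal is correct and follows essentially the same route as the paper, whose proof is simply to cite \cref{lemma:cat-of-lin-objs} (the isoinserter characterisation, supplying the trivialization $2$-cell that makes the two composite sections agree) and \cref{lemma:lin-prod} (so that $U_\lin$ creates products and $S^+(U_\lin)$ is defined); you have merely unpacked what ``direct corollary'' means, including the transport of the additive-bundle structure along $t$ and the observation that the section square commutes only up to the canonical natural isomorphism carried by the trivializations. Nothing in your argument deviates from or goes beyond what the paper's citations are meant to deliver.
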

\begin{proof}
    Direct corollary of~\cref{lemma:cat-of-lin-objs} and~\cref{lemma:lin-prod}.
\end{proof}


\subsection{Linearity and additivity}
In this section, we keep fixed the cartesian FODS $\FOTS$ just introduced.
As anticipated, our notion of $T$-linear objects and maps generalises that of differential object in the tangent categories literature. We will show that differential objects in the sense of~\cite[Definition~3.1]{cockett_differential_2016} are basically $T$-linear commutative monoids.
The only difference between the latter and the first is one compatibility axiom that we cannot yet state---it would require `higher-order' technology, which we don't deal with in this work.
We first give a classical definition, which differes from~\cite[Definition~3.1]{cockett_differential_2016} only in the aforementioned extra axiom:

\begin{definition}
\label[definition]{def:first-order-diff-objs}
    A \textbf{first-order $T$-differential object} is an object $B:\cat B$ together with
    \begin{enumerate}
        \item a map $\hat p_B : TB \to B$ making $B \nfrom{p_B} TB \nto{\hat p_B} B$ into a product cone and
        \item a commutative monoid structure $(0_B, +_B)$,
    \end{enumerate}
    such that the following commute (we suppressed subscripts where this does not create ambiguity):
    \begin{equation}
    \label{eq:diff-obj-axioms-tau-add}
        \begin{tikzcd}[ampersand replacement=\&, sep=scriptsize]
            B \& TB \\
            1 \& B
            \arrow["{!}"', from=1-1, to=2-1]
            \arrow["0_B", from=2-1, to=2-2]
            \arrow["{\hat p}", from=1-2, to=2-2]
            \arrow["{0_{TB}}", from=1-1, to=1-2]
        \end{tikzcd}
        \quad
        \begin{tikzcd}[ampersand replacement=\&, sep=scriptsize]
            {TB \oplus_B TB} \& TB \\
            {B \times B} \& B
            \arrow["{\langle \pi_1\hat p, \pi_2\hat p \rangle}"', from=1-1, to=2-1]
            \arrow["{+_B}", from=2-1, to=2-2]
            \arrow["{\hat p}", from=1-2, to=2-2]
            \arrow["{+_{TB}}", from=1-1, to=1-2]
        \end{tikzcd}
    \end{equation}
    \begin{equation}
    \label{eq:diff-obj-axioms-cmon-lin}
        \begin{tikzcd}[ampersand replacement=\&,sep=scriptsize]
            T1 \& TB \\
            1 \& B
            \arrow["\cong"', from=1-1, to=2-1]
            \arrow["{0_B}", from=2-1, to=2-2]
            \arrow["{\hat p}", from=1-2, to=2-2]
            \arrow["{T(0_B)}", from=1-1, to=1-2]
        \end{tikzcd}
        \quad
        \begin{tikzcd}[ampersand replacement=\&,sep=scriptsize]
            {T(B \times B)} \& TB \\
            {TB \times TB} \\
            {B \times B} \& B
            \arrow["{T(+_B)}", from=1-1, to=1-2]
            \arrow["{+_B}", from=3-1, to=3-2]
            \arrow["{\hat p}", from=1-2, to=3-2]
            \arrow["\cong"', from=1-1, to=2-1]
            \arrow["{\hat p \times \hat p}"', from=2-1, to=3-1]
        \end{tikzcd}
    \end{equation}
\end{definition}


In the above, $+_{TB}$ and $0_{TB}$ refer to the commutative monoid structure of $p_B:TB \to B$ in the fiber of $\cat B^{\downbundle+}$ over $B$, while $T1 \cong 1$ and $T(B \times B) \cong TB\times TB$ come from the product-preserving assumption on $T$.

\begin{remark}
    The first diagram of~\eqref{eq:diff-obj-axioms-cmon-lin} is actually redundant, as shown in~\cite[Lemma~4.9]{cockett_differential_2014}.
    We include it for clarity.
\end{remark}

Let's try to understand what's going on with~\cref{def:first-order-diff-objs}, from our perspective.
Define $T^+$ as the following change of base for $T$:
\begin{equation}
    \begin{tikzcd}[ampersand replacement=\&, column sep=scriptsize]
        {U^*\cat B^{\downbundle+}} \& {\cat B^{\downbundle+}} \\
        {\CMon(\cat B)} \& {\cat B}
        \arrow[""{name=0, anchor=center, inner sep=0}, "U"', from=2-1, to=2-2]
        \arrow["\cod"', from=1-2, to=2-2]
        \arrow[from=1-1, to=1-2]
        \arrow[from=1-1, to=2-1]
        \arrow["T"', curve={height=6pt}, dashed, from=2-2, to=1-2]
        \arrow["{T^+}", curve={height=-6pt}, dashed, from=2-1, to=1-1]
        \arrow["\lrcorner"{anchor=center, pos=0.125}, draw=none, from=1-1, to=0]
    \end{tikzcd}
\end{equation}
The section $T^+$ doesn't differ much from $T$: it operates by first forgetting the commutative monoid structure of its argument and then proceeding as $T$.
Thus a $T^+$-linear object is a commutative monoid $(0_B, +_B)$ with an additive trivialization $ t $.

In particular, $ t $ identifies the additive bundle $(p_B, 0_{TB}, +_{TB})$ (a commutative monoid in $\cat B^{\bundle}_B$) with $(\pi_1, 0_{B \times B}, +_{B \times B})$, where the latter has been obtained by lifting the commutative monoid structure of $B$ as follows:
\begin{equation}
    0_{B \times B}\ \dfrac{b:B \vdash 1 : 1}{b:B \vdash 0_B :B},
    \quad
    +_{B \times B}\ \dfrac{b:B \vdash (b_1, b_2) : B \times B}{b:B \vdash b_1 +_B b_2 :B}.
\end{equation}
Without resorting to elements, we can characterise $0_{B \times B}$ and $+_{B \times B}$ by saying the following commute:
\begin{equation}
\label{eq:tplus-lin-obj}
    \begin{tikzcd}[ampersand replacement=\&,sep=scriptsize]
        B \& {B \times B} \\
        1 \& B
        \arrow["{0_B}", from=2-1, to=2-2]
        \arrow["{\pi_2}", from=1-2, to=2-2]
        \arrow["{!_B}"', from=1-1, to=2-1]
        \arrow["{0_{B \times B}}", from=1-1, to=1-2]
    \end{tikzcd}
    \quad
    \begin{tikzcd}[ampersand replacement=\&,sep=scriptsize]
        {(B \times B) \oplus_B (B \times B)} \& {B \times B} \\
        {B \times B} \& B
        \arrow["{0_B}", from=2-1, to=2-2]
        \arrow["{\pi_2}", from=1-2, to=2-2]
        \arrow["{+_{B \times B}}", from=1-1, to=1-2]
        \arrow["{\langle \pi_1 \pi_2, \pi_2\pi_2 \rangle}"', from=1-1, to=2-1]
    \end{tikzcd}
\end{equation}
The reader might recognise in these the two diagrams of~\eqref{eq:diff-obj-axioms-tau-add}: in effect, those are saying that $p_B$ is equipped with an additive structure that doesn't use its first argument (i.e.~it's constant over $B$).

What about the second set of axioms?
Without squinting too hard (aided by~\cref{lemma:lin-of-maps}), we see it asserts that the commutative monoid structure of $B$ is $T$-linear.\footnote{The fact $0_B$ is $T$-linear follows from the first axiom, as already observed in~\cite[Lemma~4.9]{cockett_differential_2014}.}
However, this is not neccessarily the case for the objects of $\Lin(T^+)$!

On the other hand, consider $\CMon(\Lin(T))$, i.e.~$T$-linear commutative monoids.
An object here is a $T$-linear object $(B,  t )$ equipped with a commutative monoid structure $(0_B, +_{B \times B})$, where these maps are themselves $T$-linear.
Thus, by definition, these satisfy~\eqref{eq:diff-obj-axioms-cmon-lin}.
However, the trivialization $ t $ isn't additive.
In fact, the additive bundle structure on $\pi_1 : B \times B \to B$ isn't even defined at this stage!

So we are facing a chicken-and-egg problem here: to define additivity of $ t $, we need to have a commutative monoid structure on $B$, but to say this structure is linear, we need to have a trivialization.

To solve this problem and characterise first-order $T$-differential object, we can thus resort to taking a (strict 2-)limit:
\begin{equation}
\label{eq:diff-fst-pb}
    \begin{tikzcd}[ampersand replacement=\&,sep=scriptsize]
        \& {\Diff(T)} \\
        {\Lin(T^+)} \&\& {\CMon(\Lin(T))} \\
        {\Lin(T)} \&\& {\CMon(\cat B)} \\
        \& {\cat B}
        \arrow["{U_{\lin}}", from=2-3, to=3-3]
        \arrow["{U_\lin}"{description, pos=0.73}, curve={height=-6pt}, from=2-1, to=3-3]
        \arrow["{U_\add}"{description, pos=0.7}, curve={height=6pt}, from=2-3, to=3-1]
        \arrow["{U_\add}"', from=2-1, to=3-1]
        \arrow[dashed, from=1-2, to=2-1]
        \arrow[dashed, from=1-2, to=2-3]
        \arrow["{U_\lin}"', from=3-1, to=4-2]
        \arrow["{U_\add}", from=3-3, to=4-2]
    \end{tikzcd}
\end{equation}
Here the functors $U_\lin$ and $U_\add$ forget linear and additive structure respectively.

In this way, an object $B$ of the suggestively-named $\Diff(T)$ is equipped with a commutative monoid structure $(0_B,+_B)$ and a trivialization $ t $ such that the second is additive with respect to the additive bundle structure on $\pi_1:B \times B \to B$ induced by the first, and the first is $T$-linear with respect to latter trivialization.

To justify the name given to $\Diff(T)$, we prove the following:

\begin{therm}
\label[theorem]{th:cmon-lin-obj-is-diff-obj}
    The following are equivalent:
    \begin{enumerate}
        \item objects of $\Diff(T)$,
        \item first-order $T$-differential objects.
    \end{enumerate}
\end{therm}

We'll deal with maps just after the proof.

\begin{proof}
    Assume $(B, 0_B, +_B,  t ): \Diff(T)$.
    The commutative monoid structure on $B$ is already given, while the map $\hat p_B$ can be defined as $TB \nlongto{ t } B \times B \nlongto{\pi_2} B$.
    It is immediate to see $B \nlongfrom{p_B} TB \nlongto{\hat p_B} B$ is a product cone, since $ t $ gives an isomorphism with $B \nlongfrom{\pi_1} B \times B \nlongto{\pi_2} B$.

    Coming to the axioms listed in~\cref{def:first-order-diff-objs}, as observed above the first two are satisfied since the commutative monoid structure of $\pi_1:B \times B \to B$ satisfies~\eqref{eq:tplus-lin-obj} and $ t $, being an additive bundle isomorphism, allows to conclude the same for $p_B$.

    Conversely, assume $B: \cat B$ is a first-order $T$-differential object.
    First, from the fact $B \nfrom{p_B} TB \nto{\hat p_B} B$ is a product cone, we get a trivialization of $TB$ by setting $ t  := \langle p_B, \hat p_B \rangle$.
    We can show $ t $ is additive by using~\eqref{eq:diff-obj-axioms-tau-add}.
    We exemplify the argument by proving that $ t $ commutes with $+$, the proof for $0$ is analogous.

    Indeed, consider (were we suppressed subscripts for clarity):
    \begin{equation}
        \begin{tikzcd}[ampersand replacement=\&,sep=scriptsize]
            {TB \oplus_B TB} \& TB \\
            {(B \times B) \oplus_B (B \times B)} \\
            {B \times (B \times B)} \& {B \times B}
            \arrow[Rightarrow, no head, from=3-1, to=2-1]
            \arrow["{\langle p, \hat p \rangle \oplus_B \langle p, \hat p \rangle}"', from=1-1, to=2-1]
            \arrow["{\langle p, \hat p \rangle}", from=1-2, to=3-2]
            \arrow["{+}", from=1-1, to=1-2]
            \arrow["{+}"', from=3-1, to=3-2]
        \end{tikzcd}
    \end{equation}
    By universal property of the products, commutativity of the diagram above is equivalent to the commutativity of the two diagrams below:
    \begin{equation}
        \begin{tikzcd}[ampersand replacement=\&,sep=scriptsize]
            {TB \oplus_B TB} \& TB \\
            B \& B
            \arrow["p", from=1-2, to=2-2]
            \arrow["{+_{TB}}", from=1-1, to=1-2]
            \arrow[Rightarrow, no head, from=2-1, to=2-2]
            \arrow["{p \oplus_B p}"', from=1-1, to=2-1]
        \end{tikzcd}
        \quad
        \begin{tikzcd}[ampersand replacement=\&,sep=scriptsize]
            {TB \oplus_B TB} \& TB \\
            {B \times B} \& B
            \arrow["{\hat p}", from=1-2, to=2-2]
            \arrow["{+}", from=1-1, to=1-2]
            \arrow["{+_B}", from=2-1, to=2-2]
            \arrow["{\langle \pi_1 \hat p, \pi_2 \hat p \rangle}"', from=1-1, to=2-1]
        \end{tikzcd}
    \end{equation}
    But the first commutes because $+_{TB}$ is a vertical morphism of bundles, and the second by assumption of~\cref{eq:diff-obj-axioms-tau-add}.

    It remains to prove the commutative monoid structure on $B$ is $T$-linear, which is a trivial consequence of~\eqref{eq:diff-obj-axioms-cmon-lin} and~\cref{lemma:lin-of-maps}.
\end{proof}

\cite{cockett_differential_2016} define `differential linear maps' between differential objects.
Their maps turn out to be just $T$-linear maps:

\begin{therm}
\label[theorem]{th:lin-map-is-diff-lin-map}
    The following are equivalent:
    \begin{enumerate}
        \item a map $f:B \to B'$ in $\Lin(T)$,
        \item a map $f:B \to B'$ between objects equipped with a $\hat p_B$ as in~\cref{def:first-order-diff-objs} satisfying:
        \begin{equation}
        \label{eq:diff-lin-map}
            \begin{tikzcd}[ampersand replacement=\&,sep=scriptsize]
                TB \& {TB'} \\
                B \& {B'}
                \arrow["Tf", from=1-1, to=1-2]
                \arrow["{\hat p_B}"', from=1-1, to=2-1]
                \arrow["{\hat p_{B'}}", from=1-2, to=2-2]
                \arrow["f", from=2-1, to=2-2]
            \end{tikzcd}
        \end{equation}
    \end{enumerate}
\end{therm}
\begin{proof}
    Clearly a $T$-linear map satisfies (2) in light of the previous lemma and of the characterization of $T$-linearity in~\cref{lemma:lin-of-maps}.

    Conversely, defining $ t _B := \langle p_B, \hat p_B \rangle$ and $ t _{B'}$ analogously, we see $f$ is a map between $T$-linear objects which is $T$-linear by~\cref{lemma:lin-of-maps} again.
\end{proof}

However, if we seek a notion of morphism of differential objects then linearity alone doesn't cut it---we believe the correct maps between first-order $T$-differential objects should preserve also the additive structure.

\begin{definition}
    The \textbf{category of first-order $T$-differential objects} $\Diff^\fst(T)$ is the category defined in~\eqref{eq:diff-fst-pb}, whose objects are first-order $T$-differential objects and whose maps are both linear and additive.
\end{definition}

To lighten notation, in the following section we write $\Diff(T)$ for this latter category.

\subsection{Cofree CDCs from linear objects}
In the context of tangent categories,~\cite[Theorem~4.11]{cockett_differential_2014} proves $T$-differential objects form a CLA category with CDC structure.

Here we prove an analogous result, generalising to FODS with comprehension and finite products.

The resulting CDC structure won't be on $\CMon(\Lin(T))$, but on a larger category with the same objects but whose maps have been relaxed to not be necessarily additive nor linear.
This is, in fact, the reason why in the literature maps of differential objects are often taken to be non-additive and non-linear.

We can motivate the usage of such a `wonky' category (appearing at the very bottom of the diagram below) by recognizing it has a property not unlike that of an image.
In fact, bijective-on-objects and fully faithful functors form a factorization system in $\Cat$~\cite[]{dupont_proper_2003}, yielding the following factorizations:
\begin{equation}
\label{eq:linadd-fact}
    \begin{tikzcd}[ampersand replacement=\&,sep=small]
        {\Diff(T)} \&\& {\CMon(\cat B)} \&\& {\cat B} \\
        \& {\Diff(T)^\nlin} \&\& {\CMon(\cat B)_\nadd} \\
        \&\& {\Diff(T)^\nlin_\nadd}
        \arrow["{U_\lin}", from=1-1, to=1-3]
        \arrow["\bo"', from=1-1, to=2-2]
        \arrow["\ff"', from=2-2, to=1-3]
        \arrow["{U_\add}", from=1-3, to=1-5]
        \arrow["\bo"', from=1-3, to=2-4]
        \arrow["\ff"', from=2-4, to=1-5]
        \arrow["\bo"', "i", from=2-2, to=3-3]
        \arrow["\ff"', from=3-3, to=2-4]
    \end{tikzcd}
\end{equation}
Thus $\Diff(T)_\nadd^\nlin$ is the full image of the composite $U_\lin U_\add$.
Concretely, $\Diff(T)_\nadd^\nlin$ is constructed by taking the objects from $\Diff(T)$ and defining
\begin{equation}
    \Diff(T)_\nadd^\nlin(A,B) = \cat B(U(A), U(B)).
\end{equation}

This way of constructing $\Diff(T)_\nadd^\nlin$ highlights how such a category is bestowed with CLA structure.
In fact, in light of~\cref{lemma:CLA-charac}, this is exactly what the functor $i$ in~\eqref{eq:linadd-fact} amounts to.

\begin{lemma}
    The functor $i:\Diff(T)^\nlin \to \Diff(T)_\nadd^\nlin$, defined in~\eqref{eq:linadd-fact}, is an equipment of additive maps (as defined in~\cref{lemma:CLA-charac}).
\end{lemma}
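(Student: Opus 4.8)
The plan is to verify directly the three conditions packaged into \cref{def:equip-of-add-maps}: that the source $\Diff(T)^\nlin$ is semi-additive, that $i$ is bijective-on-objects, and that $i$ is product-creating in the sense of \cref{def:prod-reflecting}. Bijectivity on objects is immediate, since $i$ is by construction the bijective-on-objects leg $\bo$ of the factorization~\eqref{eq:linadd-fact}. Throughout I will exploit the two fully faithful legs of that factorization: I write $J \colon \Diff(T)^\nlin \to \CMon(\cat B)$ for the $\ff$ functor, and I recall the concrete description $\Diff(T)_\nadd^\nlin(A,B) = \cat B(U(A),U(B))$, where $U$ denotes the forgetful functor to $\cat B$, which is faithful.

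First I record that differential objects are closed under finite products. By \cref{th:cmon-lin-obj-is-diff-obj} these are exactly the objects of $\Diff(T)$, which is the strict $2$-limit~\eqref{eq:diff-fst-pb} of $\Lin(T)$, $\Lin(T^+)$, $\CMon(\Lin(T))$ and $\CMon(\cat B)$. Each of these has finite products --- via \cref{lemma:lin-prod} for the linear vertices (noting $T^+$ is product-preserving, being a change of base of the product-preserving $T$) and via \cref{lemma:cofree-semiadd} for the $\CMon$ vertices --- and the connecting functors preserve them, so $\Diff(T)$ has products created from $\cat B$. Consequently both $\Diff(T)^\nlin$ and $\Diff(T)_\nadd^\nlin$ have finite products, computed on underlying $\cat B$-objects. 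For semi-additivity of $\Diff(T)^\nlin$ I then use that $J$ is fully faithful and that, by the above, the biproduct $JX \oplus JY$ supplied by \cref{lemma:cofree-semiadd} in $\CMon(\cat B)$ is again (the underlying monoid of) a differential object; hence the whole biproduct diagram --- projections, injections, zero and sum --- lifts uniquely along $J$, so $\Diff(T)^\nlin$ inherits finite biproducts and is semi-additive.

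The crux is product-creation of $i$. Preservation is bookkeeping: the product in $\Diff(T)^\nlin$ is the biproduct just described, whose underlying $\cat B$-object is $U(X) \times U(Y)$, and this is precisely the product of $iX,iY$ in $\Diff(T)_\nadd^\nlin$; the additive projections map to the $\cat B$-projections and the zero object maps to the terminal $1$. For reflection I will invoke the standard fact that a commutative monoid homomorphism whose underlying $\cat B$-map is invertible is itself invertible: if $g$ is the $\cat B$-inverse of a homomorphism $f \colon M \to N$, then postcomposing the homomorphism law for $f$ with $g$ and precomposing with $g \times g$ yields that $g$ commutes with the sums, and similarly with the units, so $g$ is a homomorphism. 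Given a cone $X \xleftarrow{\pi_X} P \xrightarrow{\pi_Y} Y$ in $\Diff(T)^\nlin$ whose $i$-image is a product cone, I compare $P$ with the biproduct $X \oplus Y$ through the unique additive $u \colon P \to X \oplus Y$. Since $i$ preserves products, $iu$ is the canonical comparison of two product cones on $iX,iY$, hence an isomorphism in $\Diff(T)_\nadd^\nlin$, i.e.\ an underlying $\cat B$-isomorphism; by the monoid fact $Ju$ is then a $\CMon(\cat B)$-isomorphism, and by fullness of $J$ its inverse lies in $\Diff(T)^\nlin$, so $u$ is an isomorphism there and $P$ is a product. The terminal clause of \cref{def:prod-reflecting} follows from the same argument applied to the unique additive map $X \to 0$.

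The main obstacle is exactly this reflection step: everything rests on lifting a comparison isomorphism from $\cat B$ back up to additive maps, which is supplied by the monoid-inverse fact together with the fullness of $J$. Once this is in place, the three clauses assemble to exhibit $i$ as a bijective-on-objects, product-creating functor out of a semi-additive category, i.e.\ an equipment of additive maps, matching the CLA structure anticipated through \cref{lemma:CLA-charac}.
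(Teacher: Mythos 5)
Your proof is correct and follows essentially the same route as the paper's: bijectivity on objects from the factorization, product-preservation by exhibiting the biproduct explicitly, and product-reflection; you are considerably more careful than the paper on the reflection step (the paper only remarks that ``projections are always additive'', whereas you give the comparison-isomorphism argument) and on checking that $\Diff(T)^\nlin$ is actually semi-additive. The only item in the paper's proof you do not address is the full faithfulness of the induced choice-of-monoids functor $M:\Diff(T)^\nlin \to \CMon(\Diff(T)^\nlin_\nadd)$, i.e.\ saturation of the equipment --- not literally required by the statement, but needed for the ensuing corollary that $\Diff(T)^\nlin_\nadd$ is a CLA category via \cref{lemma:CLA-charac}.
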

\begin{proof}
    Trivially, $i$ is bijective-on-objects.
    Also, it's easy to convice oneself the induced functor $M:\Diff(T)^\nlin \to \CMon(\Diff(T)_\nadd^\nlin)$ is fully faithful.

    To see $i$ is product-preserving, observe that for $B,B':\Diff(T)^\nlin$:
    \begin{equation}
        i(B \nlongfrom{\pi_B} B \oplus B' \nlongto{\pi_{B'}} B')
        = B \nlongfrom{U_\add(\pi_B)} B \times B' \nlongto{U_\add(\pi_{B'})} B'
        = B \nlongfrom{\pi_{B}} B \times B' \nlongto{\pi_{B'}} B'
    \end{equation}
    To see it is also product-reflecting, observe projections are always additive.
\end{proof}

\begin{corollary}
    $\Diff(T)_\nadd^\nlin$ is a CLA category: a first-order $T$-differential object $B$ is equipped with its very commutative monoid structure, and a map is additive iff it preserves it.
\end{corollary}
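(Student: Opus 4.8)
The plan is to read this off as an immediate consequence of the characterisation of CLA structures by saturated equipments of additive maps, \cref{lemma:CLA-charac}. The substantive work has already been done in the preceding lemma, which exhibits $i\colon \Diff(T)^\nlin \to \Diff(T)_\nadd^\nlin$ as an equipment of additive maps, with the semi-additive category $\Diff(T)^\nlin$ (the full image of $U_\lin$ in the factorisation \eqref{eq:linadd-fact}) serving as its category of additive maps. So the first thing I would do is upgrade this to a \emph{saturated} equipment in the sense of \cref{def:saturated-equip}: by \cref{lemma:choice-of-cmons} the equipment induces a choice-of-monoids functor $M\colon \Diff(T)^\nlin \to \CMon(\Diff(T)_\nadd^\nlin)$, and the proof of the preceding lemma already noted that $M$ is fully faithful. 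Full faithfulness of $M$ is precisely the saturation condition, so \cref{lemma:CLA-charac} applies and transports a CLA structure onto $\Diff(T)_\nadd^\nlin$.

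It then remains only to match the abstract output of \cref{lemma:CLA-charac} with the concrete description in the statement. Under that correspondence the commutative monoid chosen on an object $B$ is its image $M(B)$; unwinding $M$ along \eqref{eq:linadd-fact} identifies $M(B)$ with the commutative monoid structure $(0_B, +_B)$ that $B$ already carries as a first-order $T$-differential object (via \cref{th:cmon-lin-obj-is-diff-obj}). Moreover, because the equipment is saturated, the additive maps of the resulting CLA---by definition the maps in the image of $\Diff(T)^\nlin$---are exactly those maps of $\Diff(T)_\nadd^\nlin$ preserving the chosen monoid, which is the asserted `a map is additive iff it preserves it'.

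There is no genuinely hard step here; the only point deserving care is the confirmation that $M$ is fully faithful, i.e.\ that a monoid homomorphism between the underlying monoids of two first-order $T$-differential objects lifts uniquely to a map of $\Diff(T)^\nlin$. This is what guarantees that additivity in $\Diff(T)_\nadd^\nlin$ can be tested purely at the level of the chosen monoids, and hence that the equipment is saturated rather than merely an equipment of additive maps. Everything else is bookkeeping along the factorisation \eqref{eq:linadd-fact}.
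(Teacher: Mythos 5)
Your proposal is correct and follows exactly the route the paper intends: the corollary is stated without a separate proof precisely because it is the immediate combination of the preceding lemma (which, via the full faithfulness of $M$ noted in its proof, already yields a \emph{saturated} equipment of additive maps) with \cref{lemma:CLA-charac}. Your unwinding of the chosen monoids and of the characterisation of additive maps matches the paper's intended reading.
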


Hence additivity still lives in $\Diff(T)_\nadd^\nlin$ in the form of a CLA structure.
Instead, linearity is restored by having a first-order CDC structure that $T$ induces on it.

\begin{therm}
\label[theorem]{th:cdc-of-lin-objs}
    Let $\FOTS$ be a first-order cartesian tangent structure.
    Then $T$ induces a first-order CDC structure on the CLA category $\Diff(T)_\nadd^\nlin$.
\end{therm}
\begin{proof}
    The first-order CDC is so defined:
    \begin{equation}
        \begin{tikzcd}[ampersand replacement=\&, sep=scriptsize]
            {S_\CLA(\Diff(T)_\nadd^\nlin)} \& {{A \choose A}} \& {{B \choose B}} \\
            \\
            {\Diff(T)_\nadd^\nlin} \& A \& B
            \arrow["{\simple_\CLA}"', from=1-1, to=3-1]
            \arrow["{D_T}"', curve={height=6pt}, dashed, from=3-1, to=1-1]
            \arrow[""{name=0, anchor=center, inner sep=0}, "f"', from=3-2, to=3-3]
            \arrow[""{name=1, anchor=center, inner sep=0}, "f"', shift right=1, from=1-2, to=1-3]
            \arrow["{ t _A^{-1}Tf t _B}", shift left=1, from=1-2, to=1-3]
            \arrow["{D_T}", shorten <=17pt, shorten >=17pt, maps to, from=0, to=1]
        \end{tikzcd}
    \end{equation}
    Given the definition of $\Diff(T)$, it is evident $D_T$ is well-defined.
    To prove it is a first-order CDC in the sense of~\cref{th:FODS-CDCs}, we just need to verify $D_T$ preserves CLA structure.
    Specifically, we have to show that $D_T$ preserves chosen monoids.
    This follows from observing that differential objects are exactly those that satisfy~\cref{def:first-order-diff-objs}, hence for which $ t $ is additive and such that $T$ preserves their additive structure, as required.
\end{proof}

\section{Conclusions}\label{sec:conclusions}

The fibrational perspective is a natural foundation for categorical differential algebra, given that tangent spaces can be seen as a thickened form of the base space, and differential operators attach to each map in the base space a tangent map in the tangent space. This paper has shown how the diverse and fragmented research area of (first-order) models of differentiation can be unified by a simple concept of this framework: a section of a fibration, where each fibre has commutative monoids which are preserved by reindexing. We contend that this structure, albeit simple, is sophisticated enough to capture something essential about differentiation, namely the interplay between linearity and non-linearity, as formalised through the commutative monoid structure. We believe our approach substantiates the thesis that fibrations should be the central defining concept in models of differential structure, around which a full theory can be built.

In terms of future research, the obvious extension of our work is covering higher order structure such as the symmetry of partial derivatives, the linearity of the derivative in its tangent space, and higher order axioms like the Universality of Vertical Lift (UVL) from tangent categories.  Such higher-order structure is necessary to develop various aspects of differential geometry; for example:
\begin{itemize}
	\item the symmetry of partial derivatives (in a tangent category, this is instantiated by a natural transformation $c: T \circ T \to T \circ T$) and UVL are used to define the Lie bracket of two vector fields (see \cite[Section 3.4]{cockett_differential_2014});
	\item UVL is key to understanding and working with differential bundles in a tangent category (the analog of vector bundles) (see \cite[Section 2.2]{cockett_differential_2018});
	\item the symmetry of partial derivatives is used to define the curvature of a connection in a tangent category (see \cite[Definition 3.17]{connections}).
\end{itemize}

Regarding repeated differentiation, a key reason the approach of tangent categories works is because the derivative lives in the same category as the underlying function, and so the derivative operator can be re-applied to a derivative. We believe this can be integrated within our framework via the fibrational concept of comprehension. Roughly speaking, comprehension allows one to create a tangent functor from a section, thus mimicking the tangent functor approach in our section-based approach. Concretely, taking the second derivative will cache out as building a new fibration on top of our old fibration. A section of this new fibration attaches derivatives to maps that are themselves derivatives. If we use the term {\em higher order differential structures} to represent such models, then this means higher order differential structures will consist of fibred first-order differential structures. This once more highlights the centrality of fibrations in modelling the way tangent spaces are indexed over base spaces. The full development of such a perspective is left for a follow up work.


\bibliographystyle{msclike}
\bibliography{refs.bib}

\end{document}